\newtheorem{theorem}{Theorem}[section]
\newtheorem{claim}{Claim}[theorem]
\newtheorem{lemma}[theorem]{Lemma}
\newtheorem{corollary}[theorem]{Corollary}
\newtheorem{hypothesis}[theorem]{Hypothesis}
\newcommand{\bF}{\mathbb F}
\newcommand{\bR}{\mathbb R}
\newcommand{\bT}{\mathbf T}
\newcommand{\cG}{\mathcal{G}}
\newcommand{\cM}{\mathcal{M}}
\newcommand{\nni}{\mathbb{N}_0}
\DeclareMathOperator{\si}{si}
\DeclareMathOperator{\PG}{PG}
\DeclareMathOperator{\DG}{DG}
\DeclareMathOperator{\GF}{GF}
\DeclareMathOperator{\AG}{AG}
\DeclareMathOperator{\col}{col}
\DeclareMathOperator{\row}{row}
\DeclareMathOperator{\rank}{rank}
\newcommand{\elem}{\varepsilon}
\newcommand{\del}{\!\setminus\!}
\newcommand{\con}{/}
\newcommand{\wh}{\widehat}
\newcommand{\templatecrap}{(\Gamma,C,X,Y_0,Y_1,A_1,\Delta,\Lambda)}
\newcommand{\tcol}[1]{\multicolumn{1}{c}{#1}}
\newcommand{\ol}[1]{\overline{#1}}
\newcommand{\sqbinom}[2]{{{#1} \brack {#2}}}
\newcommand{\smallsq}{\scalebox{0.4}{$\square$}}
\DeclareMathOperator{\rowspace}{row}
\newcommand{\sqpinchg}[2]{\cG(#1)^{#2}_{\smallsq}}
\newcommand{\sqpinch}[1]{\cG(\Gamma)^{#1}_{\smallsq}}
\newcommand{\xpinch}[1]{\cG(\Gamma)^{#1}_{(x)}}
\newcommand{\one}[1]{\mathbf{1}_{#1}}
\title[Excluding Geometries]{The extremal function for geometry minors of matroids over prime fields}
\author{Peter Nelson}
\author{Zach Walsh}
\date{November 2021}
\begin{document}

\begin{abstract}
	A frame template over a field $\bF$ describes the precise way in which a given $\bF$-representable matroid is close to being a frame matroid. 
	Our main result determines the maximum-rank projective or affine geometry that is described by a given frame template over a prime field.
	Subject to the matroid minors hypothesis of Geelen, Gerards, and Whittle, we use our result to determine, for each projective or affine geometry $N$ over a prime field $\bF$, a best-possible upper bound on the number of elements in a simple $\bF$-representable matroid $M$ of sufficiently large rank with no $N$-minor.
\end{abstract}

\maketitle

\section{Introduction}
Frame templates were introduced by Geelen, Gerards, and Whittle [\ref{ggwstructure}] as a way to describe the specific structure of classes of matroids representable over a finite field $\bF$.
A frame template describes the precise way in which a given $\bF$-representable matroid is close to being a frame matroid.
For this reason, the definition of frame template is quite technical, and we defer its statement to Section 4.

Our main result, Theorem \ref{templatetech}, states that every frame template over a prime field describes a class of matroids that either contains a large projective geometry or affine geometry, or is degenerate.
Before proving this result, we present a simpler definition of a frame template, and prove in Lemma \ref{equivalent} that it is equivalent to the definition in [\ref{ggwstructure}].
Our definition from an earlier version of this paper has since been further simplified in [\ref{gvzcounterexample}], and applied in [\ref{cgovz}] and [\ref{grace}] to study classes of ternary and quaternary matroids, respectively.

Our motivation for studying frame templates is the matroid minors hypothesis of Geelen, Gerards, and Whittle [\ref{ggwstructure}], whose proof has been announced but not yet written.
Subject to this hypothesis (Hypothesis \ref{structure1}), we show that our main result implies the following theorem.

\begin{theorem}\label{mainsimple}
    Let $t \in \nni$. If $M$ is a simple binary matroid of sufficiently large rank with no $\PG(t+2,2)$-minor, then \[|M| \le 2^t\tbinom{r(M)-t+1}{2} + 2^t-1.\] 
\end{theorem}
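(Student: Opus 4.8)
The plan is to combine the structure theorem for binary matroids with an exact counting argument, organised as an induction on $r = r(M)$ for fixed $t$. A lower bound on the rank is genuinely needed: for $t \ge 2$, the matroid $\PG(t+2,2)$ with one point removed is simple, binary, of rank $t+3$, has no $\PG(t+2,2)$-minor, and exceeds $2^t\binom{4}{2} + 2^t - 1$; so we prove the bound for $r$ above a threshold $r_0(t)$, the base rank being settled via the structural input below and the inductive step passing from $r-1$ to $r$.

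The extremal object is the following rank-$r$ binary matroid $\sT_r$: fix a rank-$t$ flat $F$ carrying a copy of $\PG(t-1,2)$, extend $F$ to a basis with vectors $f_1,\dots,f_{r-t}$, and for each unordered pair $\{i,j\}\subseteq\{0,1,\dots,r-t\}$ (with $f_0 := 0$) and each vector $v$ in the span of $F$, include the point $f_i + f_j + v$. Then $|\sT_r| = 2^t\binom{r-t+1}{2} + 2^t - 1$; contracting $F$ and simplifying turns $\sT_r$ into $M(K_{r-t+1})$; $\sT_r$ conforms to a binary frame template $\templatecrap$ whose template part has rank exactly $t$, and such a template part produces no $\PG(t+2,2)$-minor, whereas a rank-$(t+1)$ template part would already contain $\PG(t+2,2)$; finally, every point of the $M(K_{r-t+1})$-part of $\sT_r$ lies in exactly $2^t(r-t) - 1$ triangles (points of the $\PG(t-1,2)$-part lie in many more).

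The argument has two components. \textbf{Structure.} By the structure theorem for proper minor-closed subclasses of $\mathbb{F}_2$-representable matroids, every simple vertically highly connected binary matroid with no $\PG(t+2,2)$-minor and sufficiently large rank conforms to one of finitely many binary frame templates depending only on $t$; the no-$\PG(t+2,2)$-minor hypothesis forces each such template to have template part of rank at most $t$ (a larger one would yield a $\PG(t+2,2)$-minor in large rank). \textbf{Counting and the inductive step.} For each admissible template one evaluates exactly --- down to the constant term --- the maximum size of a conforming rank-$r$ matroid, and shows the template of $\sT_r$ is the unique maximiser once $r$ is large: the projective template part can contribute only the additive $2^t - 1$, the frame part only $2^t\binom{r-t+1}{2}$, and no hybrid arrangement improves on the sum. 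To extend this from highly connected matroids to all of them, and to absorb the small perturbations permitted by a template, one contracts a single well-chosen element: since $M$ is binary, $|M| - |\si(M/e)| = 1 + \ell(e)$ where $\ell(e)$ is the number of triangles of $M$ through $e$, and $2^t\binom{r-t+1}{2} - 2^t\binom{r-t}{2} = 2^t(r-t)$; the structural description of $M$ yields a point $e$ lying in at most $2^t(r-t) - 1$ triangles (some point sits in a frame-like part of $M$), and $\si(M/e)$ is again simple, binary, of rank $r-1$, with no $\PG(t+2,2)$-minor, so by induction $|M| \le |\si(M/e)| + 2^t(r-t) \le |\sT_{r-1}| + 2^t(r-t) = |\sT_r|$.

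The main obstacle is the exact template analysis: turning the asymptotic, up-to-perturbation structural description into the precise constant requires enumerating the admissible templates, computing each of their extremal functions exactly, and proving the rigidity that the projective and the frame contributions cannot be maximised simultaneously beyond $2^t\binom{r-t+1}{2} + 2^t - 1$ --- the same input that drives the companion characterisation of the matroids attaining equality. The structure theorem is used as a black box, and once a low-triangle element is known to exist the rank induction is routine bookkeeping.
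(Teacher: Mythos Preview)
Your plan has a circularity in the step that extends the bound from highly connected matroids to arbitrary ones. You propose to find, in any simple rank-$r$ binary matroid $M$ with no $\PG(t+2,2)$-minor, an element $e$ lying on at most $2^t(r-t)-1$ triangles, and you justify the existence of $e$ by appealing to ``the structural description of $M$''. But the structure theorem only furnishes a template description for \emph{vertically $k$-connected} matroids of large rank; when $M$ fails that connectivity there is no structural description to invoke, and you offer no alternative mechanism (a separation argument, say) for producing the low-triangle element or otherwise passing the inductive step. The same issue afflicts your base case: the ``structural input'' settles the bound at rank $r_0$ only for highly connected matroids, not for all of them. Without a separate tool for low-connectivity matroids the induction does not close. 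Even in the highly connected case, the template perturbations in $Y_0 \cup Y_1 \cup C$ can add a bounded but nonzero number of extra triangles through every frame element, so the exact bound $2^t(r-t)-1$ on $\ell(e)$ is not immediate and any slack would accumulate linearly over the induction.

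The paper avoids this circularity by \emph{not} inducting on rank. It uses a connectivity-reduction theorem (Theorem~\ref{connreduction}): if the bound $h_{\cM}(n) \le f_{2,1,t}(n)$ failed for infinitely many $n$, there would exist a highly connected counterexample of arbitrarily large rank, to which the structure theorem then applies directly. The template analysis is also organised quite differently from your proposed exact enumeration. Rather than computing the extremal function of every admissible template to the constant term, the paper proves a dichotomy (Lemma~\ref{templatetech}): a reduced template $\Phi$ with $\dim(\Lambda)=t$ and $\Gamma=\{1\}$ either satisfies $\cM(\Phi)\subseteq\cG(\{1\})^t$ outright --- in which case the exact bound $f_{2,1,t}$ is immediate --- or else $\ol{\cM(\Phi)}$ contains the class $\sqpinchg{\{1\}}{t}$, which by Lemma~\ref{techtwo} contains $\PG(t+2,2)$ and hence cannot lie in $\cM$. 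This sidesteps both the per-template density computation and the triangle-counting entirely; the equality characterisation is then recovered via Lemma~\ref{equalityhc}, which shows that extremal matroids of large rank are themselves highly connected.
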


We also show that this bound is best-possible, and characterize the unique example where equality holds for each sufficiently large rank.  
In fact, we solve the analogous problems for excluding arbitrary projective and affine geometries over any prime field.

For a class $\cM$ of matroids, let $h_{\cM}(n)$ denote the maximum number of elements in a simple matroid in $\cM$ of rank at most $n$. (If $\cM$ is a nonempty subclass of the $\GF(q)$-representable matroids then $h_{\cM}(n)$ is always defined, with $h_{\cM}(n) \le \tfrac{q^n-1}{q-1}$ for all $n$.) We call $h_{\cM}$ the \emph{extremal function} of $\cM$. This is often referred to as the \emph{size} or \emph{growth rate} function; our terminology here is an attempt to agree with the broader combinatorics literature. A simple matroid $M \in \cM$ with $|M| = h_{\cM}(r(M))$ is \emph{extremal} in $\cM$. A nonempty class $\cM$ of matroids is \emph{minor-closed} if it is closed under both minors and isomorphism. The following is a simplified version of the growth rate theorem of Geelen, Kung and Whittle [\ref{gkw09}].

\begin{theorem}\label{grt}
    If $\bF$ is a prime field and $\cM$ is a proper minor-closed subclass of the $\bF$-representable matroids, then either
    \begin{itemize}
        \item there is some $\alpha \in \nni$ such that $h_{\cM}(n) \le \alpha n$ for all $n$, or
        \item $\cM$ contains all graphic matroids and there is some $\alpha \in \nni$ such that $\binom{n+1}{2} \le h_{\cM}(n) \le \alpha n^2$ for all $n$. 
    \end{itemize}
\end{theorem}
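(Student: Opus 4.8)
The plan is to deduce the statement from the full growth rate theorem of Geelen, Kung and Whittle [\ref{gkw09}], which I treat as a black box. In the form I will use, that theorem says that for every minor-closed class $\cN$ of matroids at least one of the following holds: there is a constant $c$ with $h_{\cN}(n) \le cn$ for all $n$; or $\cN$ contains all graphic matroids and there is a constant $c$ with $h_{\cN}(n) \le cn^2$ for all $n$; or there is a prime power $q$ such that $\cN$ contains every $\GF(q)$-representable matroid; or $\cN$ contains $U_{2,m}$ for every positive integer $m$. Applying this with $\cN = \cM$, the first alternative already gives $h_{\cM}(n) \le \alpha n$ for some $\alpha \in \nni$, so it suffices to rule out the last two alternatives and, in the quadratic alternative, to supply the lower bound.

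The fourth alternative is excluded at once: if $\cM$ contained $U_{2,m}$ for every $m$, then taking $m > |\bF|+1$ would place a non-$\bF$-representable matroid in $\cM$, contradicting $\cM \subseteq \{\bF\text{-representable matroids}\}$. For the third alternative, suppose $\cM$ contains every $\GF(q)$-representable matroid for some prime power $q$. Then $\PG(3,q) \in \cM$ is $\bF$-representable; but $\PG(3,q)$ has rank $4$ and hence is representable over a field $\mathbb{K}$ only if $\GF(q)$ embeds as a subfield of $\mathbb{K}$, and since the prime field $\bF$ has no proper subfield this forces $q = |\bF|$. Then $\cM$ contains every $\bF$-representable matroid, contradicting that $\cM$ is proper. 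So $\cM$ satisfies the first or the second alternative.

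In the second alternative the growth rate theorem already asserts both that $\cM$ contains all graphic matroids and that $h_{\cM}(n) \le \alpha n^2$ for some $\alpha \in \nni$, so it remains only to observe that the cycle matroid $M(K_{n+1})$ is a simple graphic matroid of rank $n$ with $\binom{n+1}{2}$ elements, whence $h_{\cM}(n) \ge \binom{n+1}{2}$. Combining the two surviving alternatives gives precisely the claimed dichotomy.

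Everything above is bookkeeping around [\ref{gkw09}]: the only points needing a moment's care are the two representability facts used to discard the exponential and rank-$2$ alternatives, and both collapse to the observation that a prime field has no proper subfield. So there is no genuine obstacle here; the real content---the growth rate theorem, whose proof relies on matroid connectivity and the structure theory of dense matroids---is imported wholesale, and reproving it would be the only substantial task if one wanted a self-contained argument.
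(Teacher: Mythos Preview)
The paper does not give its own proof of this statement; it is stated as a simplified form of the growth rate theorem of Geelen, Kung and Whittle and is cited directly from [\ref{gkw09}]. Your derivation of the simplified version from the full growth rate theorem is correct and is exactly the kind of bookkeeping one would carry out to see why the specialisation to prime fields yields only the linear and quadratic alternatives: the rank-$2$ alternative dies because $U_{2,|\bF|+2}$ is not $\bF$-representable, and the exponential alternative dies because $\PG(3,q)$ is $\bF$-representable only when $\GF(q)$ is a subfield of the prime field $\bF$, forcing $q = |\bF|$ and contradicting properness. Since the paper treats the result as a black box, there is nothing further to compare.
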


We call classes of the latter type \emph{quadratic}. 
The motivation for the applications of our main result is that they may help solve the difficult problem of classifying the extremal functions of quadratic classes of representable matroids exactly.  

To state our results, we first introduce the terminology used to naturally describe the extremal functions and the extremal matroids which will occur. Fix a finite field $\bF$ and a subgroup $\Gamma$ of the multiplicative group $\bF^*$. The \emph{weight} of a vector is its number of nonzero entries. A \emph{unit vector} is a weight-$1$ vector whose nonzero entry is $1$. A \emph{$\Gamma$-frame matrix} is an $\bF$-matrix in which each column is either a weight-$0$ vector, a unit vector, or a weight-$2$ vector of the form $\gamma e_j - e_i$ for some $\gamma \in \Gamma$ and distinct unit vectors $e_i$ and $e_j$. A matroid represented by a $\Gamma$-frame matrix is a \emph{$\Gamma$-frame matroid.} The class of $\Gamma$-frame matroids is well-known to be minor-closed; see [\ref{zaslav}] for a comprehensive reference.

 Write $\cG(\Gamma)$ for the class of $\Gamma$-frame matroids, and $\cG(\Gamma)^t$ for the class of matroids having a representation $\sqbinom{P}{A}$ for some $\bF$-matrix $P$ with at most $t$ rows and some $\Gamma$-frame matrix $A$. (In this notation $\bF$ is implicit.) Note that $\cG(\{1\})^0$ is the class of graphic matroids. We will see that $\cG(\Gamma)^t$ is minor-closed, and has extremal function $f_{|\bF|,|\Gamma|,t}(n)$ defined by 
\[f_{|\bF|,|\Gamma|,t}(n) = |\bF|^t\left(|\Gamma|\tbinom{n-t}{2} + n-t\right) - \tfrac{|\bF|^t-1}{|\bF|-1}\]
for all $n \ge t$. We refer to this function $f_{q,g,t}(n)$, which is quadratic in $n$ with leading term $\tfrac{1}{2}gq^tn^2$, frequently throughout. For each $n \ge t$, there is a unique rank-$n$ extremal matroid $M$ in $\cG(\Gamma)^t$ given by $M \cong \si\left(M\sqbinom{P}{A}\right)$, where $\sqbinom{P}{A}$ includes all possible columns for which $P$ has $t$ rows and $A$ is a $\Gamma$-frame matrix with $n-t$ rows. We call this extremal matroid $\DG(n,\Gamma)^t$; it will be discussed later in more detail. 

We can now fully state the main applications of our main result (Theorem \ref{templatetech}) for excluding projective and affine geometries over all prime fields. The $p \le 3$ case differs from the general case; this is essentially because rank-$3$ projective/affine geometries can be binary/ternary frame matroids but are not frame matroids over larger fields. These results are all subject to Hypothesis \ref{structure1}.

	\begin{theorem}\label{maintwo}
		Let $t \in \nni$ and $N$ be one of $\PG(t+2,2)$ or $\AG(t+3,2)$. If $\cM$ is the class of binary matroids with no $N$-minor, then 
		\[h_{\cM}(n) = f_{2,1,t}(n) = 2^t\tbinom{n-t+1}{2} + 2^t-1\]
		for all sufficiently large $n$. Moreover, if $M$ is extremal in $\cM$ and $r(M)$ is sufficiently large, then $M \cong \DG(r(M),\{1\})^t$. 
	\end{theorem}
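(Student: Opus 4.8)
The plan is to establish the lower bound by exhibiting $\DG(n,\{1\})^t$ and the matching upper bound, together with the characterization, via the Geelen--Gerards--Whittle structure theorem. Theorem~\ref{mainsimple} is the special case $N=\PG(t+2,2)$, so I would treat Theorem~\ref{maintwo} directly, writing $\cM$ for the class of binary matroids with no $N$-minor.

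\emph{Lower bound.} As recorded after the definition of $f_{q,g,t}$, the matroid $\DG(n,\{1\})^t$ lies in $\cG(\{1\})^t$ and has $f_{2,1,t}(n)$ elements, so it is enough to check that $\cG(\{1\})^t$ has no $N$-minor. Since $\cG(\{1\})^t$ is minor-closed with extremal function $f_{2,1,t}$, any simple $N_0\in\cG(\{1\})^t$ obeys $|N_0|\le f_{2,1,t}(r(N_0))$; as $|\PG(t+2,2)|=2^{t+3}-1>f_{2,1,t}(t+3)$, we get $\PG(t+2,2)\notin\cG(\{1\})^t$, so $\cG(\{1\})^t$ has no $\PG(t+2,2)$-minor. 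Contracting a single point of $\AG(t+3,2)$ yields $\PG(t+2,2)$, so every matroid with an $\AG(t+3,2)$-minor has a $\PG(t+2,2)$-minor, and hence $\cG(\{1\})^t$ also has no $\AG(t+3,2)$-minor. Thus $\DG(n,\{1\})^t\in\cM$ for all $n$, giving $h_\cM(n)\ge f_{2,1,t}(n)$.

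\emph{Upper bound, via templates.} First note $\cM$ is minor-closed and proper, and that it contains every graphic matroid (no graphic matroid has a $\PG(2,2)$-minor, while both $\PG(t+2,2)$ and $\AG(t+3,2)$ do), so by the growth rate theorem $\cM$ is quadratic. I would then invoke [\ref{ggwstructure}]: there is a finite list of frame templates (over $\GF(2)$, hence with trivial group $\{1\}$) and lift templates so that every simple member of $\cM$ of large enough rank is a minor of a matroid conforming to one of them, with each template reduced and its class $\cM(\Phi)$ of conforming matroids and their minors contained in $\cM$; consequently $h_\cM(n)=\max_\Phi h_{\cM(\Phi)}(n)$ for large $n$, and it suffices to prove $h_{\cM(\Phi)}(n)\le f_{2,1,t}(n)$ for each such $\Phi$ and all large $n$, with equality for infinitely many $n$ only when $\cM(\Phi)=\cG(\{1\})^t$. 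For a frame template, a conforming matroid is, up to a bounded-rank perturbation, obtained from a $\{1\}$-frame matroid together with a ``top'' block of some rank $s(\Phi)$ and a bounded number of exceptional columns, so $\cM(\Phi)$ sits inside $\cG(\{1\})^{s(\Phi)}$ up to a bounded additive error and $h_{\cM(\Phi)}(n)\le f_{2,1,s(\Phi)}(n)+O(1)$. Since $\cM$ has no $\PG(t+3,2)$-minor, the growth-rate bounds of [\ref{gkw09}] force $s(\Phi)\le t+1$; and $s(\Phi)=t+1$ is impossible here, because then either $\cM(\Phi)$ has strictly smaller leading coefficient (so $h_{\cM(\Phi)}(n)<f_{2,1,t}(n)$ eventually) or, by the fact that a sufficiently dense perturbed frame matroid contains all bounded frame minors, $\cM(\Phi)$ contains $\AG(t+3,2)$ (which lies in $\cG(\{1\})^{t+1}$), contradicting $\cM(\Phi)\subseteq\cM$. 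When $s(\Phi)<t$ one is done, and when $s(\Phi)=t$ a careful count shows the perturbation, the exceptional columns, and any restriction imposed by the template data cannot push the count above $f_{2,1,t}(n)$ for large $n$, with equality for infinitely many $n$ forcing all of these to be trivial, i.e. $\cM(\Phi)=\cG(\{1\})^t$. Lift templates are handled analogously, with rank-$s$ lifts of graphic matroids replacing frame matroids: $h_{\cM(\Psi)}(n)\le f_{2,1,t}(n)$ for large $n$ with equality never attained -- routine for $\PG(t+2,2)$, but for $\AG(t+3,2)$ requiring a comparison of lower-order terms, since the densest $N$-minor-free lift construction shares the leading coefficient of $f_{2,1,t}$.

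\emph{Conclusion and main obstacle.} Combining the cases gives $h_\cM(n)=f_{2,1,t}(n)$ for all large $n$; and if $M$ is extremal of large rank, it conforms as a minor to a template attaining equality, which by the above is the frame template with $\cM(\Phi)=\cG(\{1\})^t$, so by the uniqueness of the rank-$r(M)$ extremal matroid of $\cG(\{1\})^t$ noted in the excerpt, $M\cong\DG(r(M),\{1\})^t$. The hard part will be the template analysis: converting the qualitative output of [\ref{ggwstructure}] into the sharp inequality $h_{\cM(\Phi)}(n)\le f_{2,1,t}(n)$ and the exact equality condition for every template -- in particular, controlling the contribution of bounded perturbations and exceptional columns once $n$ is large, ruling out dense frame templates at level $t+1$ in the $\AG(t+3,2)$ case (using $\AG(t+3,2)\in\cG(\{1\})^{t+1}$), and the lower-order-term comparison needed for lift templates.
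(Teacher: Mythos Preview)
Your overall shape is right---structure theorem, reduce to templates, bound each template---but there are three specific places where the argument as written would not go through, and one of them is the heart of the matter.

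First, a minor technical point: the structure theorem of [\ref{ggwstructure}] as used here applies only to vertically $k$-connected matroids of large rank, not to all simple matroids of large rank. You need a separate connectivity reduction (Theorem~\ref{connreduction} for the bound, Lemma~\ref{equalityhc} for the equality characterization) to pass from arbitrary extremal matroids in $\cM$ to highly connected ones before the template machinery applies. Second, the structure theorem in this paper gives frame templates $\bT$ and \emph{dual} frame templates $\bT^*$, not ``lift templates''; and the dual side is dispatched trivially, since any $M\in\cM^*(\Psi)$ satisfies a linear bound $\elem(M)\le |\bF|^{c(\Psi)}(3r(M)+O(1))$ (Lemma~\ref{dualdensity}), so no dual template can achieve the quadratic density $f_{2,1,t}(n)$. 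There is no delicate lower-order comparison needed there.

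The real gap is your treatment of frame templates $\Phi$ with $s(\Phi)=t$. You write that ``a careful count shows the perturbation, the exceptional columns, and any restriction imposed by the template data cannot push the count above $f_{2,1,t}(n)$''---but this is exactly what is \emph{not} true in general, and is where the specific choice of $N$ enters. The paper's approach (Lemma~\ref{templatetech}) is not a density count but a structural trichotomy: a reduced template with $\dim(\Lambda)=t$ either (i) admits no highly connected matroid of large rank, (ii) satisfies $\cM(\Phi)\subseteq\cG(\{1\})^t$ outright, or (iii) has $\sqpinchg{\{1\}}{t}\subseteq\ol{\cM(\Phi)}$ (the class generated by single-element extensions $\DG^{\smallsq}(n,\{1\})^t$). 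Outcome (iii) is then ruled out not by density but by the fact, proved in Lemma~\ref{techtwo}, that $\AG(t+3,2)$ and hence $\PG(t+2,2)$ actually lie in $\sqpinchg{\{1\}}{t}$, so $\sqpinchg{\{1\}}{t}\not\subseteq\cM$. Without this trichotomy and this specific membership fact about $N$, there is no mechanism in your outline to force $\cM(\Phi)=\cG(\{1\})^t$ at level $t$; an arbitrary template at that level can genuinely exceed $f_{2,1,t}(n)$ by a linear amount (Theorem~\ref{bigmain} allows $\alpha>0$), and it is precisely the hypothesis $N\in\sqpinchg{\{1\}}{t}$ that collapses $\alpha$ to zero.
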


	\begin{theorem}\label{mainthree}
		If $t \in \nni$, the class $\cM$ of ternary matroids with no $\AG(t+2,3)$-minor satisfies
		\[h_{\cM}(n) = f_{3,2,t}(n) = 3^t(n-t)^2 + \tfrac{1}{2}(3^t-1)\] 
		for all sufficiently large $n$. Furthermore, if $M$ is extremal in $\cM$ and $r(M)$ is sufficiently large, then $M \cong \DG(r(M),\GF(3)^*)^t$. 
	\end{theorem}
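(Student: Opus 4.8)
\emph{Overview.} The plan is to prove the matching bounds $h_{\cM}(n)\ge f_{3,2,t}(n)$ and $h_{\cM}(n)\le f_{3,2,t}(n)$ for all large $n$, and then to upgrade the second bound to the uniqueness statement. The lower bound comes from the construction $\DG(n,\GF(3)^*)^t$; the upper bound uses the Geelen--Gerards--Whittle structure theorem to reduce to an extremal question inside a frame class $\cG(\GF(3)^*)^{k}$, which is then settled by a counting and stability analysis.

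\emph{Lower bound.} Take $M=\DG(n,\GF(3)^*)^t$. Being extremal in the minor-closed class $\cG(\GF(3)^*)^t$, it has exactly $f_{3,2,t}(n)$ elements, so it remains only to check $M\in\cM$; this is a consequence of the more general assertion $(\star)$: for every $s\in\nni$, no matroid in $\cG(\GF(3)^*)^{s}$ has an $\AG(s+2,3)$-minor. I would prove $(\star)$ by induction on $s$. The base case $s=0$ says no $\GF(3)^*$-frame matroid has an $\AG(2,3)$-minor: $\AG(2,3)$, which has rank $3$ and $9$ elements but no $U_{2,4}$-restriction, is not itself a $\GF(3)^*$-frame matroid (the only rank-$3$, $9$-element one being $\DG(3,\GF(3)^*)$, which does have a $U_{2,4}$-restriction), and the biased-graph description of $\cG(\GF(3)^*)$ together with the vertical $3$-connectivity of $\AG(2,3)$ rules it out as a proper minor. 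For the inductive step, an $\AG(s+2,3)$-minor of a matroid $M\sqbinom{P}{A}$ with $P$ having $s$ rows yields, after contracting a suitably chosen element lying in the span of the rows of $P$, an $\AG(s+1,3)$-minor of a matroid in $\cG(\GF(3)^*)^{s-1}$, contradicting the inductive hypothesis.

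\emph{Reduction to the frame class.} For the upper bound, let $M$ be a simple rank-$n$ matroid in $\cM$ with $n$ large and $|M|=h_{\cM}(n)$. Since $\PG(m,3)$ has an $\AG(t+2,3)$-restriction for $m\ge t+2$, $\cM$ is a proper minor-closed subclass of the $\GF(3)$-representable matroids; and since $\AG(2,3)$, a restriction of $\AG(t+2,3)$, is not $\GF(2)$-representable, $\cM$ contains every graphic matroid, so $\cM$ is quadratic. By the structure theorem there is a constant $k=k(t)$ such that every vertically $k$-connected member of $\cM$ of large rank is a bounded-rank perturbation of a member of $\cG(\GF(3)^*)^k$ (for density reasons the relevant group is $\GF(3)^*$, and the co-frame and bounded-rank alternatives of the structure theorem are negligible). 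Since a bounded-rank perturbation of a matroid in $\cG(\GF(3)^*)^k$ is again a member of $\cG(\GF(3)^*)^{k'}$ for some $k'=k+O(1)$ (the perturbation is absorbed into extra rows, and this class is minor-closed), and since the extremal matroid may be taken to be highly connected -- or one passes to a large, nearly as dense, vertically $k$-connected minor -- we may assume $M\in\cG(\GF(3)^*)^{k'}$.

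\emph{The extremal problem in the frame class; the main obstacle.} It now suffices to prove -- and then transfer back to $M$ -- the self-contained statement that for every $s\in\nni$, a simple matroid in $\cG(\GF(3)^*)^{s}$ of sufficiently large rank $n$ with no $\AG(t+2,3)$-minor has at most $f_{3,2,t}(n)$ elements, with equality only for $\DG(n,\GF(3)^*)^{t}$. Although $\cG(\GF(3)^*)^{s}$ is far denser than $f_{3,2,t}$ once $s>t$, excluding $\AG(t+2,3)$ collapses the effective lift-rank back to $t$: in a representation $\sqbinom{P}{A}$ in which $P$ has rank exceeding $t$ while $A$ is sufficiently ``spread out'', an $\AG(t+2,3)$-minor must occur -- a statement converse in spirit to $(\star)$. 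I expect the crux to be making this precise: identifying exactly which configurations of $P$ relative to $A$ force an $\AG(t+2,3)$-minor, and then extracting from the hypothesis the \emph{exact} count $f_{3,2,t}(n)$ rather than only its leading term $3^t n^2$. The uniqueness is then a stability argument -- a member of $\cG(\GF(3)^*)^{s}\cap\cM$ with nearly $f_{3,2,t}(n)$ elements must agree with $\DG(n,\GF(3)^*)^t$ off a bounded-rank set, and any genuine deviation (in particular the perturbation inherited from the structure theorem) strictly loses elements once $n$ is large -- and carrying it out while tracking the lower-order terms separating $f_{3,2,t}$ from neighbouring quadratics is the other delicate point.
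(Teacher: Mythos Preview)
Your reduction step contains a genuine error. You assert that a bounded-rank perturbation of a matroid in $\cG(\GF(3)^*)^k$ lies again in some $\cG(\GF(3)^*)^{k'}$ with $k'=k+O(1)$, because ``the perturbation is absorbed into extra rows''. This is false. If $M(A)$ has $A=\sqbinom{P}{Q}$ with $Q$ a $\Gamma$-frame matrix, and $A'=A+UV$ is a rank-$r$ perturbation, then $\sqbinom{V}{A'}$ is indeed row-equivalent to $\sqbinom{V}{A}\in\cG(\Gamma)^{k+r}$-form, but $M(A')$ is a \emph{projection} of $M\sqbinom{V}{A'}$, not a restriction: to realise it as a minor you must append $r$ new columns (carrying $-U$) and contract them, and after contraction the frame part is destroyed. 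The resulting matroid lives in the class the paper calls $\cG(\Gamma)^{k+r}_r$ (frame matroids lifted by $k+r$ rows \emph{and} extended by $r$ arbitrary columns, then minor-closed), which is strictly larger than any $\cG(\Gamma)^{k'}$. This is exactly why the structure theorem outputs \emph{templates} $(\Gamma,C,X,Y_0,Y_1,A_1,\Delta,\Lambda)$ rather than merely ``frame plus bounded lift'': the sets $C,Y_0,Y_1$ and the group $\Delta$ encode the column-side of the perturbation that cannot be pushed into $X$.

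Even granting the reduction, your step~3 is the entire problem and you leave it as an expectation. The paper's route avoids this completely. Its key lemma is a dichotomy for reduced templates (Lemma~\ref{templatetech}): a reduced template $\Phi$ with $\dim(\Lambda)=t$ either satisfies $\cM(\Phi)\subseteq\cG(\Gamma)^t$ outright, or its minor-closure contains the single-element-extension class $\sqpinch{t}$ (when $\Gamma=\bF^*$). The theorem then follows not from a density/stability argument inside $\cG(\GF(3)^*)^{k'}$, but from the concrete fact (Lemma~\ref{techthree}) that $\AG(t+2,3)\in\sqpinchg{\GF(3)^*}{t}$: any template falling into the second branch would put $\AG(t+2,3)$ back into $\cM$, so every relevant template lands in the first branch, and the exact bound with uniqueness drops out immediately from $h_{\cG(\GF(3)^*)^t}=f_{3,2,t}$. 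The ``crux'' you anticipate---forcing an $\AG(t+2,3)$-minor from a high-rank lift---is thus replaced by the much softer task of exhibiting $\AG(t+2,3)$ as a minor of a single explicit extension $\DG^{\smallsq}(t+4,\GF(3)^*)^t$.
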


    \begin{theorem}\label{mainodd}
        Let $t \in \nni$ and $N$ be either $\PG(t+1,p)$ for some prime $p \ge 3$ or $\AG(t+1,p)$ for some prime $p \ge 5$. If $\cM$ is the class of $\GF(p)$-representable matroids with no $N$-minor, then \[h_{\cM}(n) = f_{p,(p-1)/2,t}(n) = p^t\left(\tfrac{p-1}{2}\tbinom{n-t}{2} + n-t\right) + \tfrac{p^t-1}{p-1}\] 
        for all sufficiently large $n$. Moreover, if $M$ is extremal in $\cM$ and $r(M)$ is sufficiently large, then $M \cong \DG(r(M),\Gamma)^t$, where $\Gamma$ is the index-$2$ subgroup of $\GF(p)^*$.
    \end{theorem}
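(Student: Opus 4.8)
\emph{The lower bound.} We use that $\cG(\Gamma)^t$ is minor-closed, has extremal function $f_{p,(p-1)/2,t}$, and has $\DG(n,\Gamma)^t$ as its unique rank-$n$ extremal matroid; so it is enough to prove that $N\notin\cG(\Gamma)^t$, since then $\cG(\Gamma)^t\subseteq\cM$ and $h_{\cM}(n)\ge f_{p,(p-1)/2,t}(n)$ with $\DG(n,\Gamma)^t\in\cM$ attaining equality. Both candidates for $N$ have rank $t+2$, so if $N\in\cG(\Gamma)^t$ then $|N|\le f_{p,(p-1)/2,t}(t+2)=p^t\cdot\tfrac{p+3}{2}+\tfrac{p^t-1}{p-1}$. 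But a direct computation gives $|\PG(t+1,p)|-f_{p,(p-1)/2,t}(t+2)=\tfrac12(p-1)p^t>0$ for every prime $p\ge 3$, and $|\AG(t+1,p)|-f_{p,(p-1)/2,t}(t+2)=\tfrac12(p-3)p^t-\tfrac{p^t-1}{p-1}>0$ for every prime $p\ge 5$; in each case this contradicts the bound above, so $N\notin\cG(\Gamma)^t$ as required. (These two lower thresholds on $p$ are exactly the ones in the statement.)

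\emph{Upper bound: reductions and structure.} Let $M\in\cM$ be simple and extremal with $r(M)=n$ large. A standard reduction, exploiting that $f_{p,(p-1)/2,t}$ is increasing and quadratic, lets us assume $M$ is vertically $\ell$-connected for a suitable constant $\ell$ (and has a large graphic minor, so that the structure theorem applies). We then invoke the structure theorem of Geelen, Gerards and Whittle for the minor-closed class $\cM$: there is a finite set of frame templates over $\GF(p)$, each of which generates a subclass of $\cM$, such that for $n$ large $M$ is, up to duality, a $\mathsf T$-matroid for one of these templates $\mathsf T$. The dual alternative produces matroids of density $O(n)$, far below $f_{p,(p-1)/2,t}(n)$ once $n$ is large, so $M$ is itself a $\mathsf T$-matroid.

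\emph{Template analysis, the crux.} Attach to each template $\mathsf T=\templatecrap$ its frame group $\Delta$ and an effective number $c=c(\mathsf T)$ of non-frame rows (for a suitably reduced template, essentially $|Y_0\cup Y_1|$). Two facts are needed. First, a density estimate: a $\mathsf T$-matroid of rank $n$ has at most $f_{p,|\Delta|,c}(n)+O(1)$ elements, since the $\Delta$-frame part contributes at most $|\Delta|\tbinom n2+n$ columns, each decorated by the non-frame rows in at most $p^{c}$ ways, while the columns meeting $C\cup X$ contribute only $O(1)$ more. Second, an obstruction: a $\mathsf T$-matroid of large rank has a $\DG(m,\Delta)^{c}$-minor for every $m$ with $m+O(1)\le n$, so if $\mathsf T$-matroids have no $N$-minor then $N\notin\cG(\Delta)^{c}$, and the count of the first paragraph, applied to $\cG(\Delta)^{c}$ in place of $\cG(\Gamma)^t$, forces either $c\le t-1$, or $c=t$ and $|\Delta|\le\tfrac{p-1}{2}$. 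In every allowed case the leading coefficient $\tfrac12|\Delta|p^{c}$ of $f_{p,|\Delta|,c}$ is at most $\tfrac12\cdot\tfrac{p-1}{2}\cdot p^t$, with equality only when $c=t$ and $\Delta$ is the index-$2$ subgroup $\Gamma$ (which is unique since $\GF(p)^*$ is cyclic). Hence for $n$ large $|M|\le f_{p,|\Delta|,c}(n)+O(1)<f_{p,(p-1)/2,t}(n)$ unless $(\Delta,c)=(\Gamma,t)$; together with the lower bound this forces $h_{\cM}(n)=f_{p,(p-1)/2,t}(n)$, and $(\Delta,c)=(\Gamma,t)$ whenever equality holds. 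In that case the $O(1)$ error has no room to be positive, so $\mathsf T$ must be, up to the standard template equivalences, the canonical template for $\cG(\Gamma)^t$; then $M\in\cG(\Gamma)^t$, and the uniqueness of the extremal matroid there gives $M\cong\DG(n,\Gamma)^t$.

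\emph{The main obstacle.} The hard part is the template analysis: deriving the clean numerical constraints on $(\Delta,c(\mathsf T))$ from the hypothesis that $\mathsf T$-matroids avoid $N$, and making the density estimate sharp enough — tracking the separate effects of $C$, $X$, $Y_0$, $Y_1$, $A_1$ and $\Lambda$ and showing that any nontrivial perturbation or extra non-frame structure strictly lowers the element count for large $n$ — to conclude both that $(\Gamma,t)$ is the unique extremal parameter pair and that the associated extremal matroid is exactly $\DG(n,\Gamma)^t$.
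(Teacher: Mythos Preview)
Your overall architecture (lower bound via $N\notin\cG(\Gamma)^t$, connectivity reduction, structure theorem, dual templates too sparse, then template analysis) matches the paper. The lower-bound computation is correct and is exactly the paper's argument in Lemma~\ref{techodd}.

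The template analysis, however, has a real gap. Your density estimate ``$f_{p,|\Delta|,c}(n)+O(1)$'' is too strong: the columns you call ``columns meeting $C\cup X$'' are not the only non-frame contribution. In a matrix conforming to $\Phi$, each column indexed by $Z$ is a unit vector in the frame rows \emph{plus a column of $A_1[Y_1]$}; there are roughly $n\cdot|Y_1|$ such columns, an $O(n)$ term, and the paper's Lemma~\ref{templatedensity} indeed only obtains $\elem(M)\le f_{p,|\Gamma|,t}(r(M))+O(r(M))$. With a linear error term your final step --- ``the $O(1)$ error has no room to be positive, so $\mathsf T$ must be the canonical template for $\cG(\Gamma)^t$'' --- simply fails: a template with the correct pair $(\Gamma,t)$ can still produce matroids strictly denser than $\DG(n,\Gamma)^t$ by a linear amount, so density alone cannot pin down the template.

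What the paper does instead is a structural dichotomy (Lemma~\ref{templatetech}): once a reduced template has frame group $\Gamma$ and $\dim(\Lambda)=t$, either $\cM(\Phi)\subseteq\cG(\Gamma)^t$ outright, or the minor-closure of $\cM(\Phi)$ contains $\xpinch{t}$ for some $x\notin\Gamma$ (or $\sqpinch{t}$ when $\Gamma=\bF^*$). The second alternative is then excluded not by counting but by the companion fact, also in Lemma~\ref{techodd}, that $N\in\xpinch{t}$ for \emph{every} $x\in\bF^*-\Gamma$; hence that alternative would put $N$ in $\cM$, a contradiction. You never invoke this membership $N\in\xpinch{t}$, and without it there is no mechanism to rule out templates that are ``$\cG(\Gamma)^t$ plus a little extra''. (A smaller point: your use of $\Delta$ for the frame group clashes with the paper's notation, where $\Delta$ is the additive row-group and $\Gamma$ is the frame group; and your claim that the first-paragraph count, run in reverse, shows $N\in\cG(\Delta')^{c}$ for larger $(c,|\Delta'|)$ is not valid --- that direction needs explicit constructions, which the paper supplies in Lemma~\ref{techodd}.)
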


	Theorem~\ref{maintwo} was previously known only for $t=0$ and, in the case of projective geometries, $t = 1$ (see [\ref{gvz},\ref{heller},\ref{kmpr}]); Theorems~\ref{mainthree} and~\ref{mainodd} were unknown for all $t$. 
	They will all follow from a more general result, Theorem~\ref{bigmain}, which is also subject to Hypothesis \ref{structure1}; we state a simplified version here.
	
	\begin{theorem}\label{simplifiedmain}
		Let $\bF = \GF(p)$ be a prime field. If $\cM$ is a quadratic minor-closed class of $\bF$-representable matroids then there exists $\Gamma \le \bF^*$ and $t \in \nni$  such that $\cG(\Gamma)^t \subseteq \cM$ and $h_{\cM}(n) = f_{p,|\Gamma|,t}(n) + O(n)$. Furthermore, either 
		\begin{itemize}
			\item for all sufficiently large $n$ we have $h_{\cM}(n) = f_{p,|\Gamma|,t}(n)$ and $\DG(n,\Gamma)^t$ is the unique extremal rank-$n$ matroid in $\cM$, or 
			\item for all sufficiently large $n$, the class $\cM$ contains a simple rank-$n$ extension of $\DG(n,\Gamma)^t$. 
		\end{itemize}
	\end{theorem}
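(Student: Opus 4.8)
The plan is to prove Theorem~\ref{simplifiedmain} by combining the Geelen--Gerards--Whittle structure theorem for quadratic minor-closed classes of $\bF$-representable matroids (cited via [\ref{ggwstructure}]) with a careful analysis of the extremal function of the ``template-based'' classes the structure theorem produces. The structure theorem says, roughly, that every simple matroid of large rank in such a class $\cM$ is, up to a bounded-rank perturbation, a restriction of a matroid produced by one of finitely many \emph{frame templates} (together with, dually, a bounded collection of ``co-templates'', but the quadratic hypothesis rules out the non-frame contributions from dominating). So first I would set up the template machinery: recall that a frame template $\templatecrap$ produces, at each rank $n$, a matroid $M(\Gamma, \ldots)$ whose simplification has size that is quadratic in $n$, and whose leading behaviour is governed by a group $\Gamma \le \bF^*$ and an integer $t$ recording the number of ``extra'' rows outside the frame part. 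The key quantitative point is that among all frame templates, the ones maximizing the quadratic coefficient $\tfrac12 |\Gamma| q^t$ subject to producing matroids in $\cM$ are exactly those equivalent to the template yielding $\cG(\Gamma)^t$ for the appropriate $\Gamma$ and $t$.

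Next I would carry out the extremal-function computation for $\cG(\Gamma)^t$ itself: show that $h_{\cG(\Gamma)^t}(n) = f_{p,|\Gamma|,t}(n)$ with equality witnessed uniquely by $\DG(n,\Gamma)^t$. This is the matroid obtained by taking all columns $\sqbinom{P}{A}$ with $P$ having $t$ rows and $A$ a rank-$(n-t)$ $\Gamma$-frame matrix, then simplifying; counting the distinct points gives exactly $q^t\bigl(|\Gamma|\binom{n-t}{2} + (n-t)\bigr) - \tfrac{q^t-1}{q-1}$, and a short argument (essentially that a $\Gamma$-frame matroid on rank $n-t$ has at most $|\Gamma|\binom{n-t}{2}+(n-t)$ points, e.g.\ by the standard edge-count for $\Gamma$-gain graphs, combined with the observation that each such point ``lifts'' to at most $q^t$ points and the rank-$t$ part contributes $\tfrac{q^t-1}{q-1}$) shows this is best possible, with uniqueness because any deficiency in the frame part or the lifting strictly decreases the count. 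Then, using the structure theorem, any simple $M \in \cM$ of large rank is a restriction of a bounded perturbation of some template matroid $M'$; bounding $|M|$ by $|M'|$ plus an $O(n)$ error coming from the perturbation (a rank-$k$ perturbation changes the size of a simple rank-$n$ matroid by $O(q^k n) = O(n)$) yields $h_{\cM}(n) \le \max f_{p,|\Gamma_i|,t_i}(n) + O(n)$ over the finitely many templates, and the reverse inequality follows since $\cG(\Gamma)^t \subseteq \cM$ gives $h_{\cM}(n) \ge f_{p,|\Gamma|,t}(n)$. Matching leading terms forces $\tfrac12 |\Gamma| q^t$ to be the maximum quadratic coefficient, which pins down $\Gamma$ and $t$ up to the equivalences that do not change $f$.

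For the dichotomy in the two bullet points, I would argue as follows. Having identified the maximizing $\Gamma, t$, consider whether $\cM$ contains any template whose matroid, at large rank, \emph{properly} contains a copy of $\DG(n,\Gamma)^t$ as a simple rank-$n$ matroid (equivalently, a simple rank-$n$ extension of it lying in $\cM$). If it does, we are in the second case and there is nothing more to prove. If it does not, then I claim every simple rank-$n$ matroid in $\cM$ of large rank, and of size at least $f_{p,|\Gamma|,t}(n) - O(1)$, must in fact be isomorphic to $\DG(n,\Gamma)^t$: the structure theorem places it inside a perturbed template matroid $M'$, but since no extension of $\DG(n,\Gamma)^t$ within $\cM$ exists, the only way to achieve the full count is for the perturbation to be trivial and the template to be (equivalent to) the one for $\cG(\Gamma)^t$; a ``stability'' style argument then upgrades ``size within $O(1)$ of extremal'' to ``exactly extremal and isomorphic to $\DG(n,\Gamma)^t$''. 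This last stability step is where uniqueness of the $\cG(\Gamma)^t$-extremal matroid is crucially used.

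The main obstacle, I expect, is the interface with the structure theorem: extracting from the (conjunction of template and co-template) structure theorem the clean statement that, in the quadratic regime, a large-rank member of $\cM$ is a bounded perturbation of a \emph{frame}-template restriction, and then verifying that the perturbation only costs $O(n)$ elements and that it cannot be used to beat $\DG(n,\Gamma)^t$ except in ways captured by the second bullet. Controlling the finitely many templates simultaneously --- in particular ruling out that some non-maximizing template nonetheless, after perturbation, produces more than $f_{p,|\Gamma|,t}(n)$ points --- requires the perturbation bound to be genuinely additive and independent of $n$, and making that rigorous against the structure theorem's precise guarantees is the delicate part. Everything else (the edge-count for $\Gamma$-frame matroids, the point-count for $\DG(n,\Gamma)^t$, the $O(q^k n)$ perturbation estimate) is routine.
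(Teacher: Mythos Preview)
Your broad strategy matches the paper's: invoke the structure theorem to reduce to finitely many frame templates, bound template densities by $f_{p,|\Gamma|,t}(n)+O(n)$, rule out the co-templates by a linear density bound, and pick out the $(\Gamma,t)$ maximizing the leading coefficient. The paper does exactly this, but with considerably more scaffolding than you anticipate: a sequence of reductions (Lemmas~\ref{contracttemplate}--\ref{makeskew}) is needed to normalize an arbitrary template to a \emph{reduced} one in which $\Delta$ and $\Lambda$ have a specific shape, and only then do the density estimates (Lemmas~\ref{templatedensity} and~\ref{dualdensity}) and the containment $\cG(\Gamma)^t\subseteq\ol{\cM(\Phi)}$ (Lemma~\ref{subclass}) go through. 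Your ``bounded-rank perturbation costs $O(q^k n)$'' heuristic is morally right but templates are not literally rank-$k$ perturbations of frame matroids; the bookkeeping with $C,Y_0,Y_1,Z,\Delta,\Lambda$ is where the work is. You also need the connectivity reduction (Theorem~\ref{connreduction} and Lemma~\ref{equalityhc}) to pass from arbitrary dense matroids in $\cM$ to the highly vertically connected ones to which the structure theorem applies; you never mention this step.

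The real gap is your dichotomy argument. You propose that if $\cM$ contains no simple extension of $\DG(n,\Gamma)^t$, then ``the only way to achieve the full count is for the perturbation to be trivial and the template to be equivalent to the one for $\cG(\Gamma)^t$,'' to be shown by ``a stability style argument.'' This is not how the paper proceeds, and I do not see how to make your version work: a template could in principle produce matroids outside $\cG(\Gamma)^t$ that still have exactly $f_{p,|\Gamma|,t}(n)$ points, and nothing in your outline excludes that. The paper instead proves a structural trichotomy (Lemma~\ref{templatetech}): every reduced template with $\dim(\Lambda)=t$ either (i) produces no highly connected matroids at all, (ii) satisfies $\cM(\Phi)\subseteq\cG(\Gamma)^t$ outright, or (iii) has minor-closure containing one of the canonical extension classes $\sqpinch{t}$ or $\xpinch{t}$. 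The proof of (ii) is a genuine structural argument about the matrix $A_1$ (showing its $X_1$-rows must look like frame columns), driven by the fact that $\dim(\Delta)\ge 1$ already forces $\cG(\Gamma)^t_1\subseteq\ol{\cM(\Phi)}$ via Lemma~\ref{subclass}. This lemma, not a counting/stability argument, is what converts ``no extension in $\cM$'' into ``$h_{\cM}(n)=f_{p,|\Gamma|,t}(n)$ exactly with unique extremal matroid''; it is the heart of the proof and is absent from your plan.
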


 Most of our material, when specialised to binary matroids, was originally proved in [\ref{walsh}].

\section{Preliminaries}
    
    We use the notation of Oxley [\ref{oxley}], and also write $|M|$ for $|E(M)|$ and $\elem(M)$ for $|\si(M)|$ for a matroid $M$. The rows and columns of matrices and the co-ordinates of vectors will always be indexed by sets, and thus have no inherent ordering. We write $0_A$ and $\one{A}$ for the zero and all-ones vector in $\bF^A$ respectively, and $0_{A \times B}$ for the zero matrix in $\bF^{A \times B}$, and we identify $\bF^A \times \bF^B$ with $\bF^{A \cup B}$ for disjoint $A$ and $B$. For a matrix $P = \bF^{A \times B}$, we write $P[A',B']$ for the submatrix with rows in $A'$ and columns in $B'$, and write $P[A']$ for $P[A',B]$ and $P[B']$ for $P[A,B']$ where there is no ambiguity. If $|A| = |B|$ but $A \ne B$ then the `determinant' of a matrix $P \in \bF^{A \times B}$ is only defined up to sign, and identity matrices do not make sense, but nonsingularity and $P^{-1}$ (where it exists) are well-defined. We refer to any square matrix in $\bF^{A \times B}$ whose columns are distinct unit vectors as a \emph{bijection matrix}.
    
    Let $U \subseteq \bF^E$. For a vector $u \in U$ and a set $X \subseteq E$, we write $u[X]$ for the co-ordinate projection of $u$ onto $X$, and $U[X] = \{u[X]\colon u \in U\}$. For a set $\Gamma \subseteq \bF$ (typically a multiplicative subgroup), write $\Gamma U = \{\gamma u \colon u \in U, \gamma \in \Gamma\}$.  For a matrix $P \in \bF^{E \times E}$ we denote $\{Pu\colon u \in U\}$ by $PU$. If $U$ and $W$ are additive subgroups of $\bF^E$ then we say $U$ and $W$ are \emph{skew} if $U \cap W = \{0\}$, and if they are skew subspaces with $U + W = \bF^E$ then they are \emph{complementary}; a pair of complementary subspaces gives rise to a well-defined projection map $\psi\colon \bF^E \to W$ for which $\psi(u+w) = w$ for all $u \in U$ and $w \in W$.

    \subsection*{Represented Matroids}

    Most of our arguments involve manipulation of matrices; for this purpose we will use a formalised notion of a matroid representation. Let $\bF$ be a field and $E$ be a finite set. We say two subspaces $U_1$ and $U_2$ of $\bF^E$ are \emph{projectively equivalent} if $U_1 = U_2D$ for some nonsingular diagonal matrix $D$. 
    For a field $\bF$, we define an \emph{$\bF$-represented matroid} to be a pair $(E,U)$ where $E$ is a finite set and $U$ is a subspace of $\bF^E$; two represented matroids $(E_1,U_1)$ and $(E_2,U_2)$ are equal if $E_1 = E_2$ and $U_1$ and $U_2$ are projectively equivalent, and are \emph{isomorphic} if there is a bijection $\varphi \colon E_1 \to E_2$ such that $\{(u_{\varphi(e)}\colon e \in E_1)\colon u \in U_1\}$ is projectively equivalent to $U_2$.
    
     A \emph{representation} of $M$ is an $\bF$-matrix $A$ whose row space is projectively equivalent to $U$ (that is, its rowspace is $U$ after some set of nonzero column scalings); we write $M = M(A)$. For each $X \subseteq E$, we write $r(X)$ for the dimension of the subspace $U[X]$, or equivalently $\rank(A[X])$ for any representation $A$ of $M$. Note that $r(\cdot)$ is invariant under projective equivalence so is well-defined. The pair $\tilde{M} = (E,r)$ is a matroid in the usual sense, and we call this the \emph{abstract matroid} associated with $M$; an abstract matroid $N$ is thus $\bF$-representable if and only if there is some $\bF$-represented matroid $M$ with $N = \tilde{M}$. 
    
    From here on we will be working with represented matroids exclusively, abbreviating them as just \emph{matroids}. To be precise, we define $\cG(\Gamma)^t$ in this new context to be the class of $\bF$-represented matroids of the form $M\sqbinom{P}{A}$ for some $\Gamma$-frame matrix $A$ and some matrix $P$ with at most $t$ rows. 
    
    The \emph{dual} of a represented matroid $M = (E,U)$ is defined to be $M^* = (E,U^{\perp})$, and for a set $X \subseteq E$ we define $M \del X = (E-X,U[E-X])$ and $M \con X = (M^* \del X)^*$ and define minors of $M$ accordingly; these are well-defined, and agree with the usual notions of minors and duality in the abstract matroid. An \emph{extension} of a represented matroid $M$ is a matroid $M^+$ such that $M^+ \del e = M$ for some $e \in E(M^+)$, or equivalently a matroid having a representation obtained from one of $M$ by appending a new column. Any invariant property or parameter of abstract matroids can easily be extended to represented matroids, and we define (co-)simplicity, (co-)simplification, the parameter $\elem(M)$, and the extremal function $h_{\cM}$ for a class $\cM$ of represented matroids in the obvious way. 

    \subsection*{Connectivity}
    
    Write $\lambda_M(A) = r_M(A) + r_M(E(M)-A) - r(M)$ for each $A \subseteq E(M)$. For $k \in \nni$, a matroid $M$ of rank at least $k$ is \emph{vertically $k$-connected} if for every partition $(A,B)$ of $E(M)$ with $\lambda_M(A) < k-1$, either $A$ or $B$ is spanning in $M$. (This definition is somewhat nonstandard but equivalent to the usual one.) 
    We require a theorem from [\ref{gn}], which  roughly states that the highly-connected matroids exemplify the densest members of any quadratic class. The version we state is both simplified and specialised to matroids over prime fields. 
    
    \begin{theorem}\label{connreduction}
        Let $\bF$ be a prime field and let $f(x)$ be a real quadratic polynomial with positive leading coefficient. If $\cM$ is a quadratic minor-closed class of $\bF$-represented matroids with $h_{\cM}(n) > f(n)$ for infinitely many $n \in \nni$, then for every $k \in \nni$ there is a vertically $k$-connected matroid $M \in \cM$ with $r(M) \ge k$ and $\elem(M) > f(r(M))$. 
    \end{theorem}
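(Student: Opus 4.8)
The plan is to establish the contrapositive by a density‑reduction argument along low‑order vertical separations. Fix $k \in \nni$ and suppose, for contradiction, that every vertically $k$-connected $N \in \cM$ with $r(N) \ge k$ satisfies $\elem(N) \le f(r(N))$. I will deduce that $h_{\cM}(n) \le f(n)$ for all sufficiently large $n$, contradicting the hypothesis. Concretely, I would prove by strong induction on $n$ that $h_{\cM}(n) \le f(n)$ whenever $n \ge n_0$, for a constant $n_0 = n_0(\cM,f,k)$ chosen large enough for the estimates below.

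The only arithmetic input is a superadditivity property of $f$. Write $\alpha' > 0$ for its leading coefficient. If $a + b = m + \lambda$ with $0 \le \lambda \le k-2$, then $f(m) - f(a) - f(b) = 2\alpha'ab - 2\alpha'\lambda(a+b) + O_{f,k}(1)$; hence $f(a) + f(b) \le f(m)$ whenever $\min(a,b) \ge c_0(f,k)$, since the quadratic cross term swamps the bounded overhead coming from $\lambda$. On the other hand, if $a \le b < m$ then $f(m) - f(b) = (m-b)\bigl(\alpha'(m+b) + O_f(1)\bigr) \ge \tfrac12\alpha' m$ once $m$ is large, because $m - b \ge 1$.

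For the inductive step, let $M \in \cM$ be simple with $r(M) = n \ge n_0$. If $M$ is vertically $k$-connected, then $\elem(M) \le f(n)$ by the supposition, using $n \ge k$. Otherwise $M$ admits a partition $(A,B)$ of $E(M)$ with $\lambda_M(A) \le k-2$ and with neither $A$ nor $B$ spanning; among all such separations I would pick one minimizing $r_M(A)$. Set $a = r_M(A) \le b = r_M(B)$, so $a + b = n + \lambda_M(A)$ and $a,b < n$. As $M$ is simple, $M|A$ and $M|B$ are simple proper minors of $M$ lying in $\cM$, and $\elem(M) = \elem(M|A) + \elem(M|B)$. If $a \ge n_0$, then applying the inductive hypothesis to $M|A$ and $M|B$ and then the superadditivity estimate gives $\elem(M) \le f(a) + f(b) \le f(n)$. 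If $a < n_0$, the difficulty is that $M|A$ has rank below the induction threshold; when $a \le c_1$ for a suitable constant $c_1 = c_1(\cM,f,k)$ this is harmless, since then $\elem(M|A) \le h_{\cM}(c_1)$ is a constant, $b = n + \lambda_M(A) - a \ge n_0$, and the inductive hypothesis on $M|B$ together with $f(n) - f(b) \ge \tfrac12\alpha' n$ gives $\elem(M) \le h_{\cM}(c_1) + f(b) \le f(n)$ for $n$ large. This would complete the induction, and hence the theorem, provided the remaining sub-case $c_1 < a < n_0$ can be excluded.

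That exclusion is the step I expect to be the main obstacle: in the intermediate sub-case the crude estimate $\elem(M|A) \le h_{\cM}(a) = O(a^2)$ is not absorbed by the linear slack $f(n) - f(b) = \Theta(n)$ when $n$ is only just above $n_0$. I would try to rule it out by choosing $(A,B)$ so that $M|A$ is itself vertically $k$-connected — equivalently, iterating: if $M|A$ fails to be vertically $k$-connected, refine $(A,B)$ using a low-order vertical separation internal to $M|A$, which forces $r_M(A)$ down. The catch is that refining separations can inflate their order, so one must argue this stays under control (e.g. by working with a slightly larger connectivity parameter), the goal being the dichotomy `either $r_M(A)$ is bounded in terms of $k$, or $M|A$ is vertically $k$-connected and may be returned to the induction directly'. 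The hypothesis that $\bF$ is prime enters here, as in the growth‑rate machinery: it bounds line lengths and controls how vertical connectivity interacts with $\bF$-representability, which is what prevents the refinement from degrading. Making this dichotomy precise is the technical heart of the argument.
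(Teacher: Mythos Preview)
The paper does not prove this theorem; it is quoted, in simplified and specialised form, from Geelen and Nelson [\ref{gn}]. So there is no in-paper argument to compare against, but your attempt can still be assessed on its merits.

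Your overall strategy---contrapositive plus strong induction along low-order vertical separations, exploiting the superadditivity of $f$---is the natural first attack, and your arithmetic in the two clean cases (both sides above threshold; smaller side bounded by a constant) is fine. You are also right that the intermediate sub-case is a genuine obstruction and not an artefact of bookkeeping: when $c_1 < a < n_0$, the crude bound $\elem(M|A) \le h_{\cM}(a) = O(a^2)$ can be of order $n_0^2$, while the slack $f(n)-f(b) = \Theta(n)$ is only of order $n$; since the induction must already cover $n = n_0$, these cannot be balanced by enlarging $n_0$. Compare the proof of Lemma~\ref{equalityhc} immediately following the theorem: there the analogous difficulty is handled with two thresholds $n_0$ and a much larger $n_1 = n_1(n_0)$, which works because the hypothesis $h_{\cM}(n) = f(n)$ is \emph{assumed} for all $n \ge n_0$. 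In your situation that hypothesis is exactly what you are trying to prove, so the two-threshold device is unavailable and the induction does not close.

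Your proposed repair---iteratively refining $(A,B)$ until $M|A$ is itself vertically $k$-connected---is not developed enough to be convincing: as you yourself note, refinement can raise the order of the separation, and nothing in the sketch prevents this or forces $r_M(A)$ down to a bounded value. Nor does the prime-field hypothesis help here; the paper says explicitly that its statement is a \emph{specialisation} of the result in [\ref{gn}], so the reduction itself does not rely on $\bF$ being prime. The argument in [\ref{gn}] is genuinely more involved than a straight rank induction on restrictions, and you should consult it rather than try to push the present outline through.
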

	
	To obtain the equality characterisation in Theorem \ref{mainsimple} as well as the bound, we need a lemma that is a variant of the above.
	
	\begin{lemma}\label{equalityhc}
		Let $\bF$ be a finite field and $f(x)$ be a real quadratic polynomial with positive leading coefficient, and let $k \in \nni$. If $\cM$ is a restriction-closed class of $\bF$-represented matroids and $h_{\cM}(n) = f(n)$ for all sufficiently large $n$, then for all sufficiently large $r$, every rank-$r$ matroid $M \in \cM$ with $\elem(M) = f(r)$ is vertically $k$-connected.
	\end{lemma}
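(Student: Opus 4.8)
The plan is to argue by contradiction, reducing the failure of vertical $k$-connectivity to a numerical inequality that a quadratic extremal function cannot satisfy at large rank. We may assume $k \ge 2$, since every matroid of rank at least $k$ is trivially vertically $k$-connected when $k \le 1$. Let $N_0$ be a threshold with $h_{\cM}(n) = f(n)$ for all $n \ge N_0$, write $f(x) = ax^2 + bx + c$ with $a > 0$, and recall that $h_{\cM}$ is non-decreasing, so $f$ is non-decreasing on the integers $\ge N_0$. Suppose, for a contradiction, that $M \in \cM$ has large rank $r = r(M)$, satisfies $\elem(M) = f(r)$, but is not vertically $k$-connected. Since $r$ is large we have $r \ge k$, so there is a partition $(X,Y)$ of $E(M)$ with $\lambda_M(X) \le k-2$ (as $\lambda_M$ is integer-valued) and with neither $X$ nor $Y$ spanning. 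Setting $r_1 = r_M(X)$ and $r_2 = r_M(Y)$, we get $r_1, r_2 \le r-1$, and from $0 \le \lambda_M(X) = r_1 + r_2 - r \le k-2$ we obtain $r \le r_1 + r_2 \le r + k - 2$.

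The next step is a counting bound. In any representation of $M$, every nonzero column is indexed either in $X$ or in $Y$, so each rank-$1$ flat of $M$ has a representative among the columns of $M \del Y$ or among those of $M \del X$; hence $\elem(M) \le \elem(M \del Y) + \elem(M \del X)$. Since $\cM$ is restriction-closed and simplification is a restriction, $\si(M \del Y)$ and $\si(M \del X)$ are simple members of $\cM$ of ranks $r_1$ and $r_2$, so
\[ \elem(M) \le h_{\cM}(r_1) + h_{\cM}(r_2). \]
It then suffices to show that the right-hand side is strictly less than $f(r)$ whenever $r$ is large.

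To do this, assume without loss of generality that $r_1 \ge r_2$. Because $r_1 + r_2 \ge r$, we have $r_1 \ge r/2$, so once $r \ge 2N_0$ we get $h_{\cM}(r_1) = f(r_1) \le f(r-1)$, using $N_0 \le r_1 \le r-1$ and monotonicity of $f$ there. For the term $h_{\cM}(r_2)$ I would split on the size of $r_2$. If $r_2 \le \max(N_0, k-1)$, then $h_{\cM}(r_2)$ is bounded by the constant $\kappa_1 := \max\{h_{\cM}(j) : 0 \le j \le \max(N_0,k-1)\}$. If instead $r_2 > \max(N_0, k-1)$, then both terms equal $f(r_i)$; using convexity of $f$, replacing the pair $(r_1,r_2)$ by the pair $(r-1,\ r_1+r_2-(r-1))$, which has the same sum but is (weakly) more spread out since $r_1 \le r-1$, can only increase $f(r_1)+f(r_2)$, and since $1 \le r_1 + r_2 - (r-1) \le k-1$ we obtain $f(r_1)+f(r_2) \le f(r-1) + \kappa_2$ with $\kappa_2 := \max\{f(j): 1 \le j \le k-1\}$. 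Either way $\elem(M) \le f(r-1) + C$ for a constant $C = C(\cM, k)$. But $f(r) - f(r-1) = a(2r-1) + b \to \infty$, so for $r$ large enough $f(r) > f(r-1) + C \ge \elem(M)$, contradicting $\elem(M) = f(r)$; this yields the desired bound on how large $r$ must be.

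The main obstacle is precisely the last paragraph: because $h_{\cM}$ only coincides with $f$ above the threshold $N_0$, a separation could in principle leave one side of bounded rank, where $h_{\cM}$ is not controlled by $f$. The resolution is that the complementary side then has rank at least $r/2$, hence large, and contributes only $f(r-1)$ — which already beats $f(r)$ by the growing margin $a(2r-1)+b$ — while the small side contributes a bounded constant. The only remaining subtlety is the convexity/majorization bookkeeping when both sides of the separation have large rank, which is routine.
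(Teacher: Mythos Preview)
Your proof is correct and follows essentially the same route as the paper's: obtain a separation $(X,Y)$ with neither side spanning, bound $\elem(M)$ by $h_{\cM}(r_1)+h_{\cM}(r_2)$, observe that the larger side has rank at least $r/2$ so contributes at most $f(r-1)$, and split on whether the smaller side has bounded or large rank. The only noteworthy difference is in the second case: the paper expands $f(r_1)+f(r_2)=a(r_1^2+r_2^2)+b(r_1+r_2)+2c$ explicitly and compares with $f(r)$, whereas you use the convexity of $f$ and a majorization step to replace $(r_1,r_2)$ by $(r-1,\,r_1+r_2-r+1)$ with $1 \le r_1+r_2-r+1 \le k-1$. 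Your argument is a little cleaner and avoids tracking constants like $n_0 \ge 2k+a^{-1}$; the paper's version has the minor advantage of giving an explicit threshold for $r$.
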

	\begin{proof}
		Say $f(x) = ax^2 + bx + c$ where $a,b,c \in \bR$ and $a > 0$. Set $n_0 \in \nni$ so that $n_0 \ge 2k+a^{-1}$, while $h_{\cM}(n) = f(n)$ for all $n \ge n_0$, and $f$ is increasing on $[n_0,\infty)$. Let $n_1 = \max(2n_0,f(k), \tfrac{1}{2a}(|\bF|^{n_0} + a-b))$; we show that every $M \in \cM$ with $r(M) \ge n_1$ and $\elem(M) = f(r(M))$ is vertically $k$-connected.  
		
		Let $M \in \cM$ satisfy $r(M) = r \ge n_1$ and $\elem(M) = f(r)$. If $M$ is not vertically $k$-connected, then there is a partition $(A,B)$ of $E(M)$ for which $1 \le r_M(A) \le r_M(B) \le r-1$ and $r \le r_M(A) + r_M(B) < r + k$. Let $r_B = r_M(B)$ and $r_A = r_M(A)$; note that $r_B \ge \tfrac{r}{2} \ge n_0$ so we have $\elem(M|B) \le f(r_B) \le f(r-1) = a(r-1)^2 + b(r-1) + c$. 
		
		If $r_A \le n_0$ then $\elem(M|A) < |\bF|^{n_0}$ so 
		\begin{align*}
		ar^2 + br + c &= \elem(M) \\
		&\le \elem(M|A) + \elem(M|B) \\
		&< |\bF|^{n_0} + a(r-1)^2 + b(r-1) + c.
		\end{align*}
		This implies that $r < \tfrac{1}{2a}\left(|\bF|^{n_0} +a-b\right) \le n_1$, a contradiction. 
		
		If $r_A > n_0$ then $\elem(M|A) \le f(r_A)$ and we have
		\begin{align*}
		f(r) &\le \elem(M|A) + \elem(M|B) \\
		&\le a(r_A^2 + r_B^2) + b(r_A + r_B) + 2c\\
		&= a(r_A + r_B)^2 + b(r_A + r_B) + 2c - 2ar_Ar_B\\
		&< a(r+k)^2 + b(r+k) + 2c - 2an_0 (\tfrac{r}{2})\\
		&= f(r) + f(k) + ra(2k-n_0),
		\end{align*}
		where we use $r_A \ge n_0$ and $r_B \ge \tfrac{r}{2}$. This gives $ra(n_0-2k) < f(k)$ so, using $n_0 \ge 2k+a^{-1}$, we have $r < f(k) \le n_1$, again a contradiction.  
	\end{proof}

\section{Frame matroids and extensions}\label{dowlingsection}

In this section we define the extremal matroids in classes $\cG(\Gamma)^t$, and consider certain slightly larger classes.
Let $\bF$ be a finite field, $\Gamma$ be a subgroup of $\bF^*$ and $n \in \nni$. Let $B_0$ be an $n$-element set and $b_1, \dotsc, b_n$ be the unit vectors in $\bF^{B_0}$. Let \[W(n) = \{b_i \colon i \in [n]\} \cup \{-b_i + \gamma b_j\colon i,j \in [n], i < j, \gamma \in \Gamma\},\]
and $A \in \bF^{B_0 \times E}$ be a matrix whose set of columns is $W(n)$. We write $\DG(n,\Gamma)$ for any matroid isomorphic to $M(A)$ (this is a \emph{Dowling geometry} over $\Gamma$), and call any matrix obtained from such an $A$ by column scalings a \emph{standard} representation for $\DG(n,\Gamma)$. Given any $\Gamma$-frame matrix $A' \in \bF^{B \times F}$ with $\rank(A') = n$, we can remove redundant rows, rename rows, and rescale columns to obtain a matrix whose columns are all in $W(n)$; it follows that any rank-$n$ extremal matroid in $\cG(\Gamma)$ is isomorphic to $\DG(n,\Gamma)$. 

For each $t \in \nni$ and $n \ge t$, let $X$ be a $t$-element set and $B_0$ be an $(n-t)$-element set, and $A^t \in \bF^{(B_0 \cup X) \times E}$ be a matrix whose set of columns is 
\[W^t(n) =  (\bF^X \times W(n-t)) \cup (U \times \{0_{B_0}\}),\]
 where $U$ is a maximal set of pairwise non-parallel nonzero vectors in $\bF^X$. We write $\DG(n,\Gamma)^t$ for any matroid isomorphic to $M(A^t)$ for such an $A^t$, and call any matrix obtained from such an $A^t$ by column scalings a \emph{standard} representation of $\DG(n,\Gamma)^t$. It is not hard to check that, given a standard representation, rescaling rows in $B_0$ by elements of $\gamma$ yields another standard representation. Moreover, given any rank-$n$ matroid $M \in \cG(\Gamma)^t$ with $n \ge t$, we can remove/append/rename rows then rescale columns to find a representation for $M$ whose columns are all in $W^t(n)$; therefore every rank-$n$ extremal matroid in $\cG(\Gamma)^t$ is isomorphic to $\DG(n,\Gamma)^t$. We can thus determine the extremal function $h_{\cG(\Gamma)^t}(n) = |W^t(n)|$; we have $|W(n)| = |\Gamma|\binom{n}{2} + n$ and so \[h_{\cG(\Gamma)^t}(n) = |W^t(n)| = |\bF|^t |W(n-t)| + \tfrac{|\bF|^t-1}{|\bF|-1} = f_{|\bF|,|\Gamma|,t}(n),\] 
which justifies our earlier claims. 

The next lemma's proof uses the fact that $\cG(\Gamma)$ is minor-closed. 

\begin{lemma}\label{findpg}
	Let $\bF = \GF(q)$ be a finite field and $\Gamma \le \bF^*$. For all $t \in \nni$, the class $\cG(\Gamma)^t$ is minor-closed.
\end{lemma}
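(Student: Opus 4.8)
The plan is to show that $\cG(\Gamma)^t$ is closed under the three fundamental operations: deletion, contraction, and (trivially) isomorphism. Since every minor is obtained by a sequence of deletions and contractions, closure under these two operations suffices. Fix a matroid $M = M\sqbinom{P}{A}$ in $\cG(\Gamma)^t$, where $A \in \bF^{B \times E}$ is a $\Gamma$-frame matrix and $P$ has at most $t$ rows, say $P \in \bF^{X \times E}$ with $|X| \le t$.

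\emph{Deletion.} For $e \in E$, deleting the column indexed by $e$ from $\sqbinom{P}{A}$ simply deletes that column from both $P$ and $A$. The resulting $A$ is still a $\Gamma$-frame matrix (deleting columns cannot destroy the property that each remaining column is a zero, unit, or $\gamma e_j - e_i$ vector), and $P$ still has at most $t$ rows, so $M \del e \in \cG(\Gamma)^t$. This step is immediate.

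\emph{Contraction.} This is the main obstacle, and the proof hinges on the fact that $\cG(\Gamma)$ itself is minor-closed (the cited reference [\ref{zaslav}]; note the class $\cG(\Gamma) = \cG(\Gamma)^0$ over a fixed $\bF$). The strategy is: to contract $e$, first arrange the representation so that $A$ has a convenient form, then perform the row operation that contraction corresponds to, and finally re-absorb the damage into the $P$-block. Concretely, if $A[\{e\}] = 0_B$ then $e$ is spanned by the $X$-rows alone, and contracting $e$ amounts to a row operation within $X$ plus deleting $e$; the $A$-block is untouched, and we need only check the $P$-block still has $\le t$ rows after pivoting and deleting a row, which it does. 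If $A[\{e\}] \ne 0_B$, pick a row $i \in B$ with $A[\{i\},\{e\}] \ne 0$; pivoting on the $(i,e)$ entry and then deleting row $i$ and column $e$ realizes $M \con e$. The pivot is a $\GF(\bF)$-elementary transformation: it adds multiples of column $e$ to other columns and scales. The key claim is that after this pivot-and-delete, the bottom block obtained from $A$ is \emph{projectively equivalent} to a matrix of the form $\sqbinom{P'}{A'}$ where $A'$ is again a $\Gamma$-frame matrix on a smaller row set, possibly at the cost of moving finitely many rows up into the top block — but crucially only \emph{boundedly} many. Here is where one must be careful: a single pivot in a frame matrix can turn weight-$2$ columns into higher-weight columns, so $A'$ alone need not be frame; however, the rows of $B$ that become ``bad'' are exactly those meeting the support of column $e$, of which there are at most $2$. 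So one moves at most $2$ rows of $B$ up into the $P$-block. This would only give $\cG(\Gamma)^{t+2}$, not $\cG(\Gamma)^t$, so a more careful argument is needed.

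The correct route, and the reason the lemma cites minor-closedness of $\cG(\Gamma)$, is to reduce to the $t=0$ case: write $\sqbinom{P}{A}$ and observe that contracting $e$ commutes appropriately — one shows $M\sqbinom{P}{A} \con e$ has a representation $\sqbinom{P'}{A}$ where $A$ is \emph{unchanged} (when $e$ can be chosen so that $A[\{e\}] = 0$, i.e.\ $e$ lies in the ``$P$-part'' of the ground set) and otherwise $e$ corresponds to a column of $A$, in which case $M \con e$ restricted to the frame coordinates is a contraction of the frame matroid $M(A)$, which lies in $\cG(\Gamma)$ by its minor-closedness, yielding a $\Gamma$-frame representation $A'$ of it; one then checks the top $t$ rows survive the change of basis with at most $t$ rows. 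Thus the plan is: (1) dispense with deletion and isomorphism trivially; (2) for contraction of $e$, split into the case $e$ in the frame part versus the projective part; (3) in the frame-part case invoke minor-closedness of $\cG(\Gamma)$ to replace $A$ by a $\Gamma$-frame matrix $A'$ representing $M(A)\con e$, and track that the compression of $P$ onto the new row basis still uses $\le t$ rows; (4) in the projective-part case handle it by a direct pivot argument entirely within the top block. I expect step (3) — bookkeeping the interaction between the change-of-basis forced by contraction in the frame block and the rows of $P$ — to be the delicate point, though conceptually it is just linear algebra once one commits to working with represented matroids as in the Preliminaries.
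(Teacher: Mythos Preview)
Your proposal is correct and matches the paper's approach: split on whether the frame-part $A[e]$ of column $e$ is zero, handle the zero case by pivoting within $P$ (dropping a row, so landing in $\cG(\Gamma)^{t-1} \subseteq \cG(\Gamma)^t$), and handle the nonzero case by invoking minor-closedness of $\cG(\Gamma)$. The step you flag as delicate is in fact immediate once you first add multiples of a row of $A$ that is nonzero at $e$ to each row of $P$, so that the new top block $P'$ satisfies $P'[e] = 0$; then contracting $e$ touches only the frame block (yielding a $\Gamma$-frame matrix $A'$ representing $M(A)\con e$), and $P'$ keeps its $t$ rows untouched.
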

\begin{proof}
	 Let $M \in \cG(\Gamma)^t$, so there exist $P \in \bF^{T \times E}$ with $t$ rows and a $\Gamma$-frame matrix $Q \in \bF^{B \times E}$, such that $M = M(A)$ for $A = \sqbinom{P}{Q}$. Let $e \in E$ be a nonloop of $M$. Clearly $M(A) \del e \in \cG(\Gamma)^t$. If $Q[e] = 0$ then we can perform row-operations within $P$ and remove a row of $P$ to contract $e$ and we have $M(A) \con e \in \cG(\Gamma)^{t-1} \subseteq \cG(\Gamma)^t$. If $Q[e] \ne 0$ then $A$ is row-equivalent to a matrix $\sqbinom{P'}{Q}$ for which $P' \in \bF^{T \times E}$ satisfies $P'[e] = 0$. Then $M(A) \con e = M\sqbinom{P'}{Q'}$ for some $\Gamma$-frame matrix $Q'$ that represents $M(Q) \con e$. Therefore $M(A) \con e \in \cG(\Gamma)^t$; it follows that $\cG(\Gamma)^t$ is minor-closed. 
\end{proof}


\subsection*{Extensions}

If $x \in \bF^* - \Gamma$, then let $\DG^{(x)}(n,\Gamma)^t$ denote a matroid of the form $M(A|w)$, where $A \in \bF^{(X \cup B_0) \times E}$ is a standard representation of $\DG(n,\Gamma)^t$ and $w$ is a vector for which $w[X] = 0$ and $w[B_0]$ has weight $2$ and has nonzero entries $-1$ and $x$; this is a frame matroid over some subgroup $\Gamma'$ properly containing $\Gamma$. One can check that if $x$ and $x'$ lie in the same coset of $\Gamma$ in $\bF^*$, then $\DG^{(x)}(n,\Gamma)^t$ and $\DG^{(x')}(n,\Gamma)^t$ are isomorphic. Let $\DG^{\smallsq}(n,\Gamma)^t$ denote a matroid of the form $M(A|w)$, where $A$ is a standard representation of $\DG(n,\Gamma)^t$ and $w$ is the sum of three distinct unit vectors whose nonzero entries lie in $B_0$. From here on we write $\bF_p$ for the prime subfield of a finite field $\bF$.

\begin{lemma}\label{primesubfield}
	If $\Gamma$ is a subgroup of $\bF^*$ for some finite field $\bF$ and $\bF_p^* \not\subseteq \Gamma$, then $\DG^{\smallsq}(n+1,\Gamma)$ has a $\DG^{(x)}(n,\Gamma)$-minor for some $x \in \bF_p^* - \Gamma$. 
\end{lemma}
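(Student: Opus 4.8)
The plan is to realise $\DG^{(x)}(n,\Gamma)$, for a suitable $x\in\bF_p^*-\Gamma$, as a single-element contraction of $\DG^{\smallsq}(n+1,\Gamma)$. Fix a standard representation $A$ of $\DG(n+1,\Gamma)$ with row set $B_0=\{1,\dots,n+1\}$ and unit vectors $b_1,\dots,b_{n+1}$; since any permutation of $B_0$ induces an automorphism of $\DG(n+1,\Gamma)$, I may take the appended column of $\DG^{\smallsq}(n+1,\Gamma)$ to be $w=b_1+b_2+b_3$. The idea is that contracting a well-chosen element of $\DG(n+1,\Gamma)$ deletes one of the rows $1,2,3$ after a pivot; this collapses the weight-$3$ column $w$ to a weight-$2$ column while the frame elements collapse back onto a standard representation of $\DG(n,\Gamma)$ on the $n$ surviving rows. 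After rescaling the new column so that one of its two nonzero entries is $-1$, it only remains to ensure the other entry lies in $\bF_p^*-\Gamma$.

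First I would dispose of the case $-1\notin\Gamma$ by contracting the unit element $b_3$. Deleting row $3$ turns every $-b_i+\gamma b_3$ and every $-b_3+\gamma b_j$ into a scalar multiple of a unit vector and fixes the remaining frame elements, so this part simplifies to a standard representation of $\DG(n,\Gamma)$; meanwhile $w$ becomes $b_1+b_2$, which after scaling by $-1$ is the column $-b_1+(-1)b_2$. Since $-1\notin\Gamma$ this column is parallel to nothing, so $\si(\DG^{\smallsq}(n+1,\Gamma)\con b_3)=\DG^{(-1)}(n,\Gamma)$, and $-1\in\bF_p^*-\Gamma$ as needed.

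Now suppose $-1\in\Gamma$ (so $p\ge 5$, since $p\le 3$ would give $\bF_p^*=\{1,-1\}\subseteq\Gamma$). For $\gamma\in\Gamma$ with $\gamma\ne -1$ I would contract the frame element with column $-b_1+\gamma b_2$. A pivot on row $1$ followed by deletion of that row sends $b_1$ to a multiple of $b_2$, sends each $-b_1+\delta b_j$ $(j\ge 3)$ to $\gamma$ times $-b_2+\gamma^{-1}\delta b_j\in W(n)$, fixes every frame element avoiding row $1$, and sends $w$ to $(1+\gamma)b_2+b_3$; rescaling the $w$-column by $-(1+\gamma)^{-1}$ makes it $-b_2+xb_3$ with $x=-(1+\gamma)^{-1}$. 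Hence this contraction simplifies to $\DG^{(x)}(n,\Gamma)$ whenever $x\notin\Gamma$, and since $\gamma\in\bF_p$ we have $x\in\bF_p^*$, while, using $-1\in\Gamma$, the condition $x\notin\Gamma$ is equivalent to $1+\gamma\notin\Gamma$. So it suffices to find $\gamma\in H:=\Gamma\cap\bF_p^*$ with $\gamma\ne -1$ and $1+\gamma\notin H$.

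This last point carries the real content, and I expect the case $-1\in\Gamma$ — and specifically this elementary fact about subgroups of $\bF_p^*$ — to be the main obstacle. Here $H$ is a proper subgroup of $\bF_p^*$, proper precisely because $\bF_p^*\not\subseteq\Gamma$. If no such $\gamma$ existed then $1+\gamma\in H$ for every $\gamma\in H\setminus\{-1\}$, so starting from $1\in H$ and iterating — the step from $k$ to $k+1$ being valid exactly because $k\ne p-1$ for $1\le k\le p-2$ — would force $\{1,\dots,p-1\}\subseteq H$, i.e.\ $H=\bF_p^*$, a contradiction; hence the required $\gamma$ exists. What remains is routine bookkeeping: confirming that in each of the two cases the contracted-and-simplified matrix is genuinely a standard representation of $\DG(n,\Gamma)$ together with exactly one extra column, and that this extra column is not parallel to any other (which holds precisely when the corresponding $x$ lies outside $\Gamma$).
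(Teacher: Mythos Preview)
Your proposal is correct and follows essentially the same approach as the paper: split into the cases $-1\notin\Gamma$ (contract a unit column to get $x=-1$) and $-1\in\Gamma$ (contract a suitable frame column $-b_1+\gamma b_2$ with $\gamma\in\Gamma\cap\bF_p^*$, $\gamma\ne -1$, $1+\gamma\notin\Gamma$). The only cosmetic differences are that the paper normalises the new weight-$2$ column to obtain $x=-1-\gamma$ rather than your $x=-(1+\gamma)^{-1}$, and that the paper asserts the existence of the required $\gamma$ in one line whereas you spell out the iterative argument that $H=\Gamma\cap\bF_p^*$ would otherwise absorb all of $\bF_p^*$.
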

\begin{proof}
	Let $[A|w] \in \bF^{(X \cup B_0) \times (E \cup \{e\})}$ be a representation of $\DG^{\smallsq}(n+1,\Gamma)$ for which $A$ is a standard representation of $\DG(n+1,\Gamma)$ and $w = A[e]$ is the sum of three distinct unit vectors supported on $B_0$. 
	
	Let $r_1,r_2,r_3 \in B_0$ be the rows on which $w$ is nonzero. Let $E' \subseteq E$ be a set so that $A[r_1,E'] = 0$ and $A' = A[X \cup B_0 - r_1,E']$ is a standard representation of $\DG(n,\Gamma)$. If $-1 \notin \Gamma$, then contracting the unit column supported on $r_1$ and restricting to $E'$ yields a representation of a $\DG^{(-1)}(n,\Gamma)$-minor of $M$, as required. So we may assume that $-1 \in \Gamma$. 
	
	Since $\bF_p^* \not\subseteq \Gamma$ and $1 \in \Gamma$, there is some $\gamma \in \Gamma \cap \bF_p^*$ for which $\gamma \ne -1$ and $\gamma + 1 \notin \Gamma$. Consider a minor $M'$ of $M$ obtained by contracting a column $c$ of $A$ supported on $\{r_1,r_2\}$ for which $c[r_2] = -\gamma c[r_1]$, then restricting to $E'$. Now $M' = M(A'|w')$ where $w'$ has weight $2$ and has nonzero entries $1+\gamma$ and $1$. Thus $M' \cong \DG^{(-1-\gamma)}(n,\Gamma)$. If $-1-\gamma \notin \Gamma$ then the result holds; thus we may assume that $-1-\gamma \in \Gamma$ and so $(-1)(-1-\gamma) = 1 + \gamma \in \Gamma$, a contradiction.  
\end{proof}

\begin{lemma}\label{dowlingextension}
	Let $m \in \nni$, let $\bF$ be a finite field and $\Gamma$ be a subgroup of $\bF^*$. If $n \in \nni$ satisfies $n \ge |\bF|^2m + t + 3$ and $M$ is a simple rank-$n$ $\bF$-represented matroid that is an extension of $\DG(n,\Gamma)^t$, then either
	\begin{itemize}
		\item $M$ has a $\DG^{(x)}(m,\Gamma)^t$-minor for some $x \notin \Gamma$, or
		\item $\bF_p^* \subseteq \Gamma$ and $M$ has a $\DG^{\smallsq}(m,\Gamma)^t$-minor. 
	\end{itemize}
\end{lemma}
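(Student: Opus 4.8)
The plan is to start from a simple rank-$n$ extension $M = M(A \mid w)$ of $\DG(n,\Gamma)^t$, where $A$ is a standard representation with rows indexed by $X \cup B_0$ (with $|X| = t$, $|B_0| = n-t$) and $w$ is the new column. First I would put $w$ into a convenient normal form by row-scaling (which preserves standardness up to scaling rows of $B_0$ by elements of $\Gamma$, as noted after the definition of $\DG(n,\Gamma)^t$) and column-scaling. The key combinatorial object is the support of $w$ restricted to $B_0$; since $w$ together with the columns of $A$ must form a \emph{simple} matroid properly extending $\DG(n,\Gamma)^t$, the column $w$ cannot already be a column of $W^t(n)$, so either $w[B_0]$ has weight at least $3$, or $w[B_0]$ has weight $2$ with a nonzero entry ratio not in $\Gamma$ (up to the overall column scaling), or $w[B_0]$ has weight $\le 1$ but then $w$ lives in a rank-$(t+1)$ "projective part" and simplicity plus $n$ large forces a contradiction or an easy reduction. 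I would handle these cases separately.

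In the weight-$2$ case where $w[B_0]$ has nonzero entries in ratio $x \notin \Gamma$ (after scaling so one entry is $-1$, the other is $x \in \bF^* - \Gamma$), I would show directly that $M$ has a $\DG^{(x)}(m,\Gamma)^t$-minor: pick an $m$-element subset $B_1 \subseteq B_0$ containing the two support rows of $w[B_0]$, delete all columns of $A$ not supported inside $X \cup B_1$ together with all columns needed to trim the $X$-part down, and contract a short path of frame columns through $B_0 - B_1$ — here $n \ge |\bF|^2 m + t + 3$ gives ample room to route the contraction and to keep the resulting representation standard on the remaining rows. The bookkeeping is that after contracting we must recover a standard representation of $\DG(m,\Gamma)^t$ on $X \cup B_1$ with the extra column still of the prescribed $\DG^{(x)}$-form; this is routine frame-matroid manipulation analogous to the argument in Lemma~\ref{findpg}.

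In the weight-$\ge 3$ case, the goal is to contract frame columns to collapse the support of $w$ down to exactly three rows while keeping the rest of the Dowling geometry intact, which produces a $\DG^{\smallsq}(m,\Gamma)^t$-minor; then if $\bF_p^* \not\subseteq \Gamma$ I invoke Lemma~\ref{primesubfield} (after first passing to a $\DG^{\smallsq}(m+1,\Gamma)$-type minor to make the lemma applicable) to upgrade this to a $\DG^{(x)}(m,\Gamma)^t$-minor with $x \in \bF_p^* - \Gamma \subseteq \bF^* - \Gamma$, landing in the first outcome. Collapsing the support of $w$ is done one row at a time: if $w$ is nonzero on rows $r, s$ (among others) in $B_0$, contract the frame column $\gamma b_r - b_s$ with $\gamma$ chosen so that $w[r]$ and $\gamma^{-1} w[s]$ combine appropriately after the contraction identifies $r$ with $s$; iterating reduces the weight by one each time, and the restriction-and-renaming at the end recovers a standard $\DG(m,\Gamma)^t$ representation — again $n$ large ensures enough spare rows. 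The main obstacle I anticipate is the weight-$\ge 3$ collapsing step: one must verify that the choices of contraction columns can be made consistently so that (i) each contraction genuinely decreases the weight rather than creating cancellation that kills $w$ entirely or drops its weight below what we want, (ii) the columns being contracted actually exist in the Dowling geometry as parallel classes permit, and (iii) the $X$-rows and the surviving $B_0$-rows still carry a full standard representation. Handling the possibility that intermediate contractions force $-1 \in \Gamma$ or other coincidences — exactly the kind of case split seen in Lemma~\ref{primesubfield} — is where the real care is needed, and I would organise it by first reducing to $w[X] = 0$ (contracting through the projective part if necessary) and then arguing purely inside the frame part.
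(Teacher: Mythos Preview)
Your overall plan --- reduce to $w[X]=0$, split on the weight of $w[B_0]$, handle weight two directly, and in the weight-$\ge 3$ case collapse to a weight-three extension before invoking Lemma~\ref{primesubfield} --- matches the paper's. The gap is in your collapsing step. You propose to contract a frame column between two support rows at each stage, dropping the weight by one; but each such contraction also deletes a row of $B_0$. If $w[B_0]$ has weight $k$ you need $k-3$ contractions, leaving only $(n-t)-(k-3)$ rows of $B_0$, and since $k$ can be as large as $n-t$ this can be as small as $3$ --- nowhere near the $m-t$ rows needed to carry a $\DG(m,\Gamma)^t$. Your assertion that ``$n$ large ensures enough spare rows'' is false here: the hypothesis bounds $n$, not $k$. (Over $\GF(2)$ with $w[B_0]=\one{B_0}$ things are worse still: every such contraction drops the weight by two, and weight three is unreachable by this route.)

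The missing idea is a characteristic-$p$ cancellation. The paper fixes three support rows $R$, applies pigeonhole on the remaining $\ge |\bF|^2 m$ rows of $B_0$ to find $p(m-2)$ rows on which $w$ is constant (equal to some $\alpha$), partitions them into $p$ blocks $B^0,\dotsc,B^{p-1}$ of size $m-2$, and contracts frame columns matching each $B^i$ onto $B^0$. Each row of $B^0$ then carries $p\alpha = 0$, so the support of $w$ collapses to $R$ while the $m-2$ rows of $B^0$ survive regardless of how heavy $w$ was. Passing to rows $X\cup B^0\cup R$ gives the desired weight-three extension of $\DG(m+1,\Gamma)^t$, after which your endgame applies. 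A smaller correction: the reduction to $w[X]=0$ is done by row operations (add multiples of a row $r\in B_0$ with $w[r]\ne 0$ to each row in $X$), not by contraction, which would shrink $t$.
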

\begin{proof}
	Let $e$ satisfy $M \del e \cong \DG(n,\Gamma)^t$.  Let $A \in \bF^{(X \cup B_0) \times E}$ be a standard representation of $\DG(n,\Gamma)^t$ for which $M = M(A|w)$ for some $w \in \bF^{X \cup B_0}$; since $w$ is not parallel to a column of $A$, we may assume that either $w[B_0]$ has weight at least $3$, or that $w[B_0]$ has weight $2$ and its two nonzero entries $\alpha$ and $\beta$ satisfy $-\alpha\beta^{-1} \notin \Gamma$. Let $r \in B_0$ be such that $w[r] \ne 0$; by adding multiples of $r$ to the rows in $X$ we obtain a matrix $[A'|w']$ row-equivalent to $[A|w]$ for which $A'[B_0] = A[B_0]$ and $w'[X] = 0$; since $M(A) = M(A')$ we see that $A'$ is also a standard representation of $\DG(n,\Gamma)^t$. We may thus assume that $w[X] = 0$. 
	
	If $w[B_0]$ has weight $2$, then we can scale $w$ to obtain a weight-two column $\wh{w}$ whose nonzero entries are $-1$ and $x = -\alpha\beta^{-1} \notin \Gamma$. Since $m \le n$, the matrix $[A|\wh{w}]$ has a submatrix $[A'|w'] \in \bF^{(B_0' \cup X) \times E'}$ for which $w'$ is a weight-$2$ subvector of $\wh{w}$, and $A'$ is a standard representation of $\DG(m,\Gamma)^t$. We have $M[A'|w'] \cong \DG^{(x)}(m,\Gamma)$. But all unit vectors in $\bF^{B_0 \cup X}$ are columns of $A$, so $M[A'|w']$ is a minor of $M$, giving the result. 
	
	Otherwise, let $R$ be a three-element subset of $B_0$ for which $w[R]$ has no zero entries. By a majority argument, there is a $p(m-2)$-element subset $B$ of $B_0-R$ for which $w[B]$ is constant; say all its entries are $\alpha \in \bF$. Let $(B^0,B^1, \dotsc, B^{p-1})$ be a partition of $B$ into equal-sized sets. There are disjoint $(m-2)$-element subsets $C^1, \dotsc, C^p$ of $E$ for which $A[X,C^i] = 0$ and $A[B^0 \cup B^i,C^i] = \sqbinom{-J^i}{J^0}$ for each $i \in \{1, \dotsc, p-1\}$, where $J^i \in \bF^{B^i \times C^i}$ is a bijection matrix. Let $E_0 \subseteq E$ be a set for which $A[B^i,E_0] = 0$ for each $i \ge 1$, and the matrix $A_0 \in \bF^{(B^0 \cup R \cup X) \times E_0}$ is a standard representation of $\DG(m+1,\Gamma)^t$.
	By construction of the $B^i$ and $C^i$, we have $(M \con \cup_{i=1}^{p-1} C_i)|(F \cup \{e\}) = M(A_0|w_0)$, where $w_0[R] = w[R]$ and each entry of $w_0[B_0]$ is the sum of $p$ copies of $\alpha$ so is zero; that is, $w_0$ has weight $3$. Let $M_0 = M(A_0|w_0)$. 
	
	Note that $r(M_0) = m+1$. Let $\beta_0,\beta_1$ and $\beta_2$ be the nonzero entries of $w_0$. We may assume by scaling that $\beta_0 = -1$. If $\beta_1 \notin \Gamma$ then removing the row containing $\beta_2$ yields a representation of a $\DG^{(\beta_1)}(m,\Gamma)^t$-minor of $M$, so we may assume that $\beta_1 \in \Gamma$ and, symmetrically, that $\beta_2 \in \Gamma$. By scaling the rows containing $\beta_1$ and $\beta_2$ by $\beta_1^{-1}$ and $\beta_2^{-1}$ respectively, we may assume that $\beta_1 = \beta_2 = 1$. If $-1 \notin \Gamma$ then removing the row containing $\beta_0$ yields a representation of a $\DG^{(-1)}(m,\Gamma)^t$-minor of $M$. If $-1 \in \Gamma$ then scaling the row containing $\beta_0$ by $-1$ yields a representation of $\DG^{\smallsq}(m+1,\Gamma)^t$. The result now follows easily from Lemma~\ref{primesubfield}. 
\end{proof}

For each $x$, let $\xpinch{t}$ denote the closure of $\{\DG^{(x)}(n,\Gamma)^t \colon n \ge t\}$ under minors and isomorphism, and define $\sqpinch{t}$ analogously.  
For each $d \in \nni$, let $\cG(\Gamma)^t_d$ denote the closure under minors of the class of matroids of the form $M[A | D]$, where $M(A) \in \cG(\Gamma)^t$ and $D$ has $d$ columns. It is clear by the definitions that $\cG(\Gamma)^t_1$ contains $\sqpinch{t}$ and $\xpinch{t}$ for all $x$. We will later require an easy lemma characterising matroids in $\cG(\Gamma)^t_d$. 
\begin{lemma}\label{extensionprojection}
	For $t,d \in \nni$, each matroid in $\cG(\Gamma)^t_d$ is a minor of a matroid having a representation $\sqbinom{P_2}{P_0}$, where $P_2$ has $t$ rows, and $P_0$ is a matrix for which there is a matrix $P_1$ with $d$ rows such that $\sqbinom{P_1}{P_0}$ is row-equivalent to a $\Gamma$-frame matrix. 
\end{lemma}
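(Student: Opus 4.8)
The plan is to trade the $d$ extra columns for $d$ extra rows carrying an identity block. Since $\cG(\Gamma)^t_d$ is by definition the closure under minors of the matroids of the form $M[A \mid D]$ with $M(A) \in \cG(\Gamma)^t$ and $D$ having $d$ columns, and since a minor of a matroid that has a representation of the required type is itself a minor of such a matroid, it suffices to produce such a representation for one matroid $M_0 = M[A \mid D]$ of this kind; the ``minor of'' in the statement is there precisely to absorb the minor-closure.

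I would first standardise $M_0$. As $M(A) \in \cG(\Gamma)^t$, the row space of $A$ is projectively equivalent to that of some $\sqbinom{P}{Q}$ with $Q$ a $\Gamma$-frame matrix on a row set $B_0$ and $P$ having at most $t$ rows; scaling the columns of $[A \mid D]$ indexed by $E$ --- which changes neither the represented matroid nor the number of columns of $D$ --- we may assume that $A$ and $\sqbinom{P}{Q}$ have the same row space. One then checks that $M_0$ has a representation of the block form $\left(\begin{smallmatrix} P & P_D \\ Q & Q_D \\ 0 & S \end{smallmatrix}\right)$, where the columns of $P_D$, $Q_D$, $S$ are indexed by a $d$-element set $F$ and the last row-block --- which accounts for the vectors of $\rowspace([A\mid D])$ that vanish on $E$ --- has at most $d$ rows.

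Now put $P_2 = (P \mid P_D)$, padded with zero rows to exactly $t$ rows; $P_0 = \left(\begin{smallmatrix} Q & Q_D \\ 0 & S \end{smallmatrix}\right)$; and $P_1 = (0_{F \times E} \mid I_F)$, which has $d$ rows. Then $\sqbinom{P_2}{P_0}$ is a representation of $M_0$, so it remains only to see that $\sqbinom{P_1}{P_0}$ is row-equivalent to a $\Gamma$-frame matrix. We have
\[
\begin{pmatrix} P_1 \\ P_0 \end{pmatrix} = \begin{pmatrix} 0 & I_F \\ Q & Q_D \\ 0 & S \end{pmatrix}.
\]
For each row $\rho$ below the top block, subtract from $\rho$ the combination $\sum_{f \in F} \rho[f]\,(\text{row } f)$; since row $f$ is zero on $E$ and equals the $f$-th unit vector on $F$, this clears both $Q_D$ and $S$ while leaving the top block untouched, yielding
\[
\begin{pmatrix} 0 & I_F \\ Q & 0 \\ 0 & 0 \end{pmatrix}.
\]
This matrix is obtained from the $\Gamma$-frame matrix $Q$ by appending some zero rows together with the unit columns supported on the new rows indexed by $F$, so it is again a $\Gamma$-frame matrix, and the lemma follows.

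I do not expect a substantial obstacle: the whole content is the identity-block trick together with the row reduction that clears the appended columns. The one step that deserves care is the standardisation, where one must pass from an arbitrary representation of a matroid in $\cG(\Gamma)^t$ to one whose $E$-columns literally form $\sqbinom{P}{Q}$ with $Q$ a genuine $\Gamma$-frame matrix --- this uses only that column scalings and row operations preserve the represented matroid, but it is worth spelling out since $\Gamma$-frame matrices are not themselves closed under column scalings.
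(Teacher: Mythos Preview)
Your proof is correct and takes essentially the same approach as the paper: reduce to a generator $M[A\mid D]$, standardise so that the $E$-columns carry $\sqbinom{P}{Q}$, then introduce an identity block on the $d$ extra columns as $P_1$ and clear the non-frame entries by row operations. The only cosmetic difference is your explicit kernel block $S$ from the standardisation, which the paper sidesteps by tacitly allowing extra zero rows in the $\Gamma$-frame matrix $Q$; either way the content is the same identity-block trick.
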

\begin{proof}
	Let $N_0 \in \cG(\Gamma)^t_d$; we see that there is a matroid $N$ with an $N_0$-minor, $t$-element set $X$, a set $B$ and a $d$-element set $R \subseteq E(N)$ so that $N = M(A)$, where $A \in \bF^{(B \cup X) \times E(N)}$ is such that $A[B,E(N)-R]$ is a $\Gamma$-frame matrix. Let $A_N = A[B,E-R] \oplus I_R$. It is clear that $A_N[B \cup R]$ is a $\Gamma$-frame matrix, and that $A_N$ is row-equivalent to the matrix $A_N' = A_N + (A[R] \oplus 0_{R \times E})$. But $A[B] = A_N'[B]$, so $A[B]$ is obtained from a matrix row-equivalent to a $\Gamma$-frame matrix by removing $|R| = d$ rows. Since $A[X]$ has $t$ rows, the result follows with $P_2 = A[X]$ and $P_0 = A[B]$ and $P_1 = A_N'[R]$. 
\end{proof}


	 To derive our main results, we need to understand which projective and affine geometries belong to which $\cG(\Gamma)^t$, $\xpinch{t}$ and $\sqpinch{t}$. Given a subgroup $\Gamma$ of $\bF^*$ and $t \in \nni$, we write $(t,\Gamma) \preceq (t',\Gamma')$ if $(t,|\Gamma|)$ does not exceed $(t,|\Gamma'|)$ in the lexicographic order on $(\nni)^2$. This is equivalent to the statement that $|\bF|^t|\Gamma| \le |\bF|^{t'}|\Gamma'|$. 
	 
	 In the next two lemmas, we use the fact that a simple $\bF$-represented matroid is a restriction of an affine geometry if and only if it has a representation $A$ for which $\row(A)$ contains a vector with no zero entries. 
	
\begin{lemma}\label{techtwo}
	Let $\bF = \GF(2)$ and $t \in \nni$. If $N$ is one of $\AG(t+3,2)$ or $\PG(t+2,2)$, then $N \notin \cG(\{1\})^t$ but $N \in \sqpinchg{\{1\}}{t}$ and $N \in \cG(\{1\})^{t'}$ for all $t' > t$. 
\end{lemma}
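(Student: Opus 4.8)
The statement has three parts to establish: $N \notin \cG(\{1\})^t$, $N \in \sqpinchg{\{1\}}{t}$, and $N \in \cG(\{1\})^{t'}$ for all $t' > t$. The plan is to make the membership in $\sqpinchg{\{1\}}{t}$ the main point and get the other two almost for free. First I would record that for every $n$ the matroid $\DG^{\smallsq}(n,\{1\})^t$ lies in $\cG(\{1\})^{t+1}$: writing it as $M(A\mid w)$ with $A=\sqbinom{P}{Q}$ a standard representation of $\DG(n,\{1\})^t$ (so $P$ has the $t$ rows $X$ and $Q$ is a $\{1\}$-frame matrix on rows $B_0$) and $w=\binom{0_X}{b_{r_1}+b_{r_2}+b_{r_3}}$, one moves the row $r_1$ into the top block: the $w$-column then restricts on $B_0-r_1$ to the weight-$2$ vector $b_{r_2}+b_{r_3}$, every column of $Q$ stays a $\{1\}$-frame column on $B_0-r_1$, and the result is a representation $\sqbinom{P''}{Q''}$ with $t+1$ rows on top and $Q''$ a $\{1\}$-frame matrix. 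Since $\cG(\{1\})^{t+1}$ is minor-closed (Lemma~\ref{findpg}) and is contained in each $\cG(\{1\})^{t'}$ with $t'\ge t+1$ (adjoin zero rows), this gives $\sqpinchg{\{1\}}{t}\subseteq\cG(\{1\})^{t+1}\subseteq\cG(\{1\})^{t'}$, so the second part implies the third.

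The heart of the argument is computing the contraction $\DG^{\smallsq}(n,\{1\})^t\con w$. With $w=b_{r_1}+b_{r_2}+b_{r_3}$, contracting $w$ identifies the coordinate $r_3$ with $r_1+r_2$; hence a column $(v,b_{r_3})$ becomes $(v,b_{r_1}+b_{r_2})$, a column $(v,b_{r_3}+b_{r_k})$ with $r_k\in B_0-\{r_1,r_2,r_3\}$ becomes the weight-$3$ vector $(v,b_{r_1}+b_{r_2}+b_{r_k})$, every other column keeps its shape, and $w$ becomes a loop. So, up to simplification, $\DG^{\smallsq}(n,\{1\})^t\con w$ is the represented matroid on rows $X\cup(B_0-r_3)$ whose columns are those of $\DG(n-1,\{1\})^t$ (with frame rows $B_0-r_3$) together with every vector $(v,\,b_{r_1}+b_{r_2}+b_{r_k})$ for $v\in\GF(2)^X$ and $r_k\in B_0-\{r_1,r_2,r_3\}$.

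Now I would specialise. Taking $n=t+4$, so $B_0-\{r_1,r_2,r_3\}=\{r_4\}$, the column set above is exactly all nonzero vectors of $\GF(2)^{X\cup\{r_1,r_2,r_4\}}$: the columns of $\DG(t+3,\{1\})^t$ give all of them except the $2^t$ vectors whose $\{r_1,r_2,r_4\}$-part equals $b_{r_1}+b_{r_2}+b_{r_4}$, and the extra columns supply precisely those. So $\DG^{\smallsq}(t+4,\{1\})^t\con w$ simplifies to $\PG(t+2,2)$, whence $\PG(t+2,2)\in\sqpinchg{\{1\}}{t}$. Taking $n=t+5$, the simplification $Q'$ of $\DG^{\smallsq}(t+5,\{1\})^t\con w$ has rank $t+4$ and consists of all nonzero vectors of $\GF(2)^{X\cup\{r_1,r_2,r_4,r_5\}}$ whose $\{r_1,r_2,r_4,r_5\}$-part has weight at most $2$, together with all $(v,b_{r_1}+b_{r_2}+b_{r_j})$, $j\in\{4,5\}$. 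The vectors of $\PG(t+3,2)$ missing from $Q'$ are exactly those whose $\{r_1,r_2,r_4,r_5\}$-part lies in $\{b_{r_1}+b_{r_4}+b_{r_5},\ b_{r_2}+b_{r_4}+b_{r_5},\ b_{r_1}+b_{r_2}+b_{r_4}+b_{r_5}\}$; each of these has its $r_4$-coordinate equal to its $r_5$-coordinate, so all of them lie in the hyperplane $H=\{x:x_{r_4}=x_{r_5}\}$. Hence $Q'$ contains all of $\PG(t+3,2)\del H=\AG(t+3,2)$, so $\AG(t+3,2)$ is a restriction of $Q'$, which is a minor of $\DG^{\smallsq}(t+5,\{1\})^t$; therefore $\AG(t+3,2)\in\sqpinchg{\{1\}}{t}$, and the second and third parts follow.

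Finally, for $N\notin\cG(\{1\})^t$: the matroid $\PG(t+2,2)$ is simple of rank $t+3$ with $2^{t+3}-1$ elements, whereas $h_{\cG(\{1\})^t}(t+3)=|W^t(t+3)|=2^t\bigl(\tbinom{3}{2}+3\bigr)+(2^t-1)=7\cdot2^t-1<2^{t+3}-1$, so $\PG(t+2,2)\notin\cG(\{1\})^t$; and since contracting a point of $\AG(t+3,2)$ yields $\PG(t+2,2)$ as a minor and $\cG(\{1\})^t$ is minor-closed, also $\AG(t+3,2)\notin\cG(\{1\})^t$. The hard part is the middle step: one has to notice that contracting the single ``square'' element $w$ turns it into a whole fan of weight-$3$ columns through a fixed frame edge, and then check that this fan is exactly what completes a Dowling geometry $\DG(\,\cdot\,,\{1\})^t$ to a projective geometry at rank $t+3$ and, with one extra frame row, to a matroid whose complement of a hyperplane is an affine geometry at rank $t+4$; the remaining verifications are bookkeeping with the column sets $W^t(n)$.
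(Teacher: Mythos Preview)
Your argument is correct and takes a genuinely different route from the paper. The paper handles the three assertions by three separate constructions: the counting bound for $\PG(t+2,2)\notin\cG(\{1\})^t$ is the same as yours, but for $\AG(t+3,2)\in\sqpinchg{\{1\}}{t}$ it builds an explicit matroid from the incidence matrix of $K_{2,4}$, checks that (after removing a redundant row) it sits inside $\DG^{\smallsq}(t+5,\{1\})^t$, and then counts to identify the contraction by $w'$ with $\AG(t+3,2)$; finally, for $N\in\cG(\{1\})^{t'}$ it gives an entirely independent $4$-cycle construction. You instead compute $\DG^{\smallsq}(n,\{1\})^t/w$ once and for all, read off $\PG(t+2,2)$ at $n=t+4$ and $\AG(t+3,2)$ (as a hyperplane-complement restriction) at $n=t+5$, and you derive the third assertion from the second via the clean general containment $\sqpinchg{\{1\}}{t}\subseteq\cG(\{1\})^{t+1}$ obtained by promoting one frame row. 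Your approach is more unified and makes the mechanism transparent (the single square element, once contracted, manufactures exactly the weight-$3$ columns through a fixed edge that are needed to complete the Dowling geometry to a projective or affine one), at the cost of a slightly longer bookkeeping verification of the column sets; the paper's ad hoc constructions are shorter to state but give less insight into why the particular ranks $t+4$ and $t+5$ are the right ones.
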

\begin{proof}
	Since $\sqpinchg{\{1\}}{t}$ and $\cG(\{1\})^t$ are minor-closed and $\PG(t+2,2)$ is a minor of $\AG(t+3,2)$, it suffices to show that $\PG(t+2,2) \notin \cG(\{1\})^t$ and $\AG(t+3,2) \in \sqpinchg{\{1\}}{t} \cap \cG(\{1\})^{t+1}$. The class $\cG(\{1\})^t$ has extremal function $f_{2,1,t}(n) = 2^t\tbinom{n-t+1}{2} + 2^t-1$. Now $f_{2,1,t}(t+3) = 7 \cdot 2^t - 1 < 2^{t+3}-1$, so $\PG(t+2,2) \notin \cG(\{1\})^t$ as required. 
	
	Let $K \cong K_{2,4}$ be the complete bipartite graph with bipartition $(\{1,2,3,4\},\{5,6\})$. Let $X \subseteq \bF^{[6]}$ be the set of columns of the incidence matrix of $K$ and $w \in \bF^{[6]}$ be the characteristic vector of $\{1,2,3,4\}$. Let $T$ be a $t$-element set, and $A$ be a matrix with row-set $[6] \cup T$ whose set of columns is $\bF^T \times X$. Let $w' = (0^T,w)$ and let $M$ be obtained from $M[A|w']$ by contracting column $w'$. Since the incidence matrix of $K$ has rank $5$, we can remove a redundant row from $A|w'$ to see that $M$ is a contraction of a restriction of $\DG_{\smallsq}(t+5,\{1\})$, so $r(M) \le t+4$ and $M \in \sqpinchg{\{1\}}{t}$. 
	
	By construction, no pair of columns of $A$ add to $w'$, so $M$ is simple with $|M| = 2^t|X| = 2^{t+3}$. Moreover, one can check that $M$ has a representation $A_0$ with row set $([6]-\{1\}) \cup T$ for which $A_0[T] = A[T]$ and $A_0[5] + A_0[6]$ sum to the all-ones vector. Therefore $M$ is a simple restriction of $\AG(t+3,2)$ with $2^{t+3}$ elements, from which it follows that $M \cong \AG(t+3,2)$, so $\AG(t+3,2) \in \sqpinchg{\{1\}}{t}$. 
	
	Finally, consider a matrix $A'$ with row-set $[t+5]$ that contains as columns precisely the $v$ for which $v[[4]]$ is a column of the incidence matrix of the $4$-cycle $(1,2,3,4)$ with vertex set $[4]$. Clearly $\rank(A') = t+4$ and $M(A') \in \cG(\{1\})^{t+1} \subseteq \cG(\{1\})^{t'}$; moreover, $A'[2] + A'[4]$ is an all-ones vector, so $M(A')$ is a restriction of $\AG(t+3,2)$; since $A'$ has $2^{t+3}$ distinct columns it follows that $M(A') \cong \AG(t+3,2) \in \cG(\{1\})^{t'}$.  
\end{proof}

\begin{lemma}\label{techthree}
	Let $\bF = \GF(3)$ and $t \in \nni$. If $N \cong \AG(t+2,3)$ then $N \notin \cG(\bF^*)^t$ but $N \in \sqpinchg{\bF^*}{t}$ and $N \in \cG(\Gamma')^{t'}$ for all $(t',\Gamma') \succ (t,\Gamma)$. 
\end{lemma}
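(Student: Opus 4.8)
The plan is to prove the three assertions of the statement separately, in each case reducing to a claim about point sets in $\PG(t+2,3)$ and the projection maps underlying the $W^t$-representations. Throughout, for a standard representation of a $\DG(\,\cdot\,)^t$-type matroid I write $\pi$ for the projection from $\PG(t+2,3)$ onto a lower-rank projective geometry that forgets the ``$X$''-coordinates, and $C$ for its centre. For the first assertion, suppose for contradiction that $N \in \cG(\bF^*)^t$. Since $N$ is a simple rank-$(t+3)$ matroid in the class, the discussion in Section~\ref{dowlingsection} shows that $N$ has a representation all of whose columns lie in $W^t(t+3)$, i.e.\ $N$ is a restriction of $\DG(t+3,\bF^*)^t$. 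Here $\pi$ maps onto $\PG(2,3)$ with $C \cong \PG(t-1,3)$, and the nonzero $\bF^*$-frame vectors of $\bF^3$ are, projectively, precisely the nine points of $\PG(2,3)$ on the three coordinate lines; so the point set of $\DG(t+3,\bF^*)^t$ is $\pi^{-1}(L) \cup C$, where $L$ is that union of three lines, and its complement in $\PG(t+2,3)$ is $\pi^{-1}(S)$, where $S$ is the set of four points of $\PG(2,3)$ with every coordinate nonzero. The points of $S$ are not collinear (three of them already span $\PG(2,3)$), so $S$ spans $\PG(2,3)$ and $\pi^{-1}(S)$ spans $\PG(t+2,3)$. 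On the other hand $N \cong \AG(t+2,3)$ is projectively equivalent to $\PG(t+2,3)\setminus H$ for some hyperplane $H$, so being a restriction of $\DG(t+3,\bF^*)^t$ forces $\pi^{-1}(S) \subseteq H$, contradicting that $\pi^{-1}(S)$ is spanning.

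For the second assertion I will realise $N$ as a minor of $\DG^{\smallsq}(t+4,\bF^*)^t$. Take a standard representation $A$ of $\DG(t+4,\bF^*)^t$ with $B_0 = \{b_1,b_2,b_3,b_4\}$, let $w = b_1+b_2+b_3$ so that $M(A|w) \cong \DG^{\smallsq}(t+4,\bF^*)^t$, and contract $w$. A direct computation of how the columns of $W^t(t+4)$ reduce modulo $w$ shows that the simplification of $M(A|w)/w$ is a rank-$(t+3)$ restriction of $\PG(t+2,3)$ with point set $\pi^{-1}(L') \cup C$, where $\pi$ maps onto $\PG(2,3)$ with $C \cong \PG(t-1,3)$, and $L'$ consists of the nine coordinate-line points of $\PG(2,3)$ together with the two points of $S$ produced by the columns $-b_3 + \gamma b_4$. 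The two points of $S$ missing from $L'$ lie on a common line $\ell$ of $\PG(2,3)$, so $\langle C,\ell\rangle$ is a hyperplane of $\PG(t+2,3)$ with $\PG(t+2,3)\setminus\langle C,\ell\rangle = \pi^{-1}(\PG(2,3)\setminus\ell) \subseteq \pi^{-1}(L')$; since $\PG(t+2,3)\setminus\langle C,\ell\rangle \cong \AG(t+2,3)$, this presents $N$ as a restriction of the simplification of $M(A|w)/w$, hence a minor of $\DG^{\smallsq}(t+4,\bF^*)^t$.

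For the third assertion, note that over $\bF = \GF(3)$ the condition $(t',\Gamma') \succ (t,\bF^*)$ amounts to $t' \ge t+1$; since $\cG(\{1\})^{t+1} \subseteq \cG(\{1\})^{t'} \subseteq \cG(\Gamma')^{t'}$, it suffices to show $N \in \cG(\{1\})^{t+1}$. Using a standard representation of $\DG(t+3,\{1\})^{t+1}$, the projection $\pi$ now maps onto $\PG(1,3)$ with centre $C \cong \PG(t,3)$, and the point set of $\DG(t+3,\{1\})^{t+1}$ is $C$ together with the fibres over three of the four points of $\PG(1,3)$ -- that is, $\PG(t+2,3)$ with the single fibre over the fourth point deleted. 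That fibre together with $C$ is a hyperplane $H_0$ of $\PG(t+2,3)$, and $\PG(t+2,3)\setminus H_0 \cong \AG(t+2,3)$ is a restriction of $\DG(t+3,\{1\})^{t+1}$, whence $N \in \cG(\{1\})^{t+1}$.

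I expect the first assertion to be the main obstacle. In the binary analogue, Lemma~\ref{techtwo}, non-membership is immediate from a single inequality between sizes; but here $|\AG(t+2,3)| = 3^{t+2}$ is \emph{strictly smaller} than $f_{3,2,t}(t+3) = h_{\cG(\bF^*)^t}(t+3)$ for all $t \ge 1$, so no counting argument can work, and one genuinely needs the reduction to ``restriction of $\DG(t+3,\bF^*)^t$'' followed by the geometric observation that the complementary fibre set $\pi^{-1}(S)$ is spanning. The column bookkeeping required for the second assertion (reducing $W^t(t+4)$ modulo $w$) is routine but must be carried out with some care.
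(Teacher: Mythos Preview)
Your proof is correct, and all three assertions go through, but for the first two you take a different route from the paper.

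For $N \notin \cG(\bF^*)^t$, the paper does not embed in $\DG(t+3,\bF^*)^t$. Instead it argues that removing a row from any $\GF(3)$-representation of $\AG(m,3)$ leaves a matroid with an $\AG(m-1,3)$-restriction; so if $\AG(t+2,3) \in \cG(\bF^*)^t$ then stripping away the $t$ rows of the $P$-block leaves an $\bF^*$-frame matrix whose matroid contains $\AG(2,3)$. Since $|\AG(2,3)| = 9 = h_{\cG(\bF^*)}(3)$ this would force $\AG(2,3) \cong \DG(3,\bF^*)$, contradicted by the $4$-point line in the latter. Your ``complement is spanning'' argument is equally clean once one invokes the fact from Section~\ref{dowlingsection} that membership in $\cG(\bF^*)^t$ gives a $W^t(t+3)$-representation; the paper's row-removal sidesteps that fact entirely, since removing \emph{any} $t$ rows works.

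For $N \in \sqpinchg{\bF^*}{t}$, the paper and you perform the same contraction of $w = b_1+b_2+b_3$ in $\DG^{\smallsq}(t+4,\bF^*)^t$, but analyse it differently. The paper simply writes down an explicit nine-column $\bF^*$-frame matrix $Q$ with row set $[4]$, stacks $\bF^T$ on top, adjoins $w$, and checks directly that no two columns span $w$ (so the contraction is simple of size $3^{t+2}$) and that a sum of two rows is nowhere zero (so it is affine). You instead track the image of all of $W(4)$ modulo $w$ in $\PG(2,3)$, locate the two missing points of $S$, put a line $\ell$ through them, and take the complement of $\langle C,\ell\rangle$. The paper's explicit $Q$ avoids the column-by-column bookkeeping you flag as ``routine but must be carried out with some care'', at the price of looking unmotivated; your version explains why such a $Q$ exists.

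For the third assertion the two arguments are essentially the same construction, phrased differently: the paper writes down the matrix containing all $v$ with $v[1] \ne v[2]$, you describe the same point set as the complement of a hyperplane containing the centre and one fibre.
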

\begin{proof}
	If $A$ is a $\GF(3)$-representation of $\AG(m,3)$ for some $m \ge 2$, then removing a row of $A$ yields a representation of a matroid with an $\AG(m-1,3)$-restriction. If we had $N \in \cG(\bF^*)^t$ then we could thus remove $t$ rows from some representation of $N$ to obtain an $\bF^*$-frame matrix $A_0$ for which $M(A_0)$ has an $AG(2,3)$-restriction, and so $\AG(2,3)$ is an $\bF^*$ frame matroid. But $|AG(2,3)| = 9 = h_{\cG(\bF^*)}(3)$ so this implies that $\AG(2,3) \cong \DG(3,\bF^*)$. This is a contradiction as $\DG(3,\bF^*)$ contains a $4$-point line but $\AG(2,3)$ does not. 
	
	Define an $\bF^*$-frame matrix $Q$ with row-set $[4]$ by 
	\begin{center}
		$Q = $\begin{tabular}{c|ccccccccc|}
			\cline{2-10}
			$1$ & $0$ & $0$ & $0$ & $0$ & $0$ & $0$ & $0$ & $1$ & $1$ \\
			$2$ & $1$ & $0$ & $1$ & $1$ & $1$ & $1$ & $0$ & $0$ & $0$ \\
			$3$ & $0$ & $1$ & $0$ & $1$ & $0$ & $0$ & $1$ & $0$ & $0$ \\
			$4$ & $0$ & $0$ & $2$ & $0$ & $1$ & $2$ & $1$ & $1$ & $2$ \\
			\cline{2-10}
		\end{tabular}.
	\end{center} 
	Let $T$ be a $t$-element set and let $b_1, \dotsc, b_4$ be the unit vectors corresponding to $1,\dotsc, 4$ in $\bF^{T \cup [4]}$. Let $X$ be the set of columns of $Q$ and let $A$ be a matrix whose column set is $\bF^T \times X$. This matrix has $3^{t+2}$ columns which are nonzero and pairwise non-parallel. Let $w = b_1 + b_2 + b_3$ and $M = M(A|w)$; clearly $M$ is a restriction of $\DG_{\smallsq}(t+4,\bF^*)^t$. Note further that no two columns of $A$ span $w$, so the matroid $M_0$ obtained from $M$ by contracting column $w$ is simple with $r(M_0) \le t+3$. Furthermore, $M_0$ has a representation $\sqbinom{P}{Q'}$ for some matrix $P$ with row-set $T$ and some $Q'$ with row-set $\{2,3,4\}$ in which the sum of rows $2$ and $3$ contains no zero entries. It follows that $M_0$ is a restriction of $\AG(t+2,3)$; since $|M_0| = 3^{t+2} = |\AG(t+2,3)|$, we thus have $M_0 \cong \AG(t+2,3) \cong N$ and so $N \in \sqpinchg{\bF^*}{t}$ as required.
	
	If $(t',\Gamma') \succ (t,\Gamma)$, then $t' > t$; consider an $\bF$-matrix $A'$ with row-set $[t+3]$ containing as a column every vector $v$ for which $v[1] \ne v[2]$. Now $\si(M(A')) \cong \AG(t+3,2) = N$ and, since $A'[\{1,2\}]$ is a $\{1\}$-frame matrix up to column scalings, we have $N \in \cG(\{1\})^{t+1} \subseteq \cG(\Gamma')^{t'}$ as required. 
\end{proof}

\begin{lemma}\label{techodd}
	Let $t \in \nni$ and let $N$ be either $\PG(t+1,p)$ for some prime $p > 2$ or $\AG(t+1,p)$ for some prime $p > 3$. Let $\bF = \GF(p)$ and $\Gamma$ be the index-$2$ subgroup of $\bF^*$. Then $N \notin \cG(\Gamma)^t$ but $N \in \xpinch{t}$ for all $x \in \bF^* - \Gamma$ and $N \in \cG(\Gamma')^{t'}$ for all $(t',\Gamma') \succ (t,\Gamma)$.
\end{lemma}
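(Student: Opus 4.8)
The plan is to prove the three assertions separately, using throughout that $\AG(t+1,p)$ is a restriction of $\PG(t+1,p)$ and that $\cG(\Gamma)^t$, $\xpinch{t}$, and $\cG(\Gamma')^{t'}$ are all restriction-closed, so that the affine case follows at once from the projective one in each part. For the non-membership $N\notin\cG(\Gamma)^t$, I would compare with the extremal function: $\cG(\Gamma)^t$ is minor-closed with $h_{\cG(\Gamma)^t}(n)=f_{p,(p-1)/2,t}(n)$, so a rank-$(t+2)$ member has at most $f_{p,(p-1)/2,t}(t+2)=p^t\cdot\tfrac{p+3}{2}+\tfrac{p^t-1}{p-1}$ elements, whereas $|\PG(t+1,p)|=\tfrac{p^{t+2}-1}{p-1}=p^t(p+1)+\tfrac{p^t-1}{p-1}$ and $|\AG(t+1,p)|=p^{t+1}$. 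The inequality $p+1>\tfrac{p+3}{2}$ holds for all $p>2$, and $p^{t+1}>p^t\cdot\tfrac{p+3}{2}+\tfrac{p^t-1}{p-1}$ holds for all primes $p>3$, so in each case $N$ is strictly too dense to lie in $\cG(\Gamma)^t$.

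For $N\in\xpinch{t}$ it suffices to show $\PG(t+1,p)\in\xpinch{t}$, and since all $\DG^{(x)}(n,\Gamma)^t$ with $x$ in the nontrivial coset $\bF^*\setminus\Gamma$ are isomorphic, I would fix one such $x$ and verify that $\PG(t+1,p)\cong\si\bigl(\DG^{(x)}(t+3,\Gamma)^t\con e\bigr)$, where $e$ is the distinguished column; as simplification is realised by deletions and $\xpinch{t}$ is minor-closed, this gives $\PG(t+1,p)\in\xpinch{t}$. Write $\DG^{(x)}(t+3,\Gamma)^t=M(A|w)$, where $A$ is a standard representation on row set $X\cup B_0$ with $|X|=t$ and $|B_0|=3$, with unit vectors $b_1,b_2,b_3$, and $w[X]=0$, $w[B_0]=xb_2-b_1$. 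Contracting $e$ identifies $b_1$ with $xb_2$, so each column supported on $X\cup\{b_1,b_2\}$, together with $e$, falls into one of the $p^t$ parallel classes of the columns with $B_0$-part $b_2$, while the $p^t$ columns with $B_0$-part $b_3$, the $p^t(p-1)$ columns with $B_0$-part $\gamma b_3-b_1$ or $\gamma b_3-b_2$ ($\gamma\in\Gamma$), and the $\tfrac{p^t-1}{p-1}$ columns supported on $X$ stay pairwise non-parallel. The delicate point is that $\gamma b_3-b_1\mapsto\gamma b_3-xb_2$ and $\gamma'b_3-b_2\mapsto\gamma'b_3-b_2$ are never parallel, which would require $\gamma=x\gamma'$ with $x\notin\Gamma$; this, together with the fact that $\{-\gamma/x:\gamma\in\Gamma\}$ and $\{-\gamma:\gamma\in\Gamma\}$ partition $\bF^*$, is the bookkeeping I expect to be the main obstacle. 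Once it is in place the simplification is a simple rank-$(t+2)$ $\GF(p)$-represented matroid with $p^t+p^t+p^t(p-1)+\tfrac{p^t-1}{p-1}=\tfrac{p^{t+2}-1}{p-1}=|\PG(t+1,p)|$ elements, hence is $\PG(t+1,p)$.

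For $N\in\cG(\Gamma')^{t'}$ with $(t',\Gamma')\succ(t,\Gamma)$, observe that this forces either $t'>t$, or $t'=t$ with $|\Gamma'|>\tfrac{p-1}{2}$, the latter forcing $\Gamma'=\bF^*$ since $\bF^*$ is cyclic of order $p-1$. If $\Gamma'=\bF^*$ and $t'=t$, the standard representation of $\DG(t+2,\bF^*)^t$ has its two frame rows realising all $p+1$ points of $\PG(1,p)$, so $\DG(t+2,\bF^*)^t\cong\PG(t+1,p)$ and hence $\PG(t+1,p)\in\cG(\bF^*)^t$. If $t'>t$, then $\cG(\{1\})^{t+1}\subseteq\cG(\{1\})^{t'}\subseteq\cG(\Gamma')^{t'}$, so it is enough to realise $\PG(t+1,p)$ in $\cG(\{1\})^{t+1}$: the matrix $\sqbinom{P}{a}$ in which $a$ is a single $0/1$ row (a $\{1\}$-frame matrix) and the columns are $(v,1)$ for all $v\in\GF(p)^{t+1}$ together with $(v,0)$ for $v$ ranging over representatives of $\PG(t,p)$ gives a simple rank-$(t+2)$ matroid with $p^{t+1}+\tfrac{p^{t+1}-1}{p-1}=|\PG(t+1,p)|$ elements, which is therefore $\PG(t+1,p)$. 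In every case $\AG(t+1,p)$ then follows by taking a restriction.
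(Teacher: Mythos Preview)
Your proof is correct and follows essentially the same route as the paper: a density comparison against $f_{p,(p-1)/2,t}(t+2)$ for non-membership, the contraction $\si(\DG^{(x)}(t+3,\Gamma)^t\con e)\cong\PG(t+1,p)$ for membership in $\xpinch{t}$, and an explicit $\cG(\{1\})^{t+1}$ representation (respectively the observation $\DG(t+2,\bF^*)^t\cong\PG(t+1,p)$) for the last part. One small wording issue: your opening sentence says the affine case ``follows at once from the projective one in each part'', but for the \emph{non}-membership assertion this implication goes the wrong way (a restriction of a non-member can certainly be a member); fortunately you do not actually use this, since you verify the affine density bound $p^{t+1}>p^t\cdot\tfrac{p+3}{2}+\tfrac{p^t-1}{p-1}$ for $p>3$ directly.
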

\begin{proof}
	The value of the extremal function of $\cG(\Gamma)^t$ at $n = t+2$ is $f_{p,(p-1)/2,t}(t+2) = p^t\left(\tfrac{p+3}{2}\right) + \tfrac{p^t-1}{p-1}$. If $p = 3$ then this expression is $\tfrac{p^{t+2}-1}{p-1} - p^t < |N|$. If $p \ge 5$ then, using $\tfrac{p+3}{2} \le p-1$, we have $f_{p,(p-1)/2,t}(t+2) < p^{t+1} = |N|$. Since $r(N) = t+2$ in either case, we have $N \notin \cG(\Gamma)^t$. 
	
	It suffices for all $p$ to show that $\PG(t+1,2) \in \xpinch{t}$. Let $[A|w] \in \bF^{(X \cup B_0) \times (E-\{e\})}$ be a representation of $M \cong \DG^{(x)}(t+3,\Gamma)^t$, where $A$ is a standard representation of $\DG(t+3,\Gamma)$ and $w = A[e]$ is weight-$2$ vector supported on $B_0$ whose nonzero entries are $-1$ and $x$. Let $F \subseteq E$ be the set of columns of $A[B_0]$ whose support is contained in the support of $w_0$; we have $|A| = |\Gamma|+2 = \tfrac{p+3}{2}$. The lines of $M$ containing $e$ and more than one other point are the sets $L_w = \{w\}\times F$ for $w \in \bF^X$. For each $L_w$, contracting $e$ identifes the points in $L_w$; we thus lose $\tfrac{p+1}{2}$ points for each $L_w$, so 
	\begin{align*}
	\elem(M \con e) &= \elem(M)-1 - \tfrac{p+1}{2} p^t\\
	&= f_{p,(p-1)/2,t}(t+3) - \tfrac{p+1}{2} p^t \\
	&= p^t\left(\tfrac{p-1}{2}\tbinom{3}{2} + 3\right) + \tfrac{p^t-1}{p-1} - \tfrac{p+1}{2} p^t \\
	&= \tfrac{p^{t+2}-1}{p-1}.
	\end{align*}
	So $\si(M \con e)$ is a rank-$(t+2)$ matroid in $\xpinch{t}$ with $\tfrac{p^{t+2}-1}{p-1}$ elements; it follows that $\si(M \con e) \cong \PG(t+1,2)$ as required. 
	
	Let $(t',\Gamma') \succ (t,\Gamma)$. If $t' = t$ then $\Gamma = \bF^*$ and $N \in \cG(\Gamma')^{t'}$ follows from the fact that $\xpinch{t} \subseteq \cG(\bF^*)^t$. If $t' > t$, let $A'$ be an $\bF$-matrix with row-set $[t+2]$ containing as columns all vectors $v$ for which $v[1] \in \{0,1\}$; clearly $M(A') \cong \PG(t+1,p)$ has an $N$-restriction. Since $A'[1]$ is trivially a $\Gamma'$-frame matrix we thus have $N \in \cG(\Gamma')^{t+1} \subseteq \cG(\Gamma')^{t'}$. 
\end{proof}


\section{Frame Templates}

Templates were introduced in [\ref{ggwstructure}] as a means of precisely describing a class of matroids whose members are `close' to being frame matroids. We make a simplification to the original definition, where a set named `$D$' is absorbed into `$Y_0$', with no loss of generality and the definition of `conforming' is simplified accordingly; our definition is essentially identical to that given in [\ref{gvz}], [\ref{grace}], and [\ref{gvzcounterexample}]. For a field $\bF$, an \emph{$\bF$-frame template} (hereafter just a \emph{template}) is an $8$-tuple $\Phi = (\Gamma,C,X,Y_0,Y_1,A_1,\Delta,\Lambda)$, where
\begin{enumerate}[(i)]
    \item $\Gamma$ is a subgroup of $\bF^*$,
    \item $C,X,Y_0$ and $Y_1$ are disjoint finite sets,
    \item $A_1 \in \bF^{X \times (Y_0 \cup Y_1 \cup C)}$, and
    \item $\Delta$ and $\Lambda$ are additive subgroups of $\bF^{Y_0 \cup Y_1 \cup C}$ and $\bF^X$ respectively, and both are closed under scaling by elements of $\Gamma$. 
\end{enumerate}

(In the case where $\bF$ is a prime field, with which we are mostly concerned,  both $\Delta$ and $\Lambda$ are sub\emph{spaces}.) A template describes a class of matrices; we say a matrix $A' \in \bF^{B \times E}$ \emph{respects} $\Phi$ if
\begin{enumerate}[(a)]
    \item $X \subseteq B$ and $Y_0 \cup Y_1 \cup C \subseteq E$, 
    \item $A_1 = A'[X, Y_0 \cup Y_1 \cup C]$, 
    \item there is a set $Z \subseteq E-(Y_0 \cup Y_1 \cup C)$ such that $A'[X,Z] = 0$ and each column of $A'[B-X,Z]$ is a unit vector, 
    \item each row of $A'[B-X,Y_0 \cup Y_1 \cup C]$ is in $\Delta$, and
    \item the matrix $A'[B,E - (Z \cup Y_0 \cup Y_1 \cup C)]$ has the form $\sqbinom{P}{F}$, where each column of $P$ is in $\Lambda$, and $F$ is a $\Gamma$-frame matrix. 
\end{enumerate}
    Whenever we define such an $A \in \bF^{B \times E}$, we implicitly name the set $Z \subseteq E$. The structure of a matrix respecting $\Phi$ is depicted below.
\begin{center}    
\begin{tabular}{c|c|c|c|}
	\tcol{} & \tcol{} & \tcol{$Z$} & \tcol{$Y_0 \cup Y_1 \cup C$} \\
	\cline{2-4}
	$X$ & columns from $\Lambda$ & $0$ & {$A_1$}\\
	\cline{2-4}
	& $\Gamma$-frame matrix & unit columns & rows from $\Delta$\\
	\cline{2-4}	
\end{tabular}
\end{center}
A matrix $A \in \bF^{B \times E}$ \emph{conforms to $\Phi$} if there is a matrix $A' \in \bF^{B \times E}$ respecting $\Phi$ such that $A'[B,E-Z] = A[B,E-Z]$, and for each $z \in Z$ we have $A[z] = A'[z] + A'[y]$ for some $y \in Y_1$. Equivalently, a matrix conforming to $\Phi$ is one of the form $A'(I_E + H)$, where $A' \in \bF^{B \times E}$ respects $\Phi$, and $H \in \bF^{E \times E}$ is a matrix for which every nonzero entry lies in $H[Y_1,Z]$, such that every column of $H[Y_1,Z]$ is a unit vector. We call such a matrix of the form $S = I_E + H$ an \emph{$(E,Z,Y_1)$-shift matrix}; such a matrix is `lower-diagonal' and therefore nonsingular.

A matroid $M$ \emph{conforms to $\Phi$} if there is a matrix $A$ conforming to $\Phi$ for which $M = M(A) \con C \del Y_1$, or equivalently if there is a matrix $A$ respecting $\Phi$ and an $(E,Z,Y_1)$-shift matrix $S$ such that $M = M(AS) \con C \del Y_1$. Let $\cM(\Phi)$ denote the class of matroids that are isomorphic to a matroid conforming to $\Phi$, and $\cM^*(\Phi)$ denote the class of their duals. Two templates $\Phi$ and $\Phi'$ are \emph{equivalent} if $\cM(\Phi) = \cM(\Phi')$. Classes $\cM(\Phi)$ are not in general minor-closed; we write $\ol{\cM(\Phi)}$ for the closure of $\cM(\Phi)$ under minors. Note that if $\Phi$ is a template for which $|X| = t$ and $\Lambda = \bF^X$ while $C \cup Y_0 \cup Y_1 = \varnothing$, then $\cM(\Phi)$ is the class $\cG(\Gamma)^t$ described earlier. 

We can now state the hypothesis that provides the motivation for the definition of template.
This is a modified version of a hypothesis given by Geelen, Gerards, and Whittle in [\ref{ggwstructure}], and it was first stated in the following form in [\ref{gvzcounterexample}].

\begin{hypothesis}\label{structure1}
Let $\bF$ be a finite field of characteristic $p$ and $\cM$ be a minor-closed class of $\bF$-representable matroids not containing all $\GF(p)$-representable matroids. There exist integers $k$ and $m$, and finite sets $\bT$ and $\bT^*$ of $\bF$-frame templates such that 
    \begin{itemize}
    	\item $\cM$ contains $\cup_{\Phi \in \bT}\cM(\Phi)$  and $\cup_{T \in \bT^*}\cM^*(\Psi)$, and 
        \item each simple vertically $k$-connected matroid $M \in \cM$ with an $M(K_m)$-minor is in $\cup_{\Phi \in \bT}\cM(\Phi)$ or $\cup_{\Psi \in \bT^*}\cM^*(\Psi)$.
    \end{itemize}
\end{hypothesis}

While we do not assume this hypothesis in paper, our results will facilitate its future application.

\section{Taming Templates}

In this section we prove that templates can be substantially simplified with no loss of generality. Our first few lemmas give basic ways to manipulate templates without changing the class of conforming matroids. The first allows us to generically contract an appropriately structured subset of $C$. 

\begin{lemma}\label{contracttemplate}
	Let $\Phi = \templatecrap$ be a template and let $\wh{X} \subseteq X$ and $\wh{C} \subseteq C$ be sets for which $\rank(A_1[\wh{X},\wh{C}]) = |\wh{X}|$ while $A_1[X-\wh{X},\wh{C}] = 0$ and $\Delta[\wh{C}] = 0$.  Then $\Phi$ is equivalent to the template \[\Phi' = (\Gamma,C',X',Y_0,Y_1,A_1[X',C' \cup Y_0 \cup Y_1], \Lambda[X'], \Delta[C']),\]
	where $X' = X - \wh{X}$ and $C' = C-\wh{C}$. 
\end{lemma}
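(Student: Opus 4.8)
The plan is to prove the equivalence $\cM(\Phi)=\cM(\Phi')$ directly, by translating matrices conforming to one template into matrices conforming to the other. The key structural fact is that in \emph{any} matrix $A'$ respecting $\Phi$, the columns indexed by $\wh C$ are supported entirely on the rows $\wh X$: condition~(b) gives $A'[X-\wh X,\wh C]=A_1[X-\wh X,\wh C]=0$, and condition~(d) forces every row of $A'[B-X,\wh C]$ to lie in $\Delta[\wh C]=\{0\}$, so $A'[B-X,\wh C]=0$. Since $A'[\wh X,\wh C]=A_1[\wh X,\wh C]$ has rank $|\wh X|$, the block $A'[B,\wh C]$ has rank exactly $|\wh X|$, and hence contracting $\wh C$ in $M(A')$ can be performed by row-reducing within the rows $\wh X$ until some $|\wh X|$-element subset of $\wh C$ forms an identity submatrix on $\wh X$, then deleting the rows $\wh X$ and all of $\wh C$ (the remaining $\wh C$-columns having become zero columns). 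As this row-reduction is confined to rows that are deleted afterwards, $M(A')\con\wh C=M(A'[B-\wh X,\,E-\wh C])$. Since $\wh C$ is disjoint from $Z$, from $Y_1$, and from $C'$, the passage to these $\wh C$-deleted matrices also commutes with multiplication by a shift matrix and with the further operations $\con C'\del Y_1$.

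For $\cM(\Phi)\subseteq\cM(\Phi')$, let $M$ conform to $\Phi$, say $M=M(A'S)\con C\del Y_1$ with $A'$ respecting $\Phi$ and $S$ an $(E,Z,Y_1)$-shift matrix, and set $\wh A':=A'[B-\wh X,\,E-\wh C]$. Because $S$ acts trivially on the $\wh C$-columns, the fact above yields $M(A'S)\con\wh C=M\bigl((A'S)[B-\wh X,\,E-\wh C]\bigr)$, and, as $Z,Y_1\subseteq E-\wh C$, this equals $\wh A'\bar S$ for a suitable $(E-\wh C,Z,Y_1)$-shift matrix $\bar S$. A check of conditions~(a)--(e), using that $\wh C$ is disjoint from $Y_0\cup Y_1\cup C'$, from $Z$, and from the frame part of the matrix so that the index sets relevant to $\Phi'$ coincide with those for $\Phi$, shows that $\wh A'$ respects $\Phi'$ with the same set $Z$; the only point beyond set-chasing is that each row of $A'[B-X,Y_0\cup Y_1\cup C']$ is a coordinate projection of a vector of $\Delta$, hence lies in $\Delta[Y_0\cup Y_1\cup C']$, the group playing the role of $\Delta$ in $\Phi'$. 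Thus $\wh A'\bar S$ conforms to $\Phi'$, and since $M=M(A'S)\con\wh C\con C'\del Y_1=M(\wh A'\bar S)\con C'\del Y_1$, we conclude $M\in\cM(\Phi')$.

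For the reverse inclusion I would invert this construction. Given a matrix conforming to $\Phi'$, rename its ambient row and column elements so that $\wh X$ and $\wh C$ are fresh (this changes the conforming matroid only up to isomorphism, and $\cM(\Phi)$ is isomorphism-closed), then enlarge the respecting matrix $\wh A'$ by a row block $\wh X$ and a column block $\wh C$: set $A'[\wh X,\wh C]=A_1[\wh X,\wh C]$, $A'[\wh X,Y_0\cup Y_1\cup C']=A_1[\wh X,Y_0\cup Y_1\cup C']$, $A'[\wh X,Z]=0$, fill in the $\wh X$-entries on the frame columns so that each corresponding column of the $X$-block lies in $\Lambda$ (possible since $\Lambda[X']$ is the group playing the role of $\Lambda$ in $\Phi'$), and set all other new entries to $0$; extend the shift matrix by zero-padding. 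Conditions~(a)--(e) for $\Phi$ then follow, the only nontrivial step being~(d): the hypothesis $\Delta[\wh C]=\{0\}$ is exactly what guarantees that a bottom-block row of the form $(\delta',0_{\wh C})$ with $\delta'\in\Delta[Y_0\cup Y_1\cup C']$ lies back in $\Delta$. Contracting $\wh C$ in the resulting conforming matrix recovers the original matrix conforming to $\Phi'$, so the matroid lies in $\cM(\Phi)$.

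The main obstacle is not any single computation but the accounting: one must verify all of~(a)--(e) for the two templates at once and confirm that contracting $\wh C$ genuinely commutes both with multiplication by the shift matrix and with the remaining $\con C'\del Y_1$. The one place where a hypothesis does more than set-chasing is the use of $\Delta[\wh C]=\{0\}$, together with $A_1[X-\wh X,\wh C]=0$ and $\rank(A_1[\wh X,\wh C])=|\wh X|$: this is what lets the $\Delta$-coordinates split off cleanly and makes the whole translation reversible.
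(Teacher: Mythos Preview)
Your proof is correct and takes essentially the same approach as the paper: both arguments observe that the hypotheses force $A[B-\wh X,\wh C]=0$ with $A[\wh X,\wh C]$ of full row rank, so that $M(A)\con\wh C = M(A[B-\wh X,E-\wh C])$, and then check that the restriction map $A\mapsto A[B-\wh X,E-\wh C]$ carries matrices conforming to $\Phi$ onto matrices conforming to $\Phi'$. The paper's write-up is more compressed (it works directly with conforming matrices rather than separating out the shift matrix, and dispatches the reverse inclusion by citing surjectivity), but the underlying argument is the same as yours.
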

\begin{proof}
	For each matrix $A \in \bF^{B \times E}$ conforming to $\Phi$, let $\varphi(A)$ denote the matrix $A[B-\wh{X},E-\wh{C}]$. Note that for every $A'$ conforming to $\Phi'$ there is some $A$ conforming to $\Phi$ for which $\varphi(A) = A'$. By the hypotheses we see that $A[\wh{X},\wh{C}]$ is a rank-$|\wh{X}|$ submatrix and $A[B-\wh{X},\wh{C}] = 0$, so $M(A) \con \wh{C} = M(\varphi(A))$. It follows that 
	\[M(A) \con C \del Y_1 = M(\varphi(A)) \con (C - \wh{C}) \del Y_1 \in \cM(\Phi')\] for each $A$ conforming to $\Phi$. This gives $\cM(\Phi) \subseteq \cM(\Phi')$; the fact that $\cM(\Phi') \subseteq \cM(\Phi)$ follows from the surjectivity of $\varphi$. 
\end{proof}

The next lemma allows us to perform `row-operations' on a template.

\begin{lemma}\label{unitary}
	Let $\Phi = \templatecrap$ be a template over a field $\bF$ and let $U \in \bF^{X \times X}$ be nonsingular. Then $\Phi$ is equivalent to the template $\Phi' = (\Gamma,C,X,Y_0,Y_1,UA_1,\Delta,U\Lambda)$.
\end{lemma}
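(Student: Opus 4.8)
The plan is to show directly that $\cM(\Phi) = \cM(\Phi')$ by exhibiting a bijection between matrices respecting $\Phi$ and matrices respecting $\Phi'$ that preserves the represented matroid after the contraction and deletion operations. The natural candidate is left-multiplication of the $X$-rows by $U$: given $A' \in \bF^{B \times E}$ respecting $\Phi$, define $\wh{U} \in \bF^{B \times B}$ to be the block matrix that acts as $U$ on the rows in $X$ and as the identity on $B - X$, and set $A'' = \wh{U}A'$. Since $\wh{U}$ is nonsingular, $M(A'') = M(A')$, and hence $M(A''S)\con C \del Y_1 = M(A'S) \con C \del Y_1$ for any shift matrix $S$ (noting $S$ acts on columns and commutes with the row operation); so if $A''$ respects $\Phi'$ whenever $A'$ respects $\Phi$, we get $\cM(\Phi) \subseteq \cM(\Phi')$, and the reverse inclusion follows by applying the same argument with $U^{-1}$.

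First I would check that $A''$ respects $\Phi'$, going through conditions (a)--(e) of the definition. Conditions (a) and (c) involve only the sets $X$, $B-X$, $Z$, and the pattern of the $B-X$ rows, none of which is changed by $\wh{U}$ (the entries $A''[B-X, \cdot] = A'[B-X,\cdot]$, so the zero block $A''[X,Z]=UA'[X,Z]=U\cdot 0 = 0$ and the unit columns of $A''[B-X,Z]$ persist). Condition (b) becomes $A''[X, Y_0 \cup Y_1 \cup C] = UA'[X,Y_0\cup Y_1 \cup C] = UA_1$, which is exactly the $A_1$-entry of $\Phi'$. Condition (d) is about rows of $A''[B-X, Y_0 \cup Y_1 \cup C] = A'[B-X, Y_0\cup Y_1 \cup C]$ lying in $\Delta$, unchanged. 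For condition (e), the submatrix $A''[B, E-(Z\cup Y_0\cup Y_1\cup C)]$ has the form $\sqbinom{UP}{F}$ where $\sqbinom{P}{F}$ was the corresponding block of $A'$; each column of $P$ lies in $\Lambda$, so each column of $UP$ lies in $U\Lambda$, and $F$ is unchanged and still a $\Gamma$-frame matrix --- this matches $\Phi'$, whose $\Lambda$-component is $U\Lambda$. Finally I should remark that $U\Lambda$ is still an additive subgroup closed under $\Gamma$-scaling (left multiplication by $U$ is additive and commutes with scalar multiplication), so $\Phi'$ is a genuine template.

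I do not anticipate a serious obstacle here; the only thing to be careful about is the interaction between the row operation $\wh{U}$ and the column shift matrix $S$. Since $\wh{U}$ multiplies on the left and $S = I_E + H$ multiplies on the right, they commute, so a matrix conforming to $\Phi$ (of the form $A'S$ with $A'$ respecting $\Phi$) maps under $\wh{U}$ to $\wh{U}A'S = A''S$ with $A''$ respecting $\Phi'$, i.e. to a matrix conforming to $\Phi'$; and this correspondence is a bijection with inverse given by $U^{-1}$. Composing with the fixed minor operation $\con C \del Y_1$ and passing to isomorphism classes yields $\cM(\Phi) = \cM(\Phi')$, as required.
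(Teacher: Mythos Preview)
Your proposal is correct and follows essentially the same approach as the paper: define $\wh{U} = U \oplus I_{B-X}$, observe that $\wh{U}A$ respects $\Phi'$ whenever $A$ respects $\Phi$, and use the row-equivalence of $AS$ and $\wh{U}AS$ to conclude $\cM(\Phi) = \cM(\Phi')$. The paper's version is merely terser, omitting the explicit verification of conditions (a)--(e) that you spell out.
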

\begin{proof}
	By linearity $U\Lambda$ is an additive subgroup of $\bF^X$ that is closed under $\Gamma$-scalings. Let $A \in \bF^{B \times E}$ respect $\Phi$ and $S$ be an $(E,Z,Y_1)$-shift matrix. Let $\wh{U} = U \oplus I_{B-X}$. Then $\wh{U}A$ respects $\Phi'$  and $AS$ is row-equivalent to $\wh{U}AS$. Thus for each matrix conforming to $\Phi$ there is a row-equivalent matrix conforming to $\Phi'$, so $\cM(\Phi) = \cM(\Phi')$.  
\end{proof}

The third let us project $\Delta$ using certain rows of $A_1$. 

\begin{lemma}\label{makeskew}
	Let $\Phi = \templatecrap$ be a template over a field $\bF$ and let $(X_0,X_1)$ be a partition of $X$ for which $\Lambda[X_1] = 0$. Let $W = \rowspace(A_1[X_1])$ and $V$ be a complementary subspace of $W$ in $\bF^{C \cup Y_0 \cup Y_1}$. Let $\psi \colon \bF^{C \cup Y_0 \cup Y_1} \to V$ be the projection map $w+v \mapsto v$.  Then $\Phi$ is equivalent to the template $\Phi' = (\Gamma,C,X,Y_0,Y_1,A_1,\Lambda,\psi(\Delta))$. 
\end{lemma}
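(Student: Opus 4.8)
The plan is to mimic the structure of the proof of Lemma~\ref{unitary}: show that starting from a matrix respecting $\Phi$ and applying a row operation supported on $X$, one obtains a matrix respecting $\Phi'$, so that the two classes of conforming matroids coincide. The key observation is that since $\Lambda[X_1] = 0$, any column $p$ of $\Lambda$ (which occupies the rows in $X$) is supported entirely on $X_0$; this means we are free to add arbitrary multiples of the rows in $X_1$ to the rows in $X_0$ without disturbing condition (e) (the "columns from $\Lambda$" part of the matrix). The point of such row operations is precisely to replace each row of $A_1[X_0]$ by its image under $\psi$, i.e.\ to kill the $W = \rowspace(A_1[X_1])$-component.

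First I would fix a matrix $A \in \bF^{B \times E}$ respecting $\Phi$ and an $(E,Z,Y_1)$-shift matrix $S$, so $M = M(AS) \con C \del Y_1$ is a typical matroid conforming to $\Phi$. Each row of $A_1[X_0] = A[X_0, C \cup Y_0 \cup Y_1]$ lies in $\bF^{C \cup Y_0 \cup Y_1}$ and can be written uniquely as $w_x + v_x$ with $w_x \in W$ and $v_x \in V$; since $W = \rowspace(A_1[X_1])$, each $w_x$ is an $\bF$-linear combination of the rows of $A_1[X_1]$. Let $U \in \bF^{X \times X}$ be the nonsingular (in fact unipotent, being block lower-triangular with identity diagonal blocks) matrix that, acting on the left, adds exactly the combination $-w_x$ (expressed via rows of $X_1$) to row $x$ for each $x \in X_0$, and fixes every row in $X_1$. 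Apply Lemma~\ref{unitary} with this $U$: then $\Phi$ is equivalent to $(\Gamma,C,X,Y_0,Y_1,UA_1,\Delta,U\Lambda)$. Now I must check that this last template is literally $\Phi' = (\Gamma,C,X,Y_0,Y_1,A_1,\Delta,\psi(\Delta))$ --- wait, that is not what we want; the third coordinate of $\Phi'$ should still be $A_1$ and the last should be $\psi(\Delta)$, but $UA_1 \ne A_1$ in general and $U\Lambda$ need not equal $\psi(\Delta)$.

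So the correct route is more subtle, and here is where I expect the main obstacle: the lemma as stated keeps $A_1$ and $\Lambda$ fixed and changes $\Delta$ to $\psi(\Delta)$, which means the equivalence cannot come from a single global row operation of the Lemma~\ref{unitary} type --- instead it must come from \emph{matching up} matrices conforming to $\Phi$ with those conforming to $\Phi'$ via a row operation that \emph{depends on which rows of $A[B-X, C\cup Y_0\cup Y_1]$ have been chosen from $\Delta$}. Concretely, given $A$ respecting $\Phi$, I would build $A''$ from $A$ by adding to each row $y \in B - X$ a suitable $\bF$-linear combination of the rows in $X_1$ so that row $y$ of $A''[B-X, C \cup Y_0 \cup Y_1]$ equals $\psi$ of row $y$ of $A[B-X,\,\cdot\,]$; this is possible because each such row lies in $\Delta \subseteq W \oplus V$ and its $W$-component is a combination of rows of $A_1[X_1]$. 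Such a row operation is supported on rows outside $X_0$ and within the $(C\cup Y_0\cup Y_1)$-columns, so: it fixes $A[X]$, hence preserves condition (b) and the form of $A_1$; it fixes the $Z$-columns and the frame/$\Lambda$-part since those have zero entries in $X_1$-rows after the standard normalization (here one uses $\Lambda[X_1]=0$ to guarantee the $\Lambda$-columns are untouched, and that the $\Gamma$-frame and unit-vector columns have no $X$-rows at all); and it replaces each $\Delta$-row by its $\psi$-image, so the rows of $A''[B-X,C\cup Y_0\cup Y_1]$ lie in $\psi(\Delta)$, giving that $A''$ respects $\Phi'$. Since row operations do not change the row space, $M(A) = M(A'')$ and likewise after applying any shift matrix $S$ (the row operation commutes past $S$ up to another row operation, exactly as in Lemma~\ref{unitary}), so $\cM(\Phi) \subseteq \cM(\Phi')$. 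The reverse inclusion $\cM(\Phi')\subseteq\cM(\Phi)$ is easier: every matrix respecting $\Phi'$ already respects $\Phi$, because $\psi(\Delta) \subseteq \Delta + W$ and one can add back the missing $W$-components using rows of $A_1[X_1]$ to land inside $\Delta$ --- or, more cleanly, observe that $\psi(\Delta) \subseteq \Delta$ is false in general but $\Delta \subseteq \psi(\Delta) + W$ is automatic, so the symmetric argument with $-$ in place of $+$ in the row operation gives the containment. The one genuine check to be careful about is that $\psi(\Delta)$ is still an additive subgroup of $\bF^{C\cup Y_0\cup Y_1}$ closed under $\Gamma$-scaling, which is immediate since $\psi$ is $\bF$-linear and $\Delta$ has those properties; hence $\Phi'$ is a bona fide template and the equivalence follows.
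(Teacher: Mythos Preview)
Your approach is correct and is essentially identical to the paper's: add suitable $\bF$-linear combinations of the rows in $X_1$ to each row in $B-X$ so as to replace every $\Delta$-row by its $\psi$-image, using $\Lambda[X_1]=0$ to guarantee the frame, unit-column, and $\Lambda$-blocks are untouched and hence the resulting matrix respects $\Phi'$ while remaining row-equivalent to the original. The paper's proof is simply the terse version of this, skipping your exploratory detour through Lemma~\ref{unitary} and leaving the (symmetric) reverse inclusion implicit.
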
 
\begin{proof}
	By linearity, the set $\psi(\Delta)$ is an additive subgroup closed under $\Gamma$-scalings. If $A \in \bF^{B \times E}$ respects $\Phi$ then let $A'$ be the matrix obtained from $A$ by applying $\psi$ to each row $u \in \Delta$ of $A[B-X,C \cup Y_0 \cup Y_1]$. Clearly $A'$ respects $\Phi$, and $A'$ is row-equivalent to $A$, since we can obtain $A'$ from $A$ by adding elements of $\row(A[X_1])$ to rows in $B-X$. Therefore every matrix respecting $\Phi$ is row-equivalent to one respecting $\Phi'$, so $\Phi$ and $\Phi'$ are equivalent. 
\end{proof}

The next lemma generically simplifies the structure of $\Delta$. 
\begin{lemma}\label{magic}
    Every template over a finite field $\bF$ is equivalent to a template $\Phi = \templatecrap$ for which there exists $C' \subseteq C$ such that $\Delta = \Gamma (\bF_p^{C'}) \times \{0\}^{(C-C') \cup Y_0 \cup Y_1}$.
\end{lemma}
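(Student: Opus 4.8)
The plan is to take an arbitrary template $\Phi = \templatecrap$ over $\bF$ and massage $\Delta$ into the stated normal form using the row-operations and projections already available (Lemmas~\ref{unitary} and~\ref{makeskew}), together with the contraction move of Lemma~\ref{contracttemplate}, possibly at the cost of enlarging $C$. First I would observe that $\Delta$ is a subspace of $\bF^{C \cup Y_0 \cup Y_1}$ closed under $\Gamma$-scaling; since $\bF$ is finite and $\Delta$ is a finite $\bF_p$-vector space closed under multiplication by $\Gamma$, it is in fact a module over the subring $\bF_p[\Gamma] \subseteq \bF$ generated by $\Gamma$, which is a field $\bF' $ with $\bF_p \subseteq \bF' \subseteq \bF$. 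So $\Delta$ has an $\bF'$-basis, say $d_1, \dots, d_m$, and $\Delta = \Gamma\text{-span} = \bigoplus \bF' d_i$ (here $\Gamma$ generates $\bF'$ multiplicatively-and-additively). The goal is to arrange a coordinate set $C'$ on which these basis vectors restrict to the standard basis and outside of which they vanish.

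The key steps, in order. (1) Introduce new coordinates: enlarge $C$ by a set $\wh C$ of size $m = \dim_{\bF'}\Delta$ and enlarge $X$ by a set $\wh X$ of the same size, extending $A_1$ so that $A_1[\wh X, \wh C]$ is an identity-type bijection matrix, $A_1[X,\wh C] = 0$, $A_1[\wh X, C\cup Y_0\cup Y_1] = 0$, and $\Lambda$ is extended by $\{0\}^{\wh X}$; one checks this does not change the conforming class (the new rows of $X$ are forced to $0$ on all columns of a conforming matrix outside $\wh C$, and on $\wh C$ they are pinned by $A_1$, while $\Delta[\wh C]$ can be set to $0$). This is exactly the hypothesis configuration of Lemma~\ref{contracttemplate} with the roles reversed, so it is reversible. (2) Now use Lemma~\ref{unitary} with a suitable $U$ acting on $\wh X$ (and trivially on $X$) to perform row operations; more to the point, I would instead perform \emph{column}-type operations on $\Delta$ by adding a multiple of a row of $A_1[\wh X]$ — but since $A_1[\wh X,\wh C]$ is an identity block and $\Delta$'s rows can be freely modified by adding elements of $\rowspace(A_1[\wh X])$ (as in the proof of Lemma~\ref{makeskew}, because $\Lambda[\wh X] = 0$), I can replace each $d \in \Delta$ by $d$ minus the appropriate $\bF'$-combination of the unit rows on $\wh C$, driving the $(C\cup Y_0\cup Y_1)$-part of a spanning set of $\Delta$ into $\wh C$. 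Concretely: apply Lemma~\ref{makeskew} with $X_1 = \wh X$, $W = \rowspace(A_1[\wh X]) = \bF^{\wh C}$ projected appropriately, so that $\psi(\Delta)$ has support only in $\wh C$ after we have first used the $\bF'$-basis to arrange $\Delta[\wh C]$ to be all of $\Gamma(\bF')^{\wh C}$... This is the step that needs care.

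The main obstacle, and where I expect the real work to be, is step (2): making precise the interplay between "$\Delta$ closed under $\Gamma$-scaling" and the normal form $\Gamma(\bF_p^{C'})$. The statement as written has $\bF_p^{C'}$, not $\bF'^{C'}$, so if $\Gamma$ generates a proper intermediate field $\bF'$ strictly larger than $\bF_p$, one cannot literally realize $\Delta$ as $\Gamma(\bF_p^{C'})$ on a single block — instead one must spend $[\bF':\bF_p]$ coordinates of $C'$ per $\bF'$-basis vector of $\Delta$, writing each $\bF'$-coordinate in terms of an $\bF_p$-basis of $\bF'$ and noting $\Gamma \cdot \bF_p\text{-span of that basis} = \bF'$ when $\Gamma$ generates $\bF'$. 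So the bookkeeping is: pick an $\bF_p$-basis of $\bF_p[\Gamma]$, tensor it against an $\bF'$-basis of $\Delta$ to get an $\bF_p$-basis of $\Delta$ of size $m[\bF':\bF_p]$, take $\wh C$ of that size, and check $\Gamma(\bF_p^{\wh C})$ maps onto $\Delta$ under the obvious surjection — this requires exactly that $\Gamma$ additively generates $\bF'$ over $\bF_p$, which holds by definition of $\bF' = \bF_p[\Gamma]$ together with $1 \in \Gamma$. After that, one invokes Lemma~\ref{contracttemplate} backwards (the $\wh C, \wh X$ we added satisfy its hypotheses for $\Phi'$) to confirm equivalence, and sets $C' = \wh C$. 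The remaining steps — verifying conditions (i)--(iv) for the new template and that every matrix conforming to one conforms to the other after row-equivalence — are routine given Lemmas~\ref{contracttemplate}, \ref{unitary}, and~\ref{makeskew}.
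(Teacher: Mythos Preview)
Your overall architecture is right: enlarge $C$ by a fresh block $\wh{C}$, enlarge $X$ by a matching block $\wh{X}$ with $\Lambda[\wh{X}] = 0$, and use Lemma~\ref{contracttemplate} to certify equivalence. But step~(2) has a genuine gap. You set $A_1[\wh{X}, C \cup Y_0 \cup Y_1] = 0$ and $A_1[\wh{X},\wh{C}]$ to an identity block, and then try to use Lemma~\ref{makeskew} (row-operations from $\wh{X}$) to ``drive the $(C \cup Y_0 \cup Y_1)$-part of $\Delta$ into $\wh{C}$''. That cannot work: the rows of $A_1[\wh{X}]$ are supported \emph{only} on $\wh{C}$, so adding their multiples to a vector $d \in \Delta$ changes only $d[\wh{C}]$ and leaves $d[C \cup Y_0 \cup Y_1]$ untouched. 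Equivalently, in Lemma~\ref{makeskew} the projection $\psi$ is \emph{away} from $W = \rowspace(A_1[\wh{X}]) \subseteq \{0\}^{C \cup Y_0 \cup Y_1} \times \bF^{\wh{C}}$, so $\psi(\Delta)$ lands in a complement of $W$, which carries all of the old coordinates---exactly the opposite of what you want.

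The missing idea is to store the generators of the original $\Delta$ \emph{inside} $A_1[\wh{X}]$. Concretely: take a generating set $D$ for $\Delta$, let $A_\Delta$ be the matrix whose rows are the elements of $D$, and set $A_1[\wh{X}, C \cup Y_0 \cup Y_1] = A_\Delta$ (not zero) while $A_1[\wh{X},\wh{C}] = P$ is a nonsingular $\bF_p$-matrix. Now define the \emph{new} $\Delta$ to be $\Gamma(\bF_p^{\wh{C}}) \times \{0\}$ from the outset. For any row $(0,w)$ with $w \in \Gamma(\bF_p^{\wh{C}})$, the row operation subtracting $wP^{-1}$ times the $\wh{X}$-rows yields $(wP^{-1}A_\Delta,\,0)$, which lies in the old $\Delta \times \{0\}^{\wh{C}}$ because $wP^{-1}$ is a $\Gamma$-scaling of an $\bF_p$-combination of generators; surjectivity onto the old $\Delta$ is immediate since $D$ generates. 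Lemma~\ref{contracttemplate} then shows this intermediate template (with old $\Delta \times \{0\}^{\wh{C}}$) is equivalent to the original. Your $\bF' = \bF_p[\Gamma]$ discussion is more structure than needed here; an $\bF_p$-generating set for $\Delta$ suffices, and the $\Gamma$-closure of $\bF_p^{\wh{C}}$ is already built into the target.
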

\begin{proof}
    Let $\Phi' = (\Gamma,C',X',Y_0,Y_1,A_1',\Delta',\Lambda')$ be a template over a finite field $\bF$. Let $D$ be a generating set for $\Delta'$, and let $A_{\Delta} \in \bF^{\wh{X} \times (Y_0 \cup Y_1 \cup C)}$ be a matrix whose set of rows is $D$, where $\wh{X}$ is a $|D|$-element set. Let $\wh{C}$ be a set of size $|\wh{X}|$ and let $P \in (\bF_p)^{\wh{X} \times \wh{C}}$ be nonsingular. Let $C = C' \cup \wh{C}$ and $X = X' \cup \wh{X}$. Let $\Lambda = \Lambda' \times \{0\}^{\wh{X}}$ and $\Delta = \Gamma(\bF_p^{\wh{C}}) \times \{0\}^{Y_0 \cup Y_1 \cup C'}$. Finally let 
    \begin{center}$A_1 = $ \begin{tabular}{c|c|c|}
    \tcol{} & \tcol{$Y_0 \cup Y_1 \cup C'$} & \tcol{$\wh{C}$} \\
    \cline{2-3}
    $\wh{X}$ & $A_{\Delta}$ & $P$ \\
    \cline{2-3}
    $X$ & $A_1'$ & $0$\\
    \cline{2-3}
    \end{tabular}.\end{center}
    Note that $\Delta'$ and $\Lambda'$ are additive subgroups closed under $\Gamma$-scalings; we argue that the template $\Phi = \templatecrap$, which satisfies the required condition by choice of $\Delta$, is equivalent to $\Phi'$. 
    
    Define a map $\psi\colon \Gamma (\bF_p^{\wh{C}}) \to \Delta'$ by $\psi(w) = wP^{-1}A_{\Delta}$. Note that $\psi(w)$ is some $\Gamma$-scaling of an $\bF_p$-linear combination of vectors in $D \subseteq \Delta'$ so has range contained in $\Delta'$; moreover, since $D$ is a generating set, for every $u \in \Delta'$ there is some $w \in \bF_p^{\wh{C}}$ for which $\psi(w) = u$; thus $\psi$ is surjective. 
    
    Let $\wh{\Phi} = (\Gamma,C,X,Y_0,Y_1,A_1,\Delta' \times \{0\}^{\wh{C}},\Lambda)$. By Lemma~\ref{contracttemplate}, the templates $\wh{\Phi}$ and $\Phi'$ are equivalent. Let $A \in \bF^{B \times E}$ respect $\Phi$. Each row of the submatrix $A[B-X,C \cup Y_0 \cup Y_1]$ has the form $(0_{C' \cup Y_0 \cup Y_1},w)$ where $w \in \Gamma(\bF_p^{\wh{C}})$; let $\varphi(A)$ be the matrix obtained by replacing each such row with the row $(\psi(w),0^{\wh{C}}) \in \Delta$. It is clear that $\varphi(A)$ respects $\wh{\Phi}$; moreover, by the surjectivity of $\psi$, for every $\wh{A}$ respecting $\wh{\Phi}$ there is a matrix $A$ respecting $\Phi$ for which $A' = \psi(A)$. Finally, the matrices $A$ and $A'$ are row-equivalent by construction of $\psi$, so for any $(E,Z,Y_1)$-shift matrix $S$ the matrices $AS$ and $\psi(A)S$ are row-equivalent. Therefore $\cM(\Phi) = \cM(\wh{\Phi})$. Since $\wh{\Phi}$ is equivalent to $\Phi'$, the lemma follows. 
     \end{proof}

 We say a template $\Phi = \templatecrap$ over $\bF$ is \emph{$Y$-reduced} if $\Delta[C] = \Gamma(\bF_p^C)$ and $\Delta[Y_0 \cup Y_1] = \{0\}$, and there is a partition $(X_0,X_1)$ of $X$ for which $\bF_p^{X_0} \subseteq \Lambda[X_0]$ and $\Lambda[X_1] = \{0\}$.
 
\begin{lemma}
	Every template is equivalent to a $Y$-reduced template. 
\end{lemma}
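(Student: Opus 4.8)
The plan is to bring a given template into $Y$-reduced form by three successive applications of the manipulation lemmas above, arranged so that no step undoes an earlier one. Since the defining condition splits into a requirement on $\Delta$ (namely $\Delta[C]=\Gamma(\bF_p^C)$ and $\Delta[Y_0\cup Y_1]=\{0\}$) and a requirement on $\Lambda$ (the existence of the partition of $X$), and since Lemma~\ref{unitary} alters only $A_1$ and $\Lambda$ while fixing $\Gamma,C,X,Y_0,Y_1$ and $\Delta$, I would normalise $\Delta$ first and $\Lambda$ last.

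To normalise $\Delta$: first apply Lemma~\ref{magic} to reach an equivalent template with $\Delta=\Gamma(\bF_p^{C'})\times\{0\}^{(C-C')\cup Y_0\cup Y_1}$ for some $C'\subseteq C$. This already gives $\Delta[Y_0\cup Y_1]=\{0\}$, so it remains to delete the coordinates $C-C'$, on which $\Delta$ vanishes. To do so I would apply Lemma~\ref{unitary} with a nonsingular $U\in\bF^{X\times X}$ chosen as follows: let $\wh C\subseteq C-C'$ index an $\bF$-basis of the column space of $A_1[X,C-C']$, pick $\wh X\subseteq X$ with $A_1[\wh X,\wh C]$ nonsingular, and let $U$ clear $A_1[X-\wh X,\wh C]$ to zero by adding suitable $\bF$-combinations of the rows in $\wh X$. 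Because every column of $A_1[X,C-C']$ lies in the $\bF$-span of those indexed by $\wh C$, after this operation $A_1[X-\wh X,C-C']=0$ and $\rank(A_1[\wh X,C-C'])=|\wh X|$; together with $\Delta[C-C']=\{0\}$ this is exactly the hypothesis of Lemma~\ref{contracttemplate} applied to the pair $(\wh X,C-C')$. Applying that lemma yields an equivalent template in which the index set $C$ has become (a copy of) $C'$, $X$ has shrunk to $X-\wh X$, and $\Delta$ is the restriction $\Gamma(\bF_p^{C'})\times\{0\}^{Y_0\cup Y_1}$; after relabelling we have $\Delta[C]=\Gamma(\bF_p^C)$ and $\Delta[Y_0\cup Y_1]=\{0\}$, as required.

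To normalise $\Lambda$: a single further application of Lemma~\ref{unitary} suffices, and since it leaves $\Delta,C,Y_0,Y_1$ untouched the conclusions just obtained persist. As $\bF$ has characteristic $p$, the group $\Lambda$ is automatically an $\bF_p$-subspace of $\bF^X$. Let $V=\langle\Lambda\rangle_{\bF}$ and $d=\dim_\bF V$; choose $v_1,\dotsc,v_d\in\Lambda$ forming an $\bF$-basis of $V$, extend to an $\bF$-basis $v_1,\dotsc,v_{|X|}$ of $\bF^X$, and let $U\in\bF^{X\times X}$ be the nonsingular matrix sending each $v_i$ to the $i$th unit vector. Taking $X_0$ to be the first $d$ coordinates and $X_1$ the rest, $UV=\bF^{X_0}\times\{0\}^{X_1}$, so $(U\Lambda)[X_1]=\{0\}$, while $U\Lambda$ contains the unit vector on each element of $X_0$ and hence contains $\bF_p^{X_0}$, since $U\Lambda$ is an $\bF_p$-subspace. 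Applying Lemma~\ref{unitary} with this $U$ makes the template $Y$-reduced.

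The step I expect to be the main obstacle is the middle one: one must verify that after the $\bF$-row operations \emph{every} coordinate of $C-C'$, and not merely those in the chosen basis $\wh C$, simultaneously satisfies the vanishing and rank conditions of Lemma~\ref{contracttemplate} — which is precisely why $\wh C$ is taken to span the full column space, so that the column dependencies force the required zeros on $X-\wh X$. The first and third steps are essentially immediate given Lemmas~\ref{magic} and~\ref{unitary}.
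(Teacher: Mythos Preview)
Your proof is correct and follows essentially the same three-step approach as the paper's: apply Lemma~\ref{magic} to put $\Delta$ in the required form on a subset of $C$, use Lemma~\ref{unitary} followed by Lemma~\ref{contracttemplate} to eliminate the superfluous coordinates of $C$, and finish with Lemma~\ref{unitary} to normalise $\Lambda$. Your explicit justification that the clearing step zeros out \emph{all} of $A_1[X-\wh X,\,C-C']$ (not just the columns in $\wh C$) via column dependencies is exactly what the paper encodes implicitly in its block-matrix form for $A_1'$, and your treatment of $\Lambda$ via an $\bF$-basis of $\langle\Lambda\rangle_{\bF}$ drawn from $\Lambda$ matches the paper's ``maximal linearly independent subset'' argument.
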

\begin{proof}
	Let $\Phi = \templatecrap$ be a template over a finite field $\bF$ with prime subfield $\bF_p$. We may assume by Lemma~\ref{magic} that there is a partition $(C_0,C_1)$ of $C$ for which $\Delta = \Gamma(\bF_p^{C_0}) \times \{0_{C_1 \cup Y_0 \cup Y_1}\}$. Now $A_1$ is row-equivalent to a matrix 
	\begin{center}$A_1' = $ \begin{tabular}{c|c|c|c|}
	\tcol{} & \tcol{$C_0$} & \tcol{$C_1$} & \tcol{$Y_0 \cup Y_1$} \\
	\cline{2-4}
	$X-\wh{X}$ & \multirow{2}{*}{$P_1$} & $Q$ & \multirow{2}{*}{$P_2$} \\
	\cline{3-3}
	$\wh{X}$ & & $0$ & \\
	\cline{2-4}
	\end{tabular}\end{center}
	where $\wh{X} \subseteq X$ and $\rank(Q) = |X-\wh{X}|$. Let $U \in \bF^{X \times X}$ be nonsingular with $A_1' = UA_1$. By Lemma~\ref{unitary}, $\Phi$ is equivalent to the template \[\Phi' = (\Gamma,C,X,Y_0,Y_1,A_1',U\Lambda,\Gamma (\bF_p^{C_0}) \times \{0_{C_1 \cup Y_0 \cup Y_1}\}).\] 
	
	Let $\wh{A}_1 = A_1'[\wh{X},C_0 \cup Y_0 \cup Y_1]$. Let  $\wh{\Lambda} = (U_{\Lambda})[\wh{X}]$. Let $\wh{\Delta} = \Gamma(\bF_p^{C_0}) \times \{0\}^{Y_0 \cup Y_1}$. Let $\wh{\Phi} = (\Gamma,C_0,\wh{X},Y_0,Y_1,\wh{A}_1,\wh{\Lambda},\wh{\Delta})$. By Lemma~\ref{contracttemplate} we have $\cM(\Phi') = \cM(\wh{\Phi})$. Finally, by mapping a maximal linearly independent subset of $\wh{\Lambda}$ to a set of unit vectors, we see that there is a nonsingular matrix $\wh{U}_{\Lambda} \in \bF^{\wh{X} \times \wh{X}}$ and a partition $(\wh{X}_0,\wh{X}_1)$ of $\wh{X}$ for which the additive subgroup $\Lambda' = \wh{U}_\Lambda \wh{\Lambda}$ satisfies $\Lambda'[\wh{X}_1] = \{0\}$ and contains all unit vectors supported in $\wh{X}_0$, which implies that $\bF_p^{\wh{X}_0} \subseteq  \Lambda'[\wh{X}_0]$. By Lemma~\ref{unitary}, we see that $\wh{\Phi}$, and therefore $\Phi$, is equivalent to the $Y$-reduced template $(\Gamma,C_0,\wh{X},Y_0,Y_1,\wh{U}_{\Lambda}\wh{\Lambda},\wh{\Delta})$. 
\end{proof}

 A template $\Phi = \templatecrap$ over a field $\bF$ is \emph{reduced} if there is a partition $(X_0,X_1)$ of $X$ such that 
\begin{itemize}
	\item $\Delta = \Gamma(\bF_p^C \times \Delta')$ for some additive subgroup $\Delta'$ of $\bF^{Y_0 \cup Y_1}$, 
	\item $\bF_p^{X_0} \subseteq \Lambda[X_0]$ while $\Lambda[X_1] = \{0\}$ and $A_1[X_1,C] = 0$, and
	\item the rows of $A_1[X_1]$ are a basis for a subspace skew to $\Delta$.
\end{itemize}

\begin{lemma} \label{equivalent}
	Every template is equivalent to a reduced template. 
\end{lemma}
\begin{proof}
	Let $\Phi = \templatecrap$ be a template over a field $\bF$. We may assume that $\Phi$ is $Y$-reduced; let $(X_0,X_1)$ be the partition of $X$ for which $\Lambda[X_1] = \{0\}$ and $\bF_p^{X_0} \subseteq \Lambda[X_0]$. Let $Y = Y_0 \cup Y_1$. By applying elementary row-operations to $A_1$ without adding any multiples of rows in $X_0$ to rows in $X_1$, we obtain a matrix 
	\begin{center}$A_1' = $ \begin{tabular}{c|c|c|c|}
	\tcol{} & \tcol{$C'$} & \tcol{$C''$} & \tcol{$Y$} \\
	\cline{2-4}
	$X_1''$ & \multicolumn{2}{|c|}{$0$} & \multirow{3}{*}{$A_Y$} \\
	\cline{2-3}
	$X_1'$ & $Q$ & \multirow{2}{*}{$P_1$} & \\
	\cline{2-2}
	$X_0$ & $0$ & & \\
	\cline{2-4}
	\end{tabular},\end{center}
	where $(C',C'')$ and $(X_1',X_1'')$ are partitions of $C$ and $X$ respectively, and $Q$ is a nonsingular matrix. Let $U \in \bF^{X \times X}$ be a nonsingular matrix for which $U[X_1,X_0] = 0$ and $A_1' = UA_1$. By Lemma~\ref{unitary}, the template $\Phi$ is equivalent to $\Phi' = (\Gamma,C,X,Y_0,Y_1,A_1',U\Lambda,\Delta)$.   
	
	 Define a linear map $\psi \colon \Delta \to \bF^{C \cup Y}$ by \[\psi(w) = (0_{C'},w[C''],w[C']Q^{-1} A_Y[X_1'])\] and let $\Delta'' = \psi(\Delta)$. Note that $\Delta'' = \Gamma(\bF_p^{C''} \times \Delta_0)$ for some additive subgroup $\Delta_0$ of $\bF^Y$. Let $\Phi'' = (\Gamma,C,X,Y_0,Y_1,A_1',U_\Lambda,\Delta')$.  For each matrix $A' \in \bF^{B \times E}$ respecting $\Phi'$, let $A' \in \bF^{B \times E}$ be obtained by replacing each row $w \in \Delta$ of $A'[B-X,C \cup Y_0 \cup Y_1]$ with the row $\psi(w)$. Now $A''$ both respects $\Phi''$ and is row-equivalent to $A'$; thus each matrix respecting $\Phi'$ is row-equivalent to a matrix respecting $\Phi''$, so $\Phi'$ and $\Phi''$ are equivalent. Let $\wh{X} = X_0 \cup X_1''$ and $\wh{C} = C''$. Let $\wh{\Delta} = \Delta[\wh{C} \cup Y]$ and $\wh{A}_1 = A_1'[\wh{X},\wh{C} \cup Y]$. Since $\Delta''[C'] = 0$,  Lemma~\ref{contracttemplate} implies that the template $\Phi''$ is equivalent to $\wh{\Phi} = (\Gamma,\wh{C},\wh{X},Y_0,Y_1,\wh{A}_1,\wh{\Delta},(U\Lambda)[\wh{X}]).$ 
	 
	 Since $U[X_1,X_0] = 0$ and $\Lambda[X_1] = \{0\}$ while $\bF_p^{X_0} \times \{0_{X_1}\} \subseteq \Lambda$, we know that $\Lambda$ contains a basis for $\bF^{X_0} \times \{0_{X_1}\}$ and so $U\Lambda$ does also, and moreover $U{\Lambda}[\wh{X}]$ contains a basis for $\bF^{X_0} \times \{0_{X_1''}\}$. Let $U' \in \bF^{\wh{X} \times \wh{X}}$ be a nonsingular matrix mapping this basis to the standard basis; therefore the set $\wh{\Lambda} = U'(U\Lambda[\wh{X}])$ satisfies $\wh{\Lambda}[X_1''] = \{0\}$ and $\bF_p^{X_0} \subseteq \wh{\Lambda}[X_0]$. By Lemma~\ref{unitary} the template $\wh{\Phi}$ is equivalent to $\wh{\Phi}' = (\Gamma,\wh{C},\wh{X},Y_0,Y_1,U'\wh{A}_1,\wh{\Lambda},\wh{\Delta})$. 
	 
	 Let $V$ be a complementary subspace of $W = \row(A_1[X_1''])$ in $\bF^{\wh{C} \cup Y_0 \cup Y_1}$ and let $\varphi\colon \bF^{\wh{C} \cup Y_0 \cup Y_1} \to V$ be the associated projection map $\varphi(v+w) \mapsto v$. Since $\wh{A}_1[X_1'',\wh{C}] = 0$ we have $W[\wh{C}] = 0$ and therefore for each $u \in \bF^{\wh{C}}$ and $v \in \bF^{\wh{Y}}$ we have $\varphi(u,v) = (u,\varphi(v))$ and so $\varphi(\wh{\Delta}) = \Gamma(\bF_p^{\wh{C}} \times \wh{\Delta}_0)$ for some additive subgroup $\Delta_0 \subseteq \bF^Y$. By construction $\varphi(\wh{\Delta})$ is skew to $\row(\wh{A}_1)[X_1'']$, and moreover by Lemma~\ref{makeskew} the template $\wh{\Phi}$ is equivalent to the template $\wh{\Phi}' = (\Gamma,\wh{C},\wh{X},Y_0,Y_1,\wh{A}_1,\varphi(\wh{\Delta}),\wh{\Lambda})$.

	 The construction of $\wh{\Phi}'$ is such that $\varphi(\wh{\Delta})$ and $\wh{\Lambda}$ have the required structure for a reduced template with respect to the partition $(X_0,X_1'')$ of $\wh{X}$, save the property that the rows of $\wh{A}_1[X_1'']$ are themselves linearly independent. Since $\Lambda[X_1''] = \{0\}$, this property can easily be obtained by considering a maximal set $\wh{X}_1 \subseteq X_1''$ for which $\wh{A}_1[\wh{X}_1]$ has linearly independent rows, and then restricting $\wh{\Lambda}$ and $\wh{A}_1$ to just the rows in $X_0 \cup \wh{X}_1$. Doing so yields a template $\Psi$ with the property that every matrix respecting $\Psi$ is row-equivalent to one respecting $\wh{\Phi}'$; it follows that the reduced template $\Psi$ is equivalent to $\wh{\Phi}'$ and therefore to $\Phi$. 
\end{proof}

	\begin{corollary}\label{strongstructure}
	The sets $\bT$ and $\bT^*$ of templates given by Hypothesis~\ref{structure1} can be taken to be contain only reduced templates. 
	\end{corollary}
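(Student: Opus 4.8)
The plan is straightforward: chain together the template-equivalence lemmas just proved to upgrade the templates supplied by Theorem~\ref{structure}. Recall that Theorem~\ref{structure} gives finite sets $\bT$ and $\bT^*$ of $\bF$-frame templates with the stated two properties, and the final lemma of the section asserts that \emph{every} template is equivalent to a reduced one. So first I would, for each $\Phi \in \bT$, apply that lemma to obtain a reduced template $\Phi^{\mathrm{red}}$ with $\cM(\Phi^{\mathrm{red}}) = \cM(\Phi)$, and set $\bT^{\mathrm{red}} = \{\Phi^{\mathrm{red}} \colon \Phi \in \bT\}$; do the same for $\bT^*$ to get $(\bT^*)^{\mathrm{red}}$ (noting that $\cM^*(\Phi^{\mathrm{red}}) = \cM^*(\Phi)$ since duality is applied to the same class of conforming matroids).

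Next I would check that the two bullet points of Theorem~\ref{structure} are preserved under this substitution. Since $\cM(\Phi^{\mathrm{red}}) = \cM(\Phi)$ for each $\Phi$, we have $\bigcup_{\Phi \in \bT^{\mathrm{red}}} \cM(\Phi) = \bigcup_{\Phi \in \bT} \cM(\Phi)$, and likewise for the starred families; hence the first bullet (containment in $\cM$) and the second bullet (every simple vertically $k$-connected $M \in \cM$ with $r(M) \ge k$ lies in one of the two unions) both transfer verbatim, with the same integer $k$. This shows $\bT^{\mathrm{red}}$ and $(\bT^*)^{\mathrm{red}}$ witness Theorem~\ref{structure} and consist only of reduced templates, which is exactly the statement of Corollary~\ref{strongstructure}.

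There is essentially no obstacle here; the content is entirely in the preceding lemmas (particularly Lemma~\ref{magic} and the two ``reduced''/``$Y$-reduced'' lemmas), and Corollary~\ref{strongstructure} is just the observation that applying a class-preserving transformation termwise to a finite set of templates preserves any property of the \emph{family} that depends only on the classes $\cM(\Phi)$ and $\cM^*(\Phi)$ — which both conditions in Theorem~\ref{structure} do. If anything needs care, it is only the bookkeeping remark that the finiteness of $\bT$ and $\bT^*$ is retained (a termwise image of a finite set is finite) and that the construction in the reduction lemmas terminates, which is already established there. So the proof is a single short paragraph invoking the final lemma of Section~\ref{dowlingsection}'s successor on each template in $\bT \cup \bT^*$.
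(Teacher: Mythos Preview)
Your proposal is correct and matches the paper's approach: the corollary is stated without proof in the paper, immediately following the lemma that every template is equivalent to a reduced one, and your argument supplies exactly the obvious details the paper leaves implicit. The only observation needed is that equivalence of templates preserves $\cM(\Phi)$ (and hence $\cM^*(\Phi)$), so replacing each template in $\bT \cup \bT^*$ by an equivalent reduced one preserves both conclusions of Theorem~\ref{structure}.
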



\section{Density and Subclasses}

Templates are especially nice over prime fields due to the fact that $\Lambda$ and $\Delta$ are subspaces; in this section we investigate them further. For a template $\Phi = \templatecrap$, define the \emph{complexity} of $\Phi$ by $c(\Phi) = |X \cup Y_0 \cup Y_1 \cup C|$. We use this measure in the next two lemmas to bound the density of matroids conforming and co-conforming to reduced templates. The primal bound is quadratic in rank.

\begin{lemma}\label{templatedensity}
	Let $\Phi = \templatecrap$ be a reduced template over a prime field $\bF = \GF(p)$ with $\dim(\Lambda) = t$ and $c(\Phi) = c$. Then every matroid $M \in \cM(\Phi)$ satisfies \[\elem(M) \le f_{p,|\Gamma|,t}(r(M)) + p^{t+1}c(r(M)\\ + c).\] 
\end{lemma}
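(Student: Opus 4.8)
The plan is to bound the number of columns of a matrix $A$ conforming to $\Phi$, split according to the structure of a conforming matrix. Recall that $M\in\cM(\Phi)$ means $M\cong M(AS)\con C\del Y_1$ for some $A$ respecting $\Phi$ and shift matrix $S$; since contraction and deletion and simplification only decrease $\elem$, it suffices to bound $\elem$ of a matroid represented by a matrix respecting $\Phi$ (the shift matrix $S$ is nonsingular, hence $M(AS)=M(A)$ up to relabelling columns, which changes nothing). So fix $A\in\bF^{B\times E}$ respecting $\Phi$, with $X\subseteq B$, $|X|=\dim\Lambda = t$ (here we use that $\Phi$ is reduced, so we may take $\Lambda$ to span $\bF^{X_0}$ and $|X|$ close to $t$; more carefully $\dim\Lambda=t$ is given as a hypothesis), and set $r=r(M(A))$.

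First I would separate the columns of $A$ into three groups: the columns in $Y_0\cup Y_1\cup C$ (there are at most $c$ of these, contributing at most $c$ to $\elem$); the columns in $Z$, which restricted to $B-X$ are unit vectors and restricted to $X$ are zero, so there are at most $r(M)$ pairwise non-parallel such columns; and the remaining columns, which form a matrix of the shape $\binom{P}{F}$ with $P$'s columns in $\Lambda$ and $F$ a $\Gamma$-frame matrix. For this last block, the key point is that two such columns are parallel in $M(A)$ only if they are genuinely parallel as vectors, and the number of pairwise non-parallel columns of a matrix $\binom{P}{F}$ with $P\in\bF^{X\times E}$ having all columns in a $t$-dimensional space $\Lambda$ and $F$ a $\Gamma$-frame matrix of rank at most $r-$(something) is at most $p^{t}\bigl(|\Gamma|\binom{r'}{2}+r'\bigr)$ where $r'=\rank(F)\le r$; this is exactly the count $|W^t(r)|$ from the paragraph defining $\DG(n,\Gamma)^t$, i.e.\ $f_{p,|\Gamma|,t}(r) + \tfrac{p^t-1}{p-1} = f_{p,|\Gamma|,t}(r)+O(p^t)$. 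Adding the $r(M)$ unit columns from $Z$ and the $c$ columns from $Y_0\cup Y_1\cup C$ gives a bound of the form $f_{p,|\Gamma|,t}(r(M)) + p^{t}\bigl(\text{const}\cdot r(M)\bigr)+c$, and being generous with constants (there is slack of roughly $r(M)$ from the $Z$-columns and $c\le c(r(M)+c)$ trivially) yields the stated $f_{p,|\Gamma|,t}(r(M)) + p^{t+1}c(r(M)+c)$.

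The one subtlety, and **the hardest part**, is the bookkeeping on ranks: the $\Gamma$-frame block $F$ has some rank $r'\le r(M)$, but the columns from $P$ (living in $\Lambda$) and from $Z$ can raise the rank of the whole matrix above $r'$, so one must be careful that the frame count $f_{p,|\Gamma|,r'-?}$ is dominated by $f_{p,|\Gamma|,t}(r(M))$ plus the linear error term. The clean way to handle this is to note that the submatrix $A[B-X, E-(Z\cup Y_0\cup Y_1\cup C)] = F$ is a $\Gamma$-frame matrix on at most $r(M)$ rows (after deleting redundant rows of $B-X$), so it has at most $|\Gamma|\binom{r(M)}{2}+r(M)$ pairwise non-parallel columns; each such column extends to at most $p^{t}$ pairwise non-parallel columns of $\binom{P}{F}$ since $P$-parts lie in a $t$-dimensional space. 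Hence that block contributes at most $p^{t}\bigl(|\Gamma|\binom{r(M)}{2}+r(M)\bigr) = p^t|W(r(M))|$. This slightly overshoots $f_{p,|\Gamma|,t}(r(M)) = p^t|W(r(M)-t)| + \tfrac{p^t-1}{p-1}$, but the difference $p^t\bigl(|W(r(M))|-|W(r(M)-t)|\bigr)$ is $O(p^t\cdot t\cdot r(M))$, which is absorbed by the error term $p^{t+1}c(r(M)+c)$ since $t\le c$. So the strategy is: (i) reduce to a matrix respecting $\Phi$; (ii) partition columns into $Z$-type, $(Y_0\cup Y_1\cup C)$-type, and frame-type; (iii) bound each class, using $p^t$-fold blow-up over a $\Gamma$-frame matrix on $\le r(M)$ rows for the third; (iv) collect terms and absorb all linear-in-$r(M)$ slack into $p^{t+1}c(r(M)+c)$.
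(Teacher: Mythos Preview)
Your overall strategy---partition the columns into the frame block, the $Z$-block, and $Y_0\cup Y_1\cup C$, and bound each separately---is exactly the paper's approach. However, your reduction step contains a genuine error: the claim that $M(AS)=M(A)$ ``up to relabelling columns'' because $S$ is nonsingular is false. The shift matrix $S$ acts on the \emph{right}, performing column operations, and column operations do not preserve the matroid (you may be thinking of row operations, i.e.\ left multiplication). Concretely, in $A$ the $Z$-columns are unit vectors in $\bF^{B-X}$ with zero $X$-part, and many may coincide; applying $S$ replaces each $A[z]$ by $A[z]+A[y]$ for some $y\in Y_1$, and two equal unit columns shifted by distinct $Y_1$-columns become non-parallel in $AS$. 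Thus $\elem(M(AS)|Z)$ can exceed $\elem(M(A)|Z)$ by a factor of roughly $|Y_1|$, so your bound of ``at most $r(M)$'' for the $Z$-block is a bound on the wrong matrix and does not control $\elem(M)$.

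The paper fixes this by working with $AS$ directly: each column of $(AS)[Z]$ is the sum of a column of $A[Z]$ and a column of $A[Y_1]$, so $(AS)[Z]$ has at most $|Y_1|$ times the number of distinct unit columns in $A[B-X,Z]$; the latter is at most $\rank(A)\le r(M)+|C\cup Y_1|\le r(M)+c$ (this rank inequality is another piece of bookkeeping your sketch elides---$r(M)$ is the rank \emph{after} contracting $C$ and deleting $Y_1$). This yields $\elem(M|(Z\cup Y_0))\le c(r(M)+c)$. The frame block is bounded by $f_{p,|\Gamma|,t}(\rank(AS))\le f_{p,|\Gamma|,t}(r(M)+c)$, and the stated error term follows from the elementary estimate $f_{p,|\Gamma|,t}(x+c)-f_{p,|\Gamma|,t}(x)\le p^t(p-1)c(x+c)$. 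One smaller point: in a reduced template $|X_0|=t$ but $|X|=t+|X_1|$ need not equal $t$; this does not damage the frame-block argument since what matters is that $\col(P)\subseteq\Lambda$ is $t$-dimensional, but your parenthetical assertion that $|X|=t$ is incorrect.
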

\begin{proof}
	Let $M = M(AS)\con C \del Y_1 \in \cM(\Phi)$, where $A \in \bF^{B \times E}$ respects $\Phi$ and $S$ is an $(E,Z,Y_1)$-shift matrix. Note that $r(M) \ge \rank(AS) - |C \cup Y_1| \ge \rank(A) - c$, and since $\rank(A)$ is at least the number of distinct columns in the submatrix $A[B-X,Z]$, we see that $A[B-X,Z]$ has at most $r(M) + c$ distinct columns. Every column of $(AS)[Z]$ is the sum of a column of $A[Y_1]$ and a column of $A[Z]$, so $(AS)[Z]$ has at most $|Y_1|(r(M)+c)$ distinct columns and $(AS)[Z \cup Y_0]$ has at most $|Y_1|(r(M)+c) + |Y_0| \le c(r(M)+c)$ distinct columns. Thus $\elem(M|(Y_0 \cup Z)) \le \elem(M(AS)|(Y_0 \cup Z)) \le  c(r(M)+c)$. Let $F = E - (C \cup Y_0 \cup Y_1 \cup Z)$. We have $M(AS)|F \in \cG(\Gamma)^t$, so $\elem(M|F) \le \elem(M(AS)|F) \le f_{p,|\Gamma|,t}(\rank(AS)) \le f_{p,|\Gamma|,t}(r(M)+c)$. Now for all $x$ we have 
	\begin{align*}
	f_{p,|\Gamma|,t}(x+c) - f_{p,|\Gamma|,t}(x) &= p^t|\Gamma|((x-t)c + \tfrac{1}{2}(c^2+c)) \\
	&\le p^t(p-1)c(x+c),
	\end{align*}
	since $|\Gamma| \le p-1$ and $c \le c^2$. Combining the above estimates we have 
	\begin{align*}
	\elem(M) &\le \elem(M|F) + \elem(M|(Y_0 \cup Z)) \\
	&\le f_{p,|\Gamma|,t}(r(M)+c) + c(r(M)+c) \\
	&\le f_{p,|\Gamma|,t}(r(M)) + p^t(p-1)c(r(M) + c) + c(r(M)+c)\\
	&\le f_{p,|\Gamma|,t}(r(M)) + p^{t+1}c(r(M)+c),
	\end{align*}
	giving the bound. 
\end{proof}

For the dual bound, which is linear in rank, we need an easy lemma bounding the density of the dual of a frame matroid. (The lemma applies to any $\Gamma$-frame matroid over any field.)

\begin{lemma}\label{coframedensity}
	If $M^*$ is a frame matroid, then $\elem(M) \le 3r(M)$.
\end{lemma}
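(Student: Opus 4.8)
The plan is to use the dual characterization of frame matroids directly. If $M^*$ is a frame matroid, then it has a representation by a $\Gamma$-frame matrix $B$ over some field, for some subgroup $\Gamma$; that is, every column of $B$ has weight at most $2$, with unit entries in the weight-$\le 1$ case and of the shape $\gamma e_j - e_i$ in the weight-$2$ case (the field and $\Gamma$ are whatever the paper's general definition allows, since the statement is over any field). What I want to bound is $\elem(M)$, the number of points of the simplification of $M = (M^*)^*$, equivalently the number of parallel classes of $M$, equivalently the number of equivalence classes of columns of $B$ under the relation "spans the same point of $M^*$'s dual" — but it is cleaner to phrase this in terms of $B$ directly.

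First I would recall that two elements $e,f$ of $M$ are parallel if and only if they are a cocircuit of size $2$ in $M^*$, i.e. $\{e,f\}$ is a series pair in the frame matroid $M^*$; and an element $e$ is a loop of $M$ iff it is a coloop of $M^*$. So $\elem(M)$ equals the number of coloops of $M^*$ that I discard (at most one parallel class worth, but actually loops of $M$ all collapse — contribute $0$ or $1$ depending on convention; with the paper's convention $\si$ deletes loops and keeps one from each parallel class, so loops contribute nothing) plus the number of series classes of non-coloop elements of $M^*$. The key step is then the combinatorial fact that a $\Gamma$-frame matrix with $r$ rows has at most $3r$ columns up to the series-equivalence relation. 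Concretely: index the rows by a set $V$ (think "vertices"); a weight-$2$ column $\gamma e_j - e_i$ is associated to the pair $\{i,j\}$, a weight-$1$ column to a single vertex, a weight-$0$ column is a coloop-direction we can ignore. Two parallel columns of $B$ are in the same series class; more importantly, I claim that among the columns associated to a fixed vertex $v$ (all unit columns at $v$, which are mutually parallel — one series class) and, for the weight-$2$ columns, I need a genuine argument.

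The main obstacle, and the heart of the proof, is bounding the number of series classes of weight-$2$ columns. The natural move is a graph-theoretic one: form the graph $G$ on vertex set $V$ whose edges are the pairs $\{i,j\}$ arising from weight-$2$ columns of $B$. A standard fact about frame (= bias/gain-graphic) matroids is that a series class in $M^*$ corresponds to an edge of $G$ that is a "bridge-like" configuration — more precisely, contracting $e$ in $M^*$ and then its being in series with $f$ forces $\{i,j\}$ and $\{k,\ell\}$ to be such that deleting both edges disconnects things appropriately. I would instead argue more crudely to get the constant $3$: show that in any series class of weight-$2$ columns we may pick one representative edge, and the chosen edges $e_1,\dots,e_s$ have the property that $e_1,\dots,e_s$ together with a spanning forest of the remaining structure is independent in the cycle matroid sense, or use that each series contraction reduces rank by $1$ and the total rank is $r(M)$. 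Cleanest: induct on the number of elements, deleting a point of a parallel class and tracking how $r(M^*) = |E(M^*)| - r(M)$ behaves; each non-trivial series class "costs" a rank unit in $M^*$, and since $M^*$ is frame on $r(M)$ "vertices" plus loops, the number of such classes is at most... here one uses that a frame matroid of rank $\rho$ on a vertex set of size $\le \rho+1$ has its edge set decomposable with few series classes — the factor $3$ comes from: at most $r(M)$ classes of unit columns, plus at most $r(M)$ classes of weight-$2$ "forest" edges, plus at most $r(M)$ classes of remaining weight-$2$ edges (one extra per biased-cycle component). I would make this precise by taking a basis $B^*$ of $M^*$ consisting of unit and forest-edge columns (a spanning forest), noting $|B^*| = r(M^*) \le r(M) \le 2r(M)$ after accounting, and then showing every element outside $B^*$ that is not in series with anything else still leaves at most $r(M)$ further series classes, since each lies in a fundamental circuit forcing structure.

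Given the delicacy, for the write-up I would most likely just cite or quickly re-derive the well-known bound $\elem(M) \le 3(r(M)-1)$ or $3r(M)$ for duals of frame matroids via the bias-graph picture — count: parallel classes of $M$ $\leftrightarrow$ series classes of $M^*$ $\leftrightarrow$ (i) vertices carrying a half-edge/loop, $\le r$; (ii) edges of a spanning forest, $\le r-1$; (iii) at most one "unbalanced" extra edge per connected component, $\le r$; total $\le 3r$ — and then note $\elem(M) \le 3r(M)$ follows (loops of $M$ contributing nothing). The one genuine point needing care is (iii): that each $2$-connected bias-graph block contributes at most one series class beyond its spanning tree, which is exactly the statement that a bias matroid on $n$ vertices and $n$ edges (a "bicircular"-type block) has corank $\le 1$; I would verify this from the frame-matrix definition by a short rank computation on such a block. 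That is the step I expect to be the main obstacle.
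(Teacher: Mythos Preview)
Your approach has a genuine gap. The three-part count you propose --- (i) vertices carrying a half-edge, (ii) spanning-forest edges, (iii) one extra edge per component --- bounds each part by the number of \emph{vertices} of the gain graph, which is essentially $r(M^*)$, not $r(M)$. You even write explicitly that ``$|B^*| = r(M^*) \le r(M)$'', but this inequality is false in general: a frame matroid can have corank (here $r(M)$) much smaller than its rank. So your decomposition yields at best something like $\elem(M) \le 3r(M^*)$, which is not the statement you need. The bias-graph bookkeeping you sketch for (iii) is also not substantiated, but that is secondary; the fatal issue is the $r(M)$ versus $r(M^*)$ confusion.

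The paper's proof is entirely different and much shorter. Assume $M$ is simple and take a frame matrix $A$ for $M^*$ with exactly $r^*(M)$ rows. If any row of $A$ has weight at most $2$, then the one or two columns supported on that row contain a cocircuit of $M^*$, giving a loop or parallel pair in $M$, contradicting simplicity; so every row has weight at least $3$. Double-counting nonzero entries (each column has at most two) gives $2|M| \ge 3r^*(M)$, hence
\[
r(M) = |M| - r^*(M) \ge |M| - \tfrac{2}{3}|M| = \tfrac{1}{3}|M|.
\]
The idea you are missing is to count row weights rather than series classes: the inequality $r^*(M) \le \tfrac{2}{3}|M|$ is precisely what converts information about the frame side (naturally measured by $r^*(M)$) into the desired bound in terms of $r(M)$.
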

\begin{proof}
	We may assume that $M$ is simple. Let $A$ be a frame representation of $M^*$ with $r^*(M)$ rows. If some row of $A$ has weight less than $3$ then $M^*$ has a coloop or series pair so $M$ is not simple. Thus $A$ has at least $3r^*(M)$ nonzero entries, so the number $|M|$ of columns of $A$ is at least $\tfrac{3}{2}r^*(M)$. Therefore $\tfrac{r(M)}{\elem(M)} = \tfrac{|M|-r^*(M)}{|M|} = 1 - \tfrac{r*(M)}{|M|} \ge \tfrac{1}{3}$. 
\end{proof}

\begin{lemma}\label{dualdensity}
	Let $\Phi$ be a template over a finite field $\bF$. If $M \in \cM^*(\Phi)$ then $\elem(M) \le |\bF|^{c(\Phi)}(3r(M) + 6c(\Phi)+1)$. 
\end{lemma}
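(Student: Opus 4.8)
The plan is to bound $\elem(M)$ for $M \in \cM^*(\Phi)$ by relating $M^*$ to a matroid conforming to $\Phi$ and using Lemma~\ref{coframedensity} to control the `frame part' while bounding the `small' part $C \cup Y_0 \cup Y_1 \cup Z$ by brute force. So let $N \in \cM(\Phi)$ with $M = N^*$, and write $N = M(AS) \con C \del Y_1$ where $A \in \bF^{B \times E}$ respects $\Phi$ and $S$ is an $(E,Z,Y_1)$-shift matrix. Set $c = c(\Phi)$. We may assume $M$ is simple, i.e.\ $N$ is cosimple. The key structural observation is that $N$ has a frame-matroid restriction of `small corank': deleting the coordinates $Z \cup Y_0$ from $N(AS)$ (equivalently contracting them in $N^*$) leaves a matroid $M(AS)\del(Z\cup Y_0) \con C \del Y_1$ that lies in $\cG(\Gamma)^t$, and hence (after deleting the $t$ rows $X$, or treating $P$ as extra) is close to a frame matroid. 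More precisely, $N' := N \del (Z \cup Y_0)$ is obtained from a $\Gamma$-frame matrix by prepending at most $t \le c$ rows and deleting $C \cup Y_1$ (which has size $\le c$), so $r^*(N')$ and $r^*(N)$ differ by a bounded amount, and $(N')^*$ is within a bounded number of deletions/contractions of a cographic-of-frame object.

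First I would make this precise by splitting off the coordinates in $S := Z \cup Y_0 \cup Y_1 \cup C$, which number at most $2c$ (indeed $|Z|$ can be large, so this is the subtle point — see below). Actually the right move is: $E(M) = E(N)$ is partitioned into $F = E - (C \cup Y_0 \cup Y_1 \cup Z)$ and the rest; $M(AS)|F \in \cG(\Gamma)^t$, so its dual restricted appropriately is governed by Lemma~\ref{coframedensity} once we peel off the at most $t$ rows coming from $\Lambda$. Concretely, for each of the $\le |\bF|^t$ cosets determined by the $P$-part (the $\Lambda$-rows), the corresponding chunk of $M(AS)|F$ is a genuine $\Gamma$-frame matroid, whose dual has $\elem \le 3r$ by Lemma~\ref{coframedensity}; summing over cosets and accounting for rank bookkeeping gives a term like $|\bF|^t \cdot 3(r(M) + O(c))$. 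The remaining coordinates $C \cup Y_0 \cup Y_1 \cup Z$: here $C \cup Y_0 \cup Y_1$ contributes at most $c$ elements outright, but $Z$ can be large. However, in $N = M(AS)\con C \del Y_1$ the point is that $N$ is \emph{cosimple}; a large $Z$ would force parallel classes or loops after the relevant contraction unless the columns stay distinct, and distinctness of the $\le |Y_1|(r+c)$ columns of $(AS)[Z]$ (as in the proof of Lemma~\ref{templatedensity}) bounds the contribution — but for the \emph{dual} density we instead want to bound $|E(M)|$, i.e.\ $|E(N)|$, in terms of $r(M) = r^*(N)$. So the argument must run: $|E(N)| = r(N) + r^*(N) = r(N) + r(M)$, and $r(N) \le \rank(AS) \le r(M(AS)|F) + |C \cup Y_0 \cup Y_1 \cup Z| \le \ldots$; combine with a bound on $|E(N)|$ coming from cosimplicity of $N$ applied to the frame part via Lemma~\ref{coframedensity}, and on the $Z \cup Y_0$ part via a direct count.

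The cleanest route, which I would actually pursue: show $M \del (\text{image of } C \cup Y_1) $... rather, show that $M$ has at most $|\bF|^{c}$ disjoint `pieces', on each of which $M$ restricted is, up to a bounded-size deletion/contraction, the dual of a $\Gamma$-frame matroid; apply Lemma~\ref{coframedensity} to each piece to get $\elem \le 3(r + O(c))$ per piece, and sum. The factor $|\bF|^{c}$ absorbs both the $|\bF|^t$ coset count from $\Lambda$ and a further $|\bF|^{|Y_0 \cup Y_1 \cup C|}$ from handling the non-frame columns (each such column, together with the shift structure, multiplies the number of pieces by at most $|\bF|$), and the additive $6c + 1$ absorbs the rank corrections from the at most $c$ contracted/deleted elements on each piece (each contraction or deletion changes rank by $1$, and Lemma~\ref{coframedensity} is applied with a rank that may exceed $r(M)$ by up to $2c$ or so, contributing $3 \cdot 2c = 6c$, plus a $+1$ slack).

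\textbf{Main obstacle.} The genuinely delicate step is controlling the coordinates in $Z$: unlike $C, Y_0, Y_1$, the set $Z$ is not bounded by $c(\Phi)$, so one cannot simply delete it and pay a bounded price. The trick — as in Lemma~\ref{templatedensity} — is that after contracting $C$ the columns indexed by $Z$ become sums of a unit vector (from $A[B-X,Z]$) and a $Y_1$-column, so modulo the bounded set $Y_1$ they are unit vectors; in the \emph{dual}, this means the elements of $Z$ become, up to $|Y_1|$-bounded modifications, coloops/series elements, which a cosimple $M^*=N$ cannot have in quantity. Making this series-reduction argument rigorous in the represented-matroid language, and checking that it costs only a factor $|\bF|^{|Y_1|}$ in the multiplicative constant and an additive $O(c)$, is where the real work lies; everything else is bookkeeping with $r(M) = r^*(N)$, $|E| = r + r^*$, and the two cited density lemmas.
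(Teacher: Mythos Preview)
Your plan does not overcome the obstacle you yourself identify. You never define the ``$|\bF|^c$ pieces'' concretely, and your handling of $Z$ stays at the level of a heuristic: saying that $Z$-columns are ``unit vectors modulo $Y_1$'' and therefore become ``coloops/series elements'' in the dual is not an argument, since simplicity of $M$ places no bound on $|Z|$. The coset idea you sketch (partitioning according to the $\Lambda$-rows) controls the $X$-part but does nothing for the $Z$-columns, and you offer no mechanism by which each piece becomes an actual dual of a frame matroid rather than something merely resembling one.

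The paper sidesteps $Z$ entirely via a low-rank perturbation. Let $A'$ be obtained from $A$ by zeroing all rows in $X$ and all columns in $C \cup Y_0 \cup Y_1$. Then $A'$ is a $\Gamma$-frame matrix with $A'S = A'$, and $AS - A' = (A-A')S$ has rank at most $|X| + |C \cup Y_0 \cup Y_1| \le c$. Two represented matroids whose representations differ by a rank-$\le c$ matrix are both minors of a common matroid $\wh{M}$ using two disjoint $c$-element sets $T_1,T_2$: one gets $M(A')$ as $\wh{M} \con T_1 \del T_2$ and $M(AS)$ as $\wh{M} \del T_1 \con T_2$. Passing to $N = \wh{M} \con C \del Y_1$ and dualising, one has $M = N^* \con T_1 \del T_2$ while $(N \con T_1 \del T_2)^* = N^* \del T_1 \con T_2$ is the dual of a frame matroid, to which Lemma~\ref{coframedensity} applies directly. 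The factor $|\bF|^{c}$ then comes from a single application of the elementary bound $\elem(M_0) \le |\bF|^{|H|}(\elem(M_0 \con H)+1)$ with $H = T_2$, and the additive $6c+1$ from the rank shift $r(N^*) \le r(M) + 2c$. This argument never touches $Z$ at all; the perturbation absorbs it automatically because zeroing $X$-rows already kills the non-frame part of every $Z$-column after the shift.
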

\begin{proof}
	Let $\Phi = \templatecrap$ and $c = c(\Phi)$. Let $M^* \in \cM(\Phi)$, so there exists $A \in \bF^{B \times E}$ respecting $\Phi$ and an $(E,Z,Y_1)$-shift matrix $S$ for which $M = (M(AS) \con C \del Y_1)^*$. Let $A'$ be obtained from $A$ by replacing all rows in $X$ and columns in $C \cup Y_0 \cup Y_1$ by zero; clearly $A' = A'S$ is a $\Gamma$-frame matrix and $\rank((A-A')S) \le \rank(A-A') \le c$. 
	
	Therefore $A' = AS + P$ for a matrix $P$ of rank at most $c$. It follows that $M(A')$ and $M(AS)$ have $\bF$-representations that agree on all but at most $c$ rows, and thus that there is a matroid $\wh{M}$ and a pair of disjoint $c$-element sets $T_1,T_2 \subseteq E(\wh{M})$ such that $\wh{M} \con T_1 \del T_2 = M(A')$ and $\wh{M} \del T_1 \con T_2 = M(AS)$. Let $N = \wh{M} \con C \del Y_1$, so $N \del T_1 \con T_2 = M^*$ and $N \con T_1 \del T_2$ is minor of $M(A')$, so is a $\Gamma$-frame matroid. By Lemma~\ref{coframedensity} we have $\elem(N^* \del T_1 \con T_2) \le 3r(N^* \del T_1 \con T_2)$ and so, since $\elem(M_0) \le |\bF|^{|H|}(\elem(M_0 \con H) +1)$ for every set $H$ in an $\bF$-represented matroid $M_0$, we have 
	\[\elem(N^* \con T_1 \del T_2) \le \elem(N^* \del T_1) \le |\bF|^c(3r(N^* \del T_1 \con T_2)+1) \le |\bF|^c(3r(N^*) + 1).\] 
	But $M = N^* \del T_2 \con T_1$ and $r(N^*) \le r(M) + |T_1 \cup T_2| = r(M) + 2c$, so this gives $\elem(M) \le |\bF|^c(3(r(M)+2c) + 1)$, as required.  
\end{proof}
The next lemma essentially states that, for a reduced template $\Phi$, the class $\ol{\cM(\Phi)}$ contains all matroids whose representation is obtained from a $\Gamma$-frame matrix by appending $\dim(\Lambda)$ rows and $\dim(\Delta)$ columns.

\begin{lemma}\label{subclass}
	If $\Phi = \templatecrap$ is a reduced template over a prime field and $(t,d) = (\dim(\Lambda),\dim(\Delta))$, then $\cG(\Gamma)_d^t \subseteq \ol{\cM(\Phi)}$. 
\end{lemma}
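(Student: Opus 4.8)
The goal is to show $\cG(\Gamma)^t_d \subseteq \ol{\cM(\Phi)}$ when $\Phi$ is reduced with $(t,d) = (\dim\Lambda,\dim\Delta)$. Since $\ol{\cM(\Phi)}$ is minor-closed by definition, and by Lemma~\ref{extensionprojection} every matroid in $\cG(\Gamma)^t_d$ is a minor of a matroid with a representation $\sqbinom{P_2}{P_0}$ where $P_2$ has $t$ rows and $\sqbinom{P_1}{P_0}$ is row-equivalent to a $\Gamma$-frame matrix for some $P_1$ with $d$ rows, it suffices to exhibit, for each such matroid, a matroid conforming to $\Phi$ that has it as a minor. So the plan is: take an arbitrary $\Gamma$-frame matrix $Q \in \bF^{B' \times E'}$, an arbitrary matrix $P$ with $t$ rows indexed by a set disjoint from $B'$, and an arbitrary set of $d$ rows of $Q$ to be ``absorbed'' via $\Delta$; then build a matrix $A$ respecting $\Phi$ and a shift matrix $S$ so that $M(AS)\con C\del Y_1$, after a further contraction/deletion, yields the target matroid.

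First I would unpack the reduced structure: there is a partition $(X_0,X_1)$ of $X$ with $\bF_p^{X_0}\subseteq\Lambda[X_0]$, $\Lambda[X_1]=\{0\}$, $A_1[X_1,C]=0$, the rows of $A_1[X_1]$ forming a basis of a subspace skew to $\Delta$, and $\Delta = \Gamma(\bF_p^C\times\Delta')$. The subspace $\Lambda$, after the row operations implicit in being reduced, contains the coordinate subspace $\bF_p^{X_0}$; since $\bF$ is prime, $\bF_p^{X_0}=\bF^{X_0}$, so $\Lambda\supseteq\bF^{X_0}\times\{0_{X_1}\}$ and $\dim\Lambda = t$ forces $X_1=\varnothing$ and $\Lambda=\bF^{X_0}=\bF^X$ — wait, more carefully, $\dim\Lambda=t=|X_0|+\dim(\text{projection to }X_1\text{ part})$, and since $\Lambda[X_1]=\{0\}$ we get $\Lambda=\bF^{X_0}\times\{0_{X_1}\}$ and $|X_0|=t$. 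Hmm, but then the rows of $A_1[X_1]$ give extra structure; let me not over-commit. The key usable facts: $\Lambda$ is a $t$-dimensional coordinate-type subspace allowing us to put arbitrary columns of $P_2$-type into the $X$-rows, and $\Delta$ has dimension $d$ and contains $\Gamma(\bF_p^C)=\Gamma(\bF^C)$ with $|C|$ close to $d$, letting us realize arbitrary rows of an appended-$d$-rows frame matrix through the $C$-columns and a contraction.

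The main construction: put the $\Gamma$-frame matrix $Q$ into the frame-matrix block $F$; put the columns of $P$ (padded with a unit vector in the $Z$-block, or directly into the $\Lambda$-block since $\Lambda=\bF^{X}$) so that $A[X]$ delivers the rows $P_2$; and use the $C$-columns together with $\Delta$ to encode the $d$ rows $P_1$ — specifically, choose $C$ so the submatrix $A[B-X,C]$ is a bijection matrix scaled appropriately and the rows of $A[B-X, C\cup Y_0\cup Y_1]$ lying in $\Delta$ carry the entries of $P_1$; contracting $C$ then adds those rows into the frame part, which is exactly the passage from $\sqbinom{P_1}{P_0}$ (frame) to $P_0$. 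One checks $M(A)\con C\del Y_1$ (taking $S=I$, i.e. $Z=\varnothing$) equals the matroid $\sqbinom{P_2}{P_0}$ up to the allowed row operations, hence $M(AS)\con C\del Y_1 \in \cM(\Phi)$ contains the target as a minor. The main obstacle is bookkeeping: matching dimensions so that $\Delta$ (dimension exactly $d$) and $C$ (whose size may exceed or differ from $d$ because of the $\Delta'$ part on $Y_0\cup Y_1$) still let us realize an \emph{arbitrary} appended $d$ rows, and verifying that contracting $C$ genuinely performs the row-addition $\sqbinom{P_1}{P_0}\mapsto P_0$ without disturbing the $X$-rows — this is where one needs $A_1[X,C]$ and $\Delta[C]$ to interact correctly (using $\Delta[C]=\Gamma(\bF^C)$ and choosing the $C$-columns of $A_1$ to be zero on $X_0$, consistent with the reduced form). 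Everything else is a routine verification that the constructed $A$ satisfies conditions (a)–(e) of ``respects $\Phi$''.
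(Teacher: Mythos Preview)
Your high-level plan is sound: invoke Lemma~\ref{extensionprojection}, then for each target matroid $N = M\sqbinom{P_2}{P_0}$ build a matrix respecting $\Phi$ so that after contracting $C$ and deleting $Y_1$ (plus some further minor) you recover $N$. That is exactly what the paper does. But three of your concrete choices do not survive contact with a general reduced template.

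First, you cannot ignore $X_1$. You correctly deduce that $\Lambda = \bF^{X_0}\times\{0_{X_1}\}$ with $|X_0|=t$, but this does \emph{not} force $X_1=\varnothing$, and later you quietly revert to ``$\Lambda = \bF^X$''. Since every column in the $\Lambda$-block is zero on $X_1$ and $A_1[X_1,C]=0$, contracting $C$ alone will never eliminate the rows in $X_1$; you must contract columns in $Y_0\cup Y_1$ on which $A_1[X_1]$ has full rank. The paper handles this by choosing a set $\wh{C}\supseteq C$ inside $C\cup Y_0\cup Y_1$ on which the stacked matrix $\sqbinom{A_1[X_1]}{W}$ (with $\row(W)=\Delta$) is nonsingular, and contracting $\wh{C}$ to kill both the $X_1$ rows and encode the $d$ appended rows.

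Second, the shift matrix cannot always be the identity. When $\wh{C}$ meets $Y_1$, those columns are \emph{deleted} in passing from $M(AS)$ to a conforming matroid, so they cannot be contracted directly. The paper copies each such column into $Z$ via an $(E,Z,Y_1)$-shift matrix and contracts the copies instead. Your proposal ``take $S=I$, $Z=\varnothing$'' fails precisely here, and this also dissolves the dimension obstacle you flagged: since $\Delta = \bF^C\times\Delta'$ we have $|C|\le d$, and the missing $d-|C|$ dimensions must come from columns in $Y_0\cup Y_1$.

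Third, $A_1$ is part of the template and is not yours to choose. You cannot ``choose the $C$-columns of $A_1$ to be zero on $X_0$''. The paper instead absorbs $A_1[X_0,\wh{C}]$ by replacing $P_2$ with a row-equivalent $P_2'$ so that after contracting $\wh{C}$ the $X_0$-block returns to $P_2$; this is the purpose of the matrices $Q_1,Q_2$ in the paper's argument.
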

\begin{proof}
	Let $(X_0,X_1)$ be the partition of $X$ certifying that $\Phi$ is reduced; note that $|X_0| = t$. Let $N_0 \in \cG(\Gamma)^t_d$; by Lemma~\ref{extensionprojection} there is a matroid $N$ with an $N_0$-minor, a set $B_0$, a $d$-element set $R$ and matrices $P_1 \in \bF^{R \times F}$ and $P_2 \in \bF^{X_0 \times F}$ and $P_0 \in \bF^{B_0 \times F}$ such that $\sqbinom{P_1}{P_0}$ is row-equivalent to a $\Gamma$-frame matrix, while $N = M\sqbinom{P_2}{P_0}$. We show that $N \in \ol{\cM(\Phi)}$. 
	
	Let $Y = Y_0 \cup Y_1$. Let $W \in \bF^{R \times (C \cup Y)}$ be a matrix with rowspace $\Delta$; note, since $|R| = d$, that $W$ has linearly independent rows. Since $A_1[X_1]$ has row space skew to $\Delta$ and has linearly independent rows, we see that $\sqbinom{A_1[X_1]}{W}$ also has linearly independent rows. Let $\wh{C} \subseteq C \cup Y$ be such that the matrix $Q  = \sqbinom{A_1[X_1]}{W}[\wh{C}]$ is nonsingular. So $|\wh{C}| = d + |X_1|$, and since $\Delta = \bF^C \times \Delta[Y]$, we must have $C \subseteq \wh{C}$. Since $Q$ is nonsingular, there is a matrix $P_2' \in \bF^{X_0 \times F}$ for which the matrices
	\begin{center}$Q_1 = $
		\begin{tabular}{c|c|c|}
			\tcol{} & \tcol{$F$} & \tcol{$\wh{C}$} \\
			\cline{2-3}
			$X_0$ & $P_2'$ & $A_1[X_0,\wh{C}]$ \\
			\cline{2-3}
			$X_1$ & $0$ & \multirow{2}{*}{$Q$} \\
			\cline{2-2}
			$R$ & $P_1$ & \\
			\cline{2-3}
		\end{tabular} \text{ and } $Q_2 = $
		\begin{tabular}{c|c|c|}
			\tcol{} & \tcol{$F$} & \tcol{$\wh{C}$} \\
			\cline{2-3}
			$X_0$ & $P_2$ & $0$ \\
			\cline{2-3}
			$X_1$ & $0$ & \multirow{2}{*}{$Q$} \\
			\cline{2-2}
			$R$ & $P_1$ & \\
			\cline{2-3}
		\end{tabular}
	\end{center}
	are row-equivalent. Let $C_i = \wh{C} \cap Y_i$ for $i \in \{0,1\}$, so $\wh{C} = C \cup C_0 \cup C_1$. We essentially wish to contract $C_1$ from a matroid conforming to $\Phi$, but since the columns in $C_1$ must be deleted, we must `copy' its entries using $Z$. Let $Z$ be a copy of the set $C_1$, let $\{c,d\}$ be a $2$-element set, and consider the matrix 
	\begin{center}
		$A = $ \begin{tabular}{c|c|c|c|c|}
			\tcol{} &  \tcol{$F$} & \tcol{$c$} &\tcol{$Z$} & \tcol{$C \cup Y$}\\
			\cline{2-5}
			$X_0$  & $P_2'$ & \multirow{4}{*}{$0$} & \multirow{4}{*}{$0$} & $A_1[X_0]$ \\
			\cline{2-2} \cline{5-5}
			$X_1$ & $0$ & & & $A_1[X_1]$ \\
			\cline{2-2} \cline{5-5}
			$R$ & $P_1$ & & & $W$ \\
			\cline{2-2} \cline{5-5}
			$B_0$ & $P_0$ & & &\multirow{2}{*}{$0$} \\
			\cline{2-4}
			$d$  & $0$ & $1$ & $\one{Z}$ & \\
			\cline{2-5}
		\end{tabular},
	\end{center}
	where $\one{Z}$ is the all-ones vector in $\bF^Z$. Since $\sqbinom{P_1}{P_0}$ is row-equivalent to a $\Gamma$-frame matrix and $\rowspace(W) \subseteq \Delta$, we see that $A$ is row-equivalent to a matrix $A'$ respecting $\Phi$. Let $E$ be the set of column indices of $A$. Recall that $Z$ is a copy of $C_1 \subseteq Y_1$; let $S$ be the $(E,Z,Y_1)$-shift matrix so that $AS$ is obtained from $A$ by adding each column of $A[C_1]$ to its corresponding column in $A[Z]$. Thus
	\begin{center}
		$AS = $ \begin{tabular}{c|c|c|c|c|}
			\tcol{} &  \tcol{$F$} & \tcol{$c$} &\tcol{$Z$} & \tcol{$C \cup Y$}\\
			\cline{2-5}
			$X_0$  & $P_2'$ & \multirow{4}{*}{$0$} & \multirow{3}{*}{$V$} & $A_1[X_0]$ \\
			\cline{2-2} \cline{5-5}
			$X_1$ & $0$ & & & $A_1[X_1]$ \\
			\cline{2-2} \cline{5-5}
			$R$ & $P_1$ & & & $W$ \\
			\cline{2-2} \cline{4-5}
			$B_0$ & $P_0$ & & $0$ &\multirow{2}{*}{$0$} \\
			\cline{2-4}
			$d$  & $0$ & $1$ & $\one{Z}$ & \\
			\cline{2-5}
		\end{tabular},
	\end{center}
	where $V$ is a copy of $\sqbinom{A_1}{W}[C_1]$. So  $(AS)[R \cup X_0 \cup X_1,F \cup Z \cup C \cup C_0]$ is a copy of the matrix $Q_1$ defined earlier. Let
	\[M_0  = (M(AS) \con (\{c\} \cup Z \cup C \cup C_0))|F.\]
	Let $\wh{A} = AS[F \cup \{c\} \cup Z \cup C \cup C_0]$. By construction of $P_2'$ we can perform row-operations on $\wh{A}$ to replace $P_2'$ by $P_2$ and replace the submatrix $\wh{A}[X_0,Z \cup C \cup C_0]$ with zero. Then one can contract the set $\{c\} \cup Z \cup C \cup C_0$ from $M(\wh{A})$ by first removing from $\wh{A}$ row $d$ and column $c$, then removing all columns in $Z \cup C \cup C_0$ and all rows in $X_1 \cup R$. Thus $M_0 = M\sqbinom{P_2}{P_0} = N$. Since $N_0$ is a minor of $N = M_0$ and $M_0$ is a minor of $M(AS) \con C \del Y_1$, we have $N_0 \in \ol{\cM(\Phi)}$ and the result follows.
\end{proof}

\section{The Main Result}

We now prove our main result; a reduced template in which $\dim(\Lambda) = t$ either describes a `degenerate' class, a subclass of $\cG(\Gamma)^t$, or a class whose minor-closure contains $\sqpinch{t}$ or some $\xpinch{t}$ .

\begin{theorem}\label{templatetech}
	Let $\Phi = \templatecrap$ be a reduced template over a prime field $\bF$ and let $t = \dim(\Lambda)$ and $c = c(\Phi)$. Either 
	\begin{enumerate}
		\item\label{m0} $\cM(\Phi)$ contains no vertically $(c+1)$-connected matroid of rank at least $c+1$, 
		\item\label{m1} $\ol{\cM(\Phi)} \subseteq \cG(\Gamma)^t$,
		\item\label{m2} $\Gamma = \bF^*$ and $\sqpinch{t} \subseteq \ol{\cM(\Phi)}$, or
		\item\label{m3} $\Gamma \ne \bF^*$ and $\xpinch{t} \subseteq \ol{\cM(\Phi)}$ for some $x \in \bF^* - \Gamma$. 
	\end{enumerate}
\end{theorem}
\begin{proof}	
	Suppose that (\ref{m2}) and (\ref{m3}) do not hold. By Lemma~\ref{dowlingextension}, there is thus a matroid $N \in \cG(\Gamma)^t$ (in fact, of the form $\DG(n,\Gamma)^t$) with $r(N) \ge 2|X|$, such that no simple rank-$r(N)$ extension of $N$ is in $\ol{\cM(\Phi)}$.  Since every such extension is in $\cG(\Gamma)^t_1$, we may assume that $\cG(\Gamma)^t_1 \not\subseteq \ol{\cM(\Phi)}$ and thus, by Lemma~\ref{subclass}, that $\Delta = \{0\}$. Since $\dim(\Delta) \ge |C|$ in a reduced template, we also have $C = \varnothing$. 
	
	Let $(X_0,X_1)$ be the partition of $X$ certifying that $\Phi$ is reduced, and let $h = |X_1|$. Since $N \in \cG(\Gamma)^t$ we have $N = M\sqbinom{P_0}{Q}$ for some $P_0 \in \bF^{X_0 \times F}$ and some $\Gamma$-frame matrix $Q \in \bF^{B_0 \times F}$, where $F = E(N)$ and $B_0$ satisfies $|B_0| > |X-X_0| = |X_1|$. We may assume that $\rank\sqbinom{P_0}{Q} = |B_0| + |X_0|$, as otherwise we can remove redundant rows and rescale columns. Suppose now that (\ref{m0}) does not hold. 
	
	\begin{claim}
		$\col(A_1[X_1,Y_0]) \subseteq \col(A_1[X_1,Y_1])$. 
	\end{claim}
	\begin{proof}[Proof of claim:]
		Suppose not. Since $\Lambda[X_1] = 0$, every matrix $A \in \bF^{B \times E}$ conforming to $\Phi$ satisfies $\col(A[X_1,E-(Y_0 \cup Y_1)]) \subseteq \col(A[X_1,Y_1])$, and so $\rank(A[X_1,E-(Y_0 \cup Y_1)]) < \rank(A[X_1,E-Y_1])$, which gives $r(M(A) \del (Y_1 \cup Y_0)) < r(M(A) \del Y_1)$. Therefore $\lambda_{M(A) \del Y_1}(Y_0) < r_{M(A)}(Y_0) \le |Y_0|$. If $r(M(A) \del Y_1) > |Y_0|$ then it follows that $M(A) \del Y_1$ is not vertically $(|Y_0| + 1)$-connected, so (\ref{m0}) holds, a contradiction.
	\end{proof}

	Recall that the rows of $A_1[X_1,Y_0 \cup Y_1]$ are linearly independent. We may therefore assume by the first claim, and applying  row-operations to $A_1[X_1]$, that there exists $T \subseteq Y_1$ so that $A_1[X_1,T] = -J$  for some bijection matrix $J$. Recall that $|B_0| > |X_1|$; let $X_1' \subseteq B_0$ be a set whose elements we associate with those in $X_1$, and let $J' \in \bF^{X_1' \times T}$ be a copy of $J$. Let $r \in B_0 - X_1'$. Consider the matrix $M = M(A)$, where 
	\begin{center}
		$A= $\begin{tabular}{c|c|c|c|c|}
			\tcol{} & \tcol{$F$} & \tcol{$T$} &\tcol{$Y_1-T$} & \tcol{$Y_0$}\\
			\cline{2-5}
			$X_1$ & $0$ & $-J$ & \multirow{2}{*}{$A_1[X,Y_1-T]$} & \multirow{2}{*}{$A_1[Y_0]$}\\
			\cline{2-3}
			$X_0$ & $P_0$ & $A_1[X_0,T]$ &  &   \\ 
			\cline{2-5}
			$X_1'$ & \multirow{3}{*}{$Q$} & $J'$ & \multirow{2}{*}{$0$} & \multirow{3}{*}{$0$} \\
			\cline{3-3}
			$B_0-(X_1' \cup \{r\})$ & & $0$ & &  \\
			\cline{3-4}
			$r$ & & $0$ & $\one{Y_1-T}$ &  \\
			\cline{2-5}
		\end{tabular}.
	\end{center}
	The matroid $M(A)$ is isomorphic to a matroid conforming to $\Phi$, as $A$ is obtained from a certain matrix conforming to $\Phi$ (in which $Z$ is a copy of $Y_1$) by removing the columns in $Y_1$ and then renaming the set $Z$ as $Y_1$. Thus $M \in \cM(\Phi)$. 
	
	Let $M' = M \con T$. Since $J'$ is a copy of $J$, we have $M = M(A')$ where	
	\begin{center}
		$A'= $\begin{tabular}{c|c|c|c|}
			\tcol{} & \tcol{$F$} &\tcol{$Y_1-T$} & \tcol{$Y_0$}\\
			\cline{2-4}
			$X_0$ & $P_0$  & $H_1$ & $H_2$   \\ 
			\cline{2-4}
			$X_1'$ & \multirow{3}{*}{$Q$}  & $A_1[X_1,Y_1-T]$ & $A_1[X_1,Y_0]$  \\
			\cline{3-4}
			$B_0-(X_1' \cup \{r\})$ & & $0$ & \multirow{2}{*}{$0$}  \\
			\cline{3-3}
			$r$ &  & $\one{Y_1-T}$ &  \\
			\cline{2-4}
		\end{tabular},
	\end{center}
	where the sets $X_1'$ and $X_1$ are identified, and $H_1,H_2$ are some matrices. Now $M' = M \con T \in \ol{\cM{(\Phi)}}$, but also $M' = M(A')$ is evidently a rank-$(r(N))$ extension of the matroid $N = M\sqbinom{P_0}{Q}$. By the choice of $N$, we may assume that $M'$ contains no simple extension, so in fact $A'[B_0]$ is a $\Gamma$-frame matrix up to column scalings. Thus every column of $A_1[X_1,Y_1-T]$ is either a zero vector or a weight-$1$ vector whose nonzero entry is in $-\Gamma$. The same is true of $A_1[X_1,T] = -J$.
	
	
	Consider an arbitrary matroid $M \in \cM(\Phi)$, so $M = M(A) \del Y_1$ for some matrix $A \in \bF^{B \times E}$ conforming to $\Phi$. Every column of $A[X_1,Y_0]$ is parallel to a column of a $\Gamma$-frame matrix, so the same is true of $A[X_1 \cup B,Y_0]$. The columns of $A[B,Z]$ are unit vectors and the columns of $A[X_1,Z]$ are columns of $A[X_1,Y_1]$ so each is either a zero vector or a weight-$1$ vector with nonzero entry in $-\Gamma$; thus, each column of $A[X_1 \cup B,Z]$ is parallel to a column of a $\Gamma$-frame matrix. The same is evidently true of $A[X_1 \cup B,E-(Y_0 \cup Y_1 \cup Z)]$; thus, $A[X_1 \cup B,E-Y_1]$ is a $\Gamma$-frame matrix up to column scalings. Since $|X_0| = t$ it follows that $M(A) \del Y_1 \in \cG(\Gamma)^t$ and so (\ref{m1}) holds. 
	\end{proof}

\section{Prime Geometries}
	
	We now use Theorem \ref{templatetech} to prove a general result that will easily imply Theorems~\ref{maintwo}, ~\ref{mainthree} and ~\ref{mainodd}, subject to Hypothesis \ref{structure1}.
	We emphasize that this is the only section of the paper that assumes Hypothesis \ref{structure1}.
	Note that this result implies Theorem~\ref{simplifiedmain} because $\xpinch{t}$ and $\sqpinch{t}$ both contain simple extensions of $\DG(n,\Gamma)^t$ for  all $n \ge t$.
	

	\begin{theorem}\label{bigmain}
		Suppose Hypothesis \ref{structure1} holds.
		Let $\bF = \GF(p)$ be a prime field and $\cM$ be a quadratic minor-closed class of $\bF$-represented matroids. There is a subgroup $\Gamma$ of $\bF^*$ and some $t,\alpha \in \nni$ so that $\cG(\Gamma)^t \subseteq \cM$ and 
		\[f_{p,|\Gamma|,t}(n) \le h_{\cM}(n) \le f_{p,|\Gamma|,t}(n) + \alpha n\]
		 for all sufficiently large $n$. Moreover, either
		 \begin{enumerate}[(a)]
		 	\item\label{ma0} $\alpha = 0$ and every extremal matroid $N$ in $\cM$ of sufficiently large rank is isomorphic to $\DG(r(N),\Gamma)^t$, 
			\item\label{ma1} $\Gamma = \bF^*$ and $\sqpinch{t} \subseteq \cM$, or
			\item\label{ma2} $\Gamma \ne \bF^*$ and $\xpinch{t} \subseteq \cM$ for some $x \in \bF^* - \Gamma$. 
		\end{enumerate}
	\end{theorem}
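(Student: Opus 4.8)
Since $\cM$ is quadratic it is a proper minor-closed class and in particular does not contain all $\GF(p)$-representable matroids (those have exponential density), so the structure theorem (Theorem~\ref{structure}) together with Corollary~\ref{strongstructure} yields an integer $k$ and finite sets $\bT,\bT^*$ of \emph{reduced} templates with $\bigcup_{\Phi\in\bT}\cM(\Phi)\cup\bigcup_{\Psi\in\bT^*}\cM^*(\Psi)\subseteq\cM$ -- hence also $\ol{\cM(\Phi)}\subseteq\cM$ for each $\Phi\in\bT$, as $\cM$ is minor-closed -- such that every simple vertically $k$-connected $M\in\cM$ with $r(M)\ge k$ lies in one of these classes. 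I would first enlarge $k$ so that $k>c(\Phi)+1$ for all templates in $\bT\cup\bT^*$. For $\Phi\in\bT$ write $t_\Phi=\dim(\Lambda_\Phi)$; Lemma~\ref{templatedensity} gives a constant $\beta_\Phi$ with $\elem(M)\le f_{p,|\Gamma_\Phi|,t_\Phi}(r(M))+\beta_\Phi(r(M)+1)$ for all $M\in\cM(\Phi)$, while Lemma~\ref{dualdensity} bounds the density of every member of $\bigcup_{\Psi\in\bT^*}\cM^*(\Psi)$ linearly in rank. A short computation shows that distinct pairs $(g,t)$ with $1\le g\le|\bF|-1$ give polynomials $f_{p,g,t}$ with \emph{distinct} leading coefficient $\tfrac12 gp^t$; therefore the pointwise maximum $g(n)=\max_{\Phi\in\bT}f_{p,|\Gamma_\Phi|,t_\Phi}(n)$ equals, for all large $n$, $f_{p,|\Gamma|,t}(n)$ for a well-defined pair $(\Gamma,t)$ -- this is the pair in the statement. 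Let $\cT_0=\{\Phi\in\bT:(|\Gamma_\Phi|,t_\Phi)=(|\Gamma|,t)\}$; it is nonempty because $\cM$ is quadratic (otherwise every highly connected matroid of large rank in $\cM$ would be the dual of a template matroid, forcing $h_{\cM}$ to be subquadratic by Theorem~\ref{connreduction} and Lemma~\ref{dualdensity}), and each $\Phi\in\cT_0$ has $\Gamma_\Phi=\Gamma$ (subgroups of the cyclic group $\bF^*$ are determined by order) and $t_\Phi=t$.

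For the lower bound, pick $\Phi_0\in\cT_0$ and set $d_0=\dim(\Delta_{\Phi_0})$; Lemma~\ref{subclass} gives $\cG(\Gamma)^t\subseteq\cG(\Gamma)^t_{d_0}\subseteq\ol{\cM(\Phi_0)}\subseteq\cM$, so $h_{\cM}(n)\ge h_{\cG(\Gamma)^t}(n)=f_{p,|\Gamma|,t}(n)$ for $n\ge t$. For the upper bound, let $\alpha$ be an integer exceeding $\max_{\Phi\in\bT}\beta_\Phi$ and the dual constants, and suppose $h_{\cM}(n)>g(n)+\alpha n$ for infinitely many $n$. Applying Theorem~\ref{connreduction} to a genuine quadratic lying just above $g(n)+\alpha n$ produces, for every $k'$, a simple vertically $k'$-connected $M\in\cM$ with $r(M)\ge k'$ and $\elem(M)>g(r(M))+\alpha r(M)$; taking $k'\ge k$ large, the structure theorem places $M$ in some $\cM(\Phi)$ (the dual case is excluded by linear density once $r(M)$ is large), and Lemma~\ref{templatedensity} gives $\elem(M)\le f_{p,|\Gamma_\Phi|,t_\Phi}(r(M))+\beta_\Phi(r(M)+1)\le g(r(M))+\alpha r(M)$ for $r(M)$ large, a contradiction. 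Hence $h_{\cM}(n)\le f_{p,|\Gamma|,t}(n)+\alpha n$ for all large $n$.

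For the trichotomy, apply Lemma~\ref{templatetech} to the templates of $\cT_0$. If some $\Phi\in\cT_0$ satisfies outcome~(\ref{m2}), then $\Gamma=\Gamma_\Phi=\bF^*$ and $\sqpinch{t}\subseteq\ol{\cM(\Phi)}\subseteq\cM$, giving outcome~(\ref{ma1}); if some $\Phi\in\cT_0$ satisfies outcome~(\ref{m3}), then $\Gamma=\Gamma_\Phi\ne\bF^*$ and $\xpinch{t}\subseteq\cM$ for the associated $x$, giving outcome~(\ref{ma2}). Otherwise every $\Phi\in\cT_0$ satisfies outcome~(\ref{m0}) or~(\ref{m1}), and I claim $h_{\cM}(n)=f_{p,|\Gamma|,t}(n)$ for all large $n$. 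Indeed, if not, Theorem~\ref{connreduction} applied to the polynomial $f_{p,|\Gamma|,t}$ itself gives a simple vertically $k$-connected $M\in\cM$ of large rank with $\elem(M)>f_{p,|\Gamma|,t}(r(M))$, lying in some $\cM(\Phi)$; a template of $\bT\setminus\cT_0$ or a dual template has density strictly below $f_{p,|\Gamma|,t}(r(M))$ at large rank, a $\Phi\in\cT_0$ of type~(\ref{m0}) is excluded because $M$ is vertically $(c(\Phi)+1)$-connected of rank $\ge c(\Phi)+1$, and a $\Phi\in\cT_0$ of type~(\ref{m1}) has $\cM(\Phi)\subseteq\cG(\Gamma)^t$ so $\elem(M)\le h_{\cG(\Gamma)^t}(r(M))=f_{p,|\Gamma|,t}(r(M))$ -- a contradiction in every case. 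So $\alpha=0$ works. Finally, Lemma~\ref{equalityhc} shows that for large $n$ any extremal $M\in\cM$ of rank $n$ is vertically $k$-connected, hence lies in some $\cM(\Phi)$; by the same case analysis $\Phi$ is a type-(\ref{m1}) member of $\cT_0$, so the simple matroid $M$ is a rank-$n$ extremal matroid of $\cG(\Gamma)^t$ and therefore $M\cong\DG(n,\Gamma)^t$, giving outcome~(\ref{ma0}).

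The main obstacle is the bookkeeping in the upper bound and the equality case: one must first isolate the unique pair $(\Gamma,t)$ maximizing the leading coefficient, and then rule out every competing contribution to $\cM$ -- templates of strictly smaller density, templates describing only bounded-rank highly connected matroids (outcome~(\ref{m0}) of Lemma~\ref{templatetech}), and the dual templates -- as being unable to inflate $h_{\cM}$ beyond $f_{p,|\Gamma|,t}(n)+O(n)$ in general, or beyond $f_{p,|\Gamma|,t}(n)$ exactly in the non-pinched case. This rests on using Theorem~\ref{connreduction} to reduce to the vertically highly connected regime where the structure theorem applies, and on Lemma~\ref{equalityhc} together with the uniqueness of $\DG(n,\Gamma)^t$ in $\cG(\Gamma)^t$ for the extremal characterization.
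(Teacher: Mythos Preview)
Your proposal is correct and follows essentially the same route as the paper: apply the structure theorem in reduced form, use Lemmas~\ref{templatedensity} and~\ref{dualdensity} for density bounds, Theorem~\ref{connreduction} to pass to the highly connected regime, Lemma~\ref{subclass} for the lower bound, Lemma~\ref{templatetech} for the trichotomy, and Lemma~\ref{equalityhc} plus uniqueness of $\DG(n,\Gamma)^t$ for the extremal characterisation.

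The only organisational difference is in how $(\Gamma,t)$ is selected. The paper chooses it intrinsically as the pair maximising $p^t|\Gamma|$ subject to $\cG(\Gamma)^t\subseteq\cM$, and then uses a single claim to show that any sufficiently dense highly connected matroid conforms to a template with exactly these parameters; Lemma~\ref{templatetech} is then invoked for that one template. You instead take $(\Gamma,t)$ as the maximiser over the templates in $\bT$, define $\cT_0$ as the set of templates realising that maximum, and apply Lemma~\ref{templatetech} to every template in $\cT_0$ at once before the case split. Both setups yield the same pair (your choice satisfies $\cG(\Gamma)^t\subseteq\cM$ via Lemma~\ref{subclass}, and the upper bound forces it to be the paper's maximiser), and your version arguably makes the final case analysis cleaner, since the templates outside $\cT_0$, the type-(\ref{m0}) templates, and the dual templates are disposed of uniformly. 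A couple of phrasings could be tightened --- ``a genuine quadratic lying just above $g(n)+\alpha n$'' can simply be $f_{p,|\Gamma|,t}(x)+\alpha x$, and the ``dual constants'' need not enter $\alpha$ since the dual bound is linear --- but these are cosmetic.
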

	\begin{proof}		
		For each integer $s$, let $\cM_s$ denote the class of vertically $s$-connected matroids in $\cM$ of rank at least $s$. Since $\cM$ is a quadratic class, it contains all graphic matroids, so $\cM_s \ne \varnothing$ for all $s \in\nni$. 
		By Hypothesis~\ref{structure1} and Corollary~\ref{strongstructure} there are finite sets $\bT$ and $\bT^*$ of reduced frame templates and integers $k$ and $m$ such that each simple matroid $M \in \cM_k$ with an $M(K_m)$-minor is either in $\cM(\Phi)$ for some $\Phi \in \bT$ or in $\cM^*(\Psi)$ for some $\Psi \in \bT^*$, while $\cM$ contains $\cM(\Phi)$ for all $\Phi \in \bT$ and $\cM^*(\Psi)$ for all $\Psi \in \bT^*$. 
Since $\cM_{\max(k,m)} \ne \varnothing$ we know that $\bT \cup \bT^* \ne \varnothing$. 
By Theorem \ref{grt}, there is an integer $\alpha_m$ so that each matroid $M$ with no $M(K_m)$-minor satisfies $\elem(M)\le \alpha_m\cdot r(M)$.

		Note that the class $\cG(\{1\})^0$ of graphic matroids is contained in $\cM$. If $\cG(\{1\})^t \subseteq \cM$ for all $t \in \nni$ then $\cM$ contains all projective geometries over $\bF$ and is thus not a quadratic class; let $t \in \nni$ and $\Gamma \le \bF^*$ be such that $\cG(\Gamma)^t \subseteq \cM$ and $p^t|\Gamma|$ is as large as possible. Let $c = \max_{\Phi \in \bT \cup \bT^*}c(\Phi)$. Note that the function $f_{p,g,t'}(x)$ is quadratic in $x$ with leading term $\tfrac{1}{2}gp^tx^2$; let $n_0$ be an integer for which every $x \ge n_0$ satisfies $\max(p^c(3x + 6c + 1), \alpha_mx) < f_{p,|\Gamma|,t}(x)-x$, and let $n_1$ be an integer so that every $x \ge n_1$ satisfies $f_{p,g,t'}(x) + 2p^{t'+1}cx < f_{p,|\Gamma|,t}(x)-x$ for all $t',g \in \nni$ such that $p^{t'}g < p^t|\Gamma|$.
		Let $k_0 = \max(t,k,c+1,n_0,n_1)$.
		
		\begin{claim}
			If $M \in \cM_{k_0}$ is simple and $\elem(M) \ge f_{p,|\Gamma|,t}(r(M))-r(M)$, then $M$ conforms to a template $\Phi' = (\Gamma',C',X',Y_0',Y_1',A_1',\Delta',\Lambda') \in \bT$ for which $\Gamma' = \Gamma$ and $\dim(\Lambda') = t$. 
		\end{claim}
		\begin{proof}[Proof of claim:]
			Let $r = r(M)$. Since $M \in \cM_{k_0}$ and $k_0\ge n_0$, the matroid $M$ has an $M(K_m)$-minor. Then since $M \in \cM_{k_0}$ and $k_0 \ge k$, we have $M \in \cM^*(\Psi)$ for some $\Psi \in \bT^*$ or $M \in \cM(\Phi)$ for some $\Phi \in \bT$. In the former case, since $r \ge k_0 \ge n_0$ we have  $|M| < p^c(3r + 6c + 1) < f_{p,|\Gamma|,t}(r)-r$ by Lemma~\ref{dualdensity}, a contradiction. Therefore $M \in \cM(\Phi')$ for some $\Phi' \in \bT$; let $\Phi' = (\Gamma',C',X',Y_0',Y_1',A_1',\Delta',\Lambda')$ and $t' = \dim(\Lambda')$. 
			
			Lemma~\ref{subclass} implies that $\cG(\Gamma')^{t'} \subseteq \ol{\cM(\Phi')} \subseteq \cM$, so $p^{t'}|\Gamma'| \le p^t|\Gamma|$ by our choice of $\Gamma$ and $t$. Using Lemma~\ref{templatedensity}, we have 
			\[f_{p,|\Gamma|,t}(r)-r < |M| < f_{p,|\Gamma'|,t'}(r) + p^{t'+1}c(r+c) < f_{p',|\Gamma'|,t}(r) + 2p^{t'+1}cr .\]
			If $p^{t'}|\Gamma'| < p^t|\Gamma|$ then the above and the fact that $r \ge k_0 \ge n_1$ give a contradiction. Thus $p^{t'}|\Gamma'| = p^t|\Gamma|$, which implies that $t' = t$ and $|\Gamma'| = |\Gamma|$; since $\bF^p$ is cyclic this gives $\Gamma' = \Gamma$. 		
		\end{proof}

		Since $\cG(\Gamma)^t \subseteq \cM$, we have $h_{\cM}(n) \ge f_{p,|\Gamma|,t}(n)$ for all $n \ge t$. Let $d \in \{-1,0,1, \dotsc, 2p^{t+1}c\}$ be maximal such that $h_{\cM}(n) > f_{p,|\Gamma|,t}(n) + dn$ for infinitely many $n$. By Theorem~\ref{connreduction}, applied with $f(x) = f_{p,|\Gamma|,t}(x) + dx$, there is a simple rank-$r$ matroid $M \in \cM_{k_0}$ for which $r\ge k_0$ and $|M| > f_{p,|\Gamma|,t}(r) + d r$. 
		
		By the claim, there is a template $\Phi' = (\Gamma,C',X',Y_0',Y_1',A_1',\Delta',\Lambda') \in \bT$ for which $M \in \cM(\Phi)$ while $\dim(\Lambda) = t$. By Lemma~\ref{templatedensity} we have $|M| < f_{p,|\Gamma|,t}(r) + 2p^{t+1}cr$, from which we see that $d < 2p^{t+1}c$; by the maximality of $d$ this gives the required upper bound on $h_{\cM}(n)$ with $\alpha = 2p^{t+1}c$. 
				
		We now apply Theorem~\ref{templatetech} to $\Phi'$. Since $M \in \cM(\Phi')$ is vertically $(c+1)$-connected and has rank at least $c+1$, we know that \ref{templatetech}(\ref{m0}) does not hold. Outcomes \ref{templatetech}(\ref{m2}) and \ref{templatetech}(\ref{m3}) imply (\ref{ma1}) and (\ref{ma2}) respectively, so we may assume that  \ref{templatetech}(\ref{m1}) holds. Thus $M \in \cG(\Gamma)^t$ and so $\elem(M) \le f_{p,|\Gamma|,t}(r)$; if $d \ne -1$ this is a contradiction, so $d = -1$. By the choice of $d$ it follows that $h_{\cM}(n) = f_{p,|\Gamma|,t}(n)$ for all sufficiently large $n$. 
		
		By Lemma~\ref{equalityhc}, there is some $k_1 \in \nni$ such that every simple $N \in \cM$ with $|N| = f_{p,|\Gamma|,t}(r(N))$ and $r(N) \ge k_1$ is in $\cM_{k_0}$. Consider such an $N$; by the claim there is a template $\Phi'' = (\Gamma,C'',X'',Y_0'',Y_1'',A_1'',\Delta'',\Lambda'')$ in $\bT$ with $\dim(\Lambda'') = t$ and $N \in \cM(\Phi'')$. Again, we may assume that outcomes (\ref{m0}),(\ref{m2}),(\ref{m3}) of Theorem~\ref{templatetech} do not hold for $\Phi''$ and so (\ref{m1}) does; thus $N \in \cG(\Gamma)^t$. Since $\DG(r(N),\Gamma)^t$ is the unique simple rank-$r(N)$ matroid with $f_{p,|\Gamma|,t}(n) = |N|$ elements, we must have $N \cong \DG(r(N),\Gamma)^t$ as required.  
	\end{proof}


	Finally, we prove the corollary that will give the extremal function for excluding geometries. Note that the class $\cM$ in the theorem statement is quadratic as it contains the class $\cG(\{1\})^0$ of graphic matroids. 

	\begin{theorem}\label{excludeN}
		Suppose Hypothesis \ref{structure1} holds. Let $\bF = \GF(p)$ be a prime field and $N$ be a nongraphic $\bF$-represented matroid. Let $\cM$ be the class of $\bF$-represented matroids with no $N$-minor. Let $\Gamma \le \bF^*$ and $t \in \nni$ be such that $p^t|\Gamma|$ is as large as possible subject to $N \notin \cG(\Gamma)^t$. If \begin{itemize}
			\item $\Gamma = \bF^*$ and $N \in \sqpinch{t}$ or
			\item $\Gamma \ne \bF^*$ and $N \in \xpinch{t}$ for all $x \in \bF^* - \Gamma$, 
		\end{itemize}
		then for all sufficiently large $n$ we have $h_{\cM}(n) = f_{p,|\Gamma|,t}(n)$ and every rank-$n$ extremal matroid in $\cM$ is isomorphic to $\DG(n,\Gamma)^t$. 
	\end{theorem}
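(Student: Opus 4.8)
The plan is to deduce Theorem~\ref{excludeN} from Theorem~\ref{bigmain} by matching up parameters. First I would record the elementary facts. The class $\cM$ is minor-closed, since having no $N$-minor is preserved under taking minors and under isomorphism, and it is quadratic, since $N$ is nongraphic and hence every graphic matroid lies in $\cM$, so $\cG(\{1\})^0 \subseteq \cM$. Next, $\cG(\Gamma)^t$ is minor-closed by Lemma~\ref{findpg}, so from $N \notin \cG(\Gamma)^t$ it follows that no member of $\cG(\Gamma)^t$ has an $N$-minor; hence $\cG(\Gamma)^t \subseteq \cM$ and $h_{\cM}(n) \ge h_{\cG(\Gamma)^t}(n) = f_{p,|\Gamma|,t}(n)$ for all $n \ge t$. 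Finally, $N$ is a minor of itself, so $N \notin \cM$.

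Now I apply Theorem~\ref{bigmain} to $\cM$, obtaining $\Gamma_0 \le \bF^*$ and $t_0, \alpha \in \nni$ with $\cG(\Gamma_0)^{t_0} \subseteq \cM$, with $f_{p,|\Gamma_0|,t_0}(n) \le h_{\cM}(n) \le f_{p,|\Gamma_0|,t_0}(n) + \alpha n$ for all sufficiently large $n$, and with one of the outcomes (a), (b), (c) holding. The central step is to show $(\Gamma_0, t_0) = (\Gamma, t)$. Since $\cG(\Gamma_0)^{t_0} \subseteq \cM$ and $N \notin \cM$, we have $N \notin \cG(\Gamma_0)^{t_0}$, so maximality of $p^t|\Gamma|$ in the statement gives $p^{t_0}|\Gamma_0| \le p^t|\Gamma|$. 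For the reverse inequality I combine $h_{\cM}(n) \ge f_{p,|\Gamma|,t}(n)$ with $h_{\cM}(n) \le f_{p,|\Gamma_0|,t_0}(n) + \alpha n$ and compare leading coefficients of these quadratics in $n$ (the leading coefficient of $f_{p,g,s}$ is $\tfrac{1}{2} g p^s$, while $\alpha n$ is lower order), which yields $p^t|\Gamma| \le p^{t_0}|\Gamma_0|$. Hence $p^{t_0}|\Gamma_0| = p^t|\Gamma|$; since $|\Gamma|, |\Gamma_0| \le p-1 < p$, this forces $t_0 = t$ and $|\Gamma_0| = |\Gamma|$, and as $\bF^*$ is cyclic we get $\Gamma_0 = \Gamma$.

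With the parameters identified, I rule out outcomes (b) and (c) using the hypotheses on $N$. If (b) holds then $\Gamma = \bF^*$ and $\sqpinch{t} \subseteq \cM$; but by hypothesis $N \in \sqpinch{t}$, so $N \in \cM$, a contradiction. If (c) holds then $\Gamma \ne \bF^*$ and $\xpinch{t} \subseteq \cM$ for some $x \in \bF^* - \Gamma$; but by hypothesis $N \in \xpinch{t}$ for every such $x$, again giving $N \in \cM$, a contradiction. Therefore outcome (a) holds: $\alpha = 0$, so $h_{\cM}(n) = f_{p,|\Gamma|,t}(n)$ for all sufficiently large $n$, and every extremal matroid in $\cM$ of sufficiently large rank is isomorphic to $\DG(r(N),\Gamma)^t$, which is precisely the assertion of Theorem~\ref{excludeN}. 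The one place requiring genuine care is the parameter-matching of the second paragraph — ensuring that the maximality built into the statement of Theorem~\ref{excludeN} and the growth-rate bounds of Theorem~\ref{bigmain} single out the same pair $(\Gamma,t)$; everything else is a direct appeal to results already established.
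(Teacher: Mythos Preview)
Your proof is correct and follows essentially the same route as the paper's own proof: apply Theorem~\ref{bigmain}, match the parameters $(\Gamma_0,t_0)$ with $(\Gamma,t)$ by combining the maximality hypothesis with a leading-coefficient comparison, then eliminate outcomes (b) and (c) using the hypotheses on $N$. If anything, you are more explicit than the paper about the two inequalities needed to force $p^{t_0}|\Gamma_0| = p^t|\Gamma|$; the only cosmetic issue is the variable clash in your final sentence, where $N$ denotes both the excluded matroid and the extremal one.
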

	\begin{proof}
		Clearly $\cG(\Gamma)^t \subseteq \cM$. Let $\Gamma' \le \bF^*$ and $t',\alpha \in \nni$ be given by Theorem~\ref{bigmain} for $\cM$. If $|\Gamma'|p^{t'} < |\Gamma|p^t$ then, since $f_{p,|\Gamma'|,t'}(n) \approx \tfrac{1}{2}p^{t'}|\Gamma'|n^2$, we have $h_{\cM}(n) \le f_{p,|\Gamma'|,t'}(n) + \alpha n < f_{p,|\Gamma|,t}(n) \le h_{\cM}(n)$ for all large $n$, a contradiction. So $|\Gamma'|p^{t'} = |\Gamma|p^t$; since $|\Gamma| < p$ and $\bF^*$ is cyclic this gives $\Gamma = \Gamma'$ and $t = t'$. By hypothesis, we see that \ref{bigmain}(\ref{ma1}) and \ref{bigmain}(\ref{ma2}) cannot hold for $\cM$, so we have \ref{bigmain}(\ref{ma0}) which implies the result. 
	\end{proof}

	Theorems~\ref{maintwo}, ~\ref{mainthree} and ~\ref{mainodd} follow from the above result and Lemmas~\ref{techtwo}, ~\ref{techthree} and ~\ref{techodd}, respectively.

	\section*{References}
	\newcounter{refs}
	\begin{footnotesize}
	\begin{list}{[\arabic{refs}]}
		{\usecounter{refs}\setlength{\leftmargin}{10mm}\setlength{\itemsep}{0mm}}
		
		\item\label{cgovz}
		B. Clark, K. Grace, J. Oxley, S.H.M. van Zwam,
		On the highly connected dyadic, near-regular, and sixth-root-of-unity matroids, 
		arXiv:1903.04910 [math.CO].
		
		\item\label{ggwstructure}
		J. Geelen, B. Gerards, G. Whittle, 
		The highly connected matroids in minor-closed classes, 
		Ann. Comb. 19 (2015), 107--123.
		
		\item \label{gkw09}
		J. Geelen, J.P.S. Kung, G. Whittle, 
		Growth rates of minor-closed classes of matroids, 
		J. Combin. Theory. Ser. B 99 (2009) 420--427.
		
		\item\label{grace}
		K. Grace, 
		The templates for some classes of quaternary matroids, 
		J. Combin. Theory. Ser. B 146 (2021) 286--363.
				
		\item\label{gvz}
		K. Grace, S.H.M. van Zwam, 
		Templates for binary matroids, 
		SIAM J. Discrete Math. 31 (2017), 254--282.	
		
		\item\label{gvzcounterexample}
		K. Grace, S.H.M. van Zwam, 
		On perturbations of highly connected dyadic matroids, 
		Ann. of Comb. 22 (2018), 513--542.
				
		\item\label{heller}
		I. Heller, 
		On linear systems with integral valued solutions,
		Pacific. J. Math. 7 (1957) 1351--1364.
		
		\item\label{kmpr}
		J.P.S. Kung, D. Mayhew, I. Pivotto, G.F. Royle,
		Maximum size binary matroids with no $\AG(3,2)$-minor are graphic,
		SIAM J. Discrete Math. 28 (2014), 1559--1577.
		
		\item\label{gn}
		J. Geelen, P. Nelson, 
		Matroids denser than a clique, 
		J. Combin. theory. Ser. B 114 (2015), 51--69.
		
		\item \label{oxley}
		J. G. Oxley, 
		Matroid Theory (2nd edition),
		Oxford University Press, New York, 2011.
		
		\item\label{walsh}
		Z. Walsh, 
		On the density of binary matroids without a given minor, 
		MMath Thesis, University of Waterloo, 2016. 
		
		\item\label{zaslav}
		T. Zaslavsky, Signed graphs, 
		Discrete Appl. Math. 4 (1982) 47--74
		
	\end{list}	
\end{footnotesize}

\end{document}